\newtheorem{theorem}{Theorem}[section]
\newtheorem{lemma}[theorem]{Lemma}
\newtheorem{proposition}[theorem]{Proposition}
\newtheorem{corollary}[theorem]{Corollary}
\theoremstyle{remark}
\newtheorem{remark}[theorem]{Remark}
\newtheorem{definition}[theorem]{Definition}
\numberwithin{equation}{section}
\newcommand{\Mbar}{\overline{\mathcal{M}}}
\newcommand{\op}{\operatorname}
\newcommand{\llangle}{\langle \! \langle}
\newcommand{\rrangle}{\rangle \! \rangle}
\newcommand{\A}{\mathscr{A}}
\begin{document}
	
\title[Virasoro constraints with weighted pointed curves]{Virasoro constraints for moduli of weighted pointed stable curves}% and maps}
	
	\author[Y.-C~Chou.]{You-Cheng Chou}
	\email{chou@math.utah.edu}
	
	\author[Y.-P.~Lee]{Yuan-Pin~Lee}
	\email{yplee.math@gmail.com, yplee@math.utah.edu}
	
\address{Institute of Mathematics, Academia Sinica, Taipei 10617, Taiwan, and
Department of Mathematics, University of Utah, 	Salt Lake City, Utah 84112-0090, U.S.A.}
	
	\date{\today}
	%%%%%%%%%%%%%%%%%%%%%%%%%%%%%%%%%%%%%%%
	
	\maketitle

	%\small
	%\tableofcontents
	%\normalsize

\begin{abstract}
We formulate Virasoro constraints for the generating functions of the intersection numbers on Hassett's moduli of weighted pointed curves and show that they are governed by the KdV integrable hierarchy. %albeit in a subtle way.
\end{abstract}

\section{Introduction}

In this paper, we extend the usual Virasoro constraints for the Witten--Kontsevich theory on the moduli of pointed stable curves  
to the case where the marked points are weighted, in the sense of B.~Hassett \cite{bH}.

The geometry linking the intersection numbers of $\psi$-classes on the Hassett and Deligne--Mumford moduli of pointed curves is well known.
In particular, there are natural morphisms from the moduli of Deligne--Mumford stable curves to moduli of weighted pointed stable curves,
and the pullbacks of the $\psi$-classes of the latter can be written as the corresponding $\psi$-classes on the former plus boundaries.
We refer the readers to the recent preprint by V.~Blankers and R.~Cavalieri \cite{BC} for an excellent review of background material.
Among other things, \cite{BC} formulates the underlying combinatorics in a nice way.
For the purpose of this paper their results are not needed.
Rather, we rework the underlying combinatorics in Section~\ref{s:1} in order to formulate our main combinatorial gadget: the $h$ function in Section~\ref{s:2}.

The main results of the paper are contained in Section~\ref{s:2}.
Let $\A$ be an additively closed set of weights, as defined in Section~\ref{s:1.2}.
One can define %the phase space $\mathbf{H}^{\A}$ and 
the generating function $F^{\A} (t)$ of all intersection numbers of $\psi$-classes on the weighted pointed moduli of curves,  defined in \eqref{e:2}.
We show that for any weight $a \in \A$, there is a set of partial differential operators $V_a: = \{ L_{k;a}^{\A} \}_{k \geq -1}$ which satisfy the (centerless) Virasoro relations in Corollary~\ref{c:2.9} and annihilate the generating function 
\[
 L_{k;a}^{\A} (e^{F^{\A}}) =0, \quad \forall a \in \A, \, k \geq -1
\]
in Theorem~\ref{t:2.13}.
Furthermore, the Virasoro constraints uniquely determine $F^{\A}$ up to initial conditions
\[
\langle \tau_{1;a} \rangle = \frac{1}{24}, \quad \langle \tau_{0;a_1}\tau_{0;a_2}\tau_{0;a_3} \rangle = 1
\quad\mbox{ for any $a, a_1, a_2, a_3 \in \A$ }.
\]
These are explained in Section~\ref{s:2}, with certain technical combinatorial details verified in Section~\ref{s:3}.
The main point of the proofs is a systematic reduction to the weight $1$ case by combinatorics.

The vector space $V_{\A}$ generated by operators $ \{ L_{k;a}^{\A} \}_{k \geq -1, a\in\A}$ is shown to be a semi-direct product of the usual (centerless) Virasoro Lie algebra with an abelian Lie algebra in Section~\ref{s:2.4}.
Results from representation theory \emph{imply} that the ``nice representations'' (e.g., highest weight representations) of such Lie algebras are all induced from those of the Virasoro algebra \cite{CK}.
Given the relationship between representations of the Virasoro algebra and integrable systems, this in turn \emph{suggests} that the corresponding integrable system are the same KdV hierarchy  for any $\A$.
In Section~\ref{s:KdV}, we show this is indeed the case by an explicit change of variables.
Furthermore, the explicit formulae of the infinite commuting flows and the initial condition of the integrable hierarchy are found.
Extensions to this work to include target spaces as well as quasimap spaces will be discussed in a subsequent work.

\subsection*{Acknowledgements}
We would like to thank Renzo Cavalieri, Shun-Jen Cheng,  Honglu Fan, and Youjin Zhang for helpful conversations and correspondences.
Cavalieri read an early draft and gave many valuable comments.
Part of the work was done during the first author's visit to Academia Sinica.
Both authors are supported in part by the Academia Sinica and the %National Science Foundation, while
the second author is also supported in part by the Simons Foundation.

\section{Moduli of weighted pointed curves and correlators} \label{s:1}
%\begin{itemize}
%\item 

\subsection{Correlators with weighted points}
Let $\overline{ \mathcal{M} }_{ g , a_1, \ldots,  a_n }$ be the moduli space of stable genus $g$ curves, with ordered $n$ marked points with weights 
\[
 \underline{a} = (a_1, a_2, \dots, a_n)
\] 
with $a_i \in [0^+, 1]$ \cite{bH}.
The ``unweighted'' moduli space $\overline{ \mathcal{M} }_{g,n}$ in the sense of Deligne--Mumford is denoted here as $\overline{ \mathcal{M} }_{g, 1, 1, \dots, 1 }$. 
We note that the (infinitesimal) weight $0^+$ is allowed with the standard arithmetic properties:
\begin{itemize}
\item $\forall a \in (0,1], \, 0^+ < a$.
\item Let $a^+ := a + 0^+$, $0^+ + 0^+ = 0^+,  a + b^+ = a^+ + b^+ = (a+b)^+$.
\end{itemize}
By the result of B.~Hassett \cite{bH}, $\Mbar_{ g , a_1, \ldots , a_n }$ are smooth, irreducible Deligne--Mumford stacks;
there are canonical surjective birational morphism $\pi: \Mbar_{g,n} \to  \Mbar_{ g , a_1,\ldots, a_n }$.

Per usual, we define the correlators with weighted points as
\[
\langle \tau_{\underline{k}; \, \underline{a}} \rangle_g := \langle \tau_{ k_1 ; a_1 } \dots \tau_{ k_n ; a_n } \rangle_{g}  :=
\int_{\overline{ \mathcal{M} }_{ g , a_1, \dots, a_n } } \psi_1^{k_1} \cdots \psi_n^{k_n}.
\]
For dimensional reasons, the above correlators vanish unless 
\begin{equation} \label{e:g}
 3g -3 +n = \sum_{i=1}^n k_i.
\end{equation}
When $g$ is omitted from the notation, it is determined by the above equation uniquely.
(The correlators vanish if $g$ is fractional.)
The following convention is adapted throughout the paper
\[
 \tau_k := \tau_{k;1}.
\]
The correlators with arbitrary weights can be related to those with ``normal'' weights $a_i=1, \, \forall i$.
Let us introduce some notation.
Let %$\underline{k} \in \mathbb{Z}_{\geq 0}^n$ and 
$\underline{a} =(a_1, a_2, \ldots, a_n)$ be an ordered list of weights.
A \emph{partition} of the components of $\underline{a}$ is a partition of the set $\{ a_1, \ldots, a_n \}$, i.e.,
\[
 p(\underline{a}) = (a_{11}  a_{12} \ldots a_{1l_1}) (a_{21} a_{22} \ldots a_{2 l_2} ) \ldots ( a_{\rho 1} \ldots a_{\rho l_{\rho}} ),
\]
 such that 
\[
 n= \sum_{i=1}^{\rho} l_i , \qquad  
  \{ a_{11} , a_{12}, \ldots , a_{\rho 1} , \ldots , a_{\rho l_{\rho}}  \} = \{ a_1, \ldots, a_n \}
\] 
and is subject to the following equivalence relations:
\begin{itemize}
\item permutations of $a_{ij}$ within the $i$-th part (parenthesis) are considered the same element;
\item permutations of the $i$-th and the $j$-th parts are considered the same element.
\end{itemize}
A partition is called \emph{admissible} if for \emph{all} parts 
\[
 \sum_{j=1}^{l_i} a_{i j} \leq 1, \quad \forall i. 
\]
%(It is called totally unstable partitions in \cite{BC}.)

We introduce the following notation.
Let $\underline{k}$ and $ \underline{a}$ be two $n$-tuples.
An \emph{admissible partition} $p (\underline{k}; \underline{a})$ is an admissible partition of $\underline{a}$ such that
$\underline{k}$ just ``goes along for the ride''. That is, 
\[
 p (\underline{k}; \underline{a}) =  ( [ k_{11} a_{11} ]   \ldots [ k_{1 l_1}  a_{1 l_1}] ) \ldots ( [k_{\rho 1} a_{\rho 1} ] \ldots  [ k_{\rho l_{\rho}} a_{\rho l_{\rho}} ] )
\]
such that, $\forall i , j$, $[k_{ij} a_{ij}] = [k_{i'} a_{i'} ]$ for some $i'$ (the same index). 
Denote by $\mathcal{P} ( \underline{k}; \underline{a})$ the \emph{set of all admissible partitions} $p (\underline{k}; \underline{a})$.
Define
\[
 \langle \tau_{p(\underline{k}; \, \underline{a})} \rangle := \langle \tau_{p(\underline{k} ) } \rangle := \langle \tau_{ 1+ \sum_{j=1}^{l_1} (k_{1j} -1) } \ldots \tau_{ 1+ \sum_{j=1}^{l_{\rho}} (k_{\rho j} -1)  } \rangle .
\]
We note that on the right hand side of the above equation all marked points have \emph{weights $1$}.

For example, let $n=8$. Let $p(\underline{a}) = (a_1 a_7) ( a_2 a_3 a_6) ( a_4 a_5 a_8)$ be an admissible partition and 
let $p (\underline{k}; \underline{a} )$ the the corresponding admissible parition of the pair. Then
\[
 \langle   \tau_{ p ( \underline{k}; \, \underline{e} ) }  \rangle =
   \langle  \tau_{k_1 + k_7-1} \tau_{k_2 + k_3 + k_6 - 2 } \tau_{k_4 + k_5 + k_8 - 2} \rangle.
\]

Let $p \in \mathcal{P}(\underline{k}; \underline{a})$, and let $\rho (p)$ be the length (number of parts) of $p$. Define 
\[
\op{codim}(p) := n- \rho( p ).
\]
The following proposition, first apeared in \cite{AG}, relates the correlators with arbitrary weights to those with weights one.
Indeed, it was this proposition which motivates the above definitions.

\begin{proposition}[\cite{AG}] \label{p:1}
All weighted correlators can be reconstructed from the unweighted ones. More precisely,
\begin{equation} \label{e:1}
\langle \tau_{\underline{k}; \, \underline{a}} \rangle 
= \sum_{  p \in \mathcal{P} ( \underline{k}; \underline{a} )  } 
(-1)^{ \op{codim} (p) } \langle  \tau_{ p ( \underline{k} ) } \rangle 
\end{equation}
where the RHS is a sum of correlators of weight $1$.
We note that by convention $\langle  \tau_{l_1} \dots \tau_{l_n} \rangle = 0$, if $l_i < 0$ for some $i$.
\end{proposition}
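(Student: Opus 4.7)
The plan is to use the birational comparison morphism $\pi: \Mbar_{g,n} \to \Mbar_{g,a_1,\dots,a_n}$ and the projection formula to rewrite the weighted correlator as an integral on $\Mbar_{g,n}$, then expand the result using the known relation between $\pi^*\bar\psi_i$ and $\psi_i$ and match the surviving terms with admissible partitions.

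The first step is to record the pullback formula for $\psi$-classes under $\pi$, going back to Hassett \cite{bH}: one has
\[
\pi^* \bar\psi_i = \psi_i - \sum_{S \ni i} \delta_{0,S},
\]
where $S$ runs over admissible clusters (subsets of $\{1,\dots,n\}$ with $|S|\geq 2$ and $\sum_{j\in S} a_j \leq 1$) that contain $i$, and $\delta_{0,S}$ is the boundary divisor whose generic point is a curve with a genus-zero tail carrying exactly the marked points indexed by $S$. By the projection formula and $\pi_* 1 = 1$, the weighted correlator becomes
\[
\la \tau_{\underline{k};\underline{a}}\ra \;=\; \int_{\Mbar_{g,n}} \prod_{i=1}^n \Bigl(\psi_i - \sum_{S\ni i}\delta_{0,S}\Bigr)^{k_i}.
\]

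The second step is to expand this product and organize the contributions. Two boundary divisors $\delta_{0,S}, \delta_{0,S'}$ with $S\cap S' \neq \emptyset$ but neither $S \subseteq S'$ nor $S' \subseteq S$ cannot appear simultaneously in the dual graph of a single stable curve, so the configurations that survive cleanly are those in which each marked point is assigned either to the main component or to a unique cluster shared with the other marked points of its part. The partition of $\{1,\dots,n\}$ so induced is forced to be admissible, providing a natural indexing of the contributions by $\mathcal{P}(\underline{k};\underline{a})$.

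For a fixed admissible partition $p$ with parts $S_1,\dots,S_\rho$, each marked point in a non-singleton part $S_i$ must contribute one factor of $-\delta_{0,S_i}$ from its $(\psi_i - \sum \delta)^{k_i}$ expansion; the total number of such contributions is $n-\rho(p)=\op{codim}(p)$, producing the sign $(-1)^{\op{codim}(p)}$. On the boundary stratum $\Mbar_{g,\rho} \times \prod_{|S_i|\geq 2}\Mbar_{0,|S_i|+1}$, the remaining powers of $\psi$-classes reduce, using the self-intersection formula $\delta_{0,S}\vert_{\delta_{0,S}} = -\psi_{\mathrm{node,main}} - \psi_{\mathrm{node,bubble}}$ and the standard top intersections on $\Mbar_{0,l+1}$ (all multinomial coefficients), to a single $\psi$-class of index $1+\sum_{j\in S_i}(k_j-1)$ at the attaching point on the main component. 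Assembling the contributions over all $p$ yields precisely the right-hand side of \eqref{e:1}.

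The main technical obstacle is the last reduction: one must verify that, after integrating out each rational bubble, the various multinomial coefficients from the binomial expansions and the iterated self-intersection formula conspire to give a single $\tau$-class with coefficient exactly one and with the claimed index $1+\sum(k_j-1)$. This is a short but delicate computation on $\Mbar_{0,l+1}$; the combinatorial bookkeeping for nested configurations (contracted bubbles sitting on contracted bubbles) also requires some care, though the admissibility constraint $\sum_{j\in S}a_j\leq 1$ at each level keeps the combinatorics manageable. The negative-index vanishing convention $\la \tau_{l_1}\cdots\tau_{l_n}\ra =0$ for $l_i<0$ then absorbs the terms where a cluster $S_i$ carries too many $\psi$-exponents for $\Mbar_{0,|S_i|+1}$ to support.
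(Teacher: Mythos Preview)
Your strategy---pull back to $\Mbar_{g,n}$ and expand $\prod_i(\psi_i-\sum_{S\ni i}\delta_{0,S})^{k_i}$ directly---is a legitimate alternative route, closer in spirit to the original argument in \cite{AG}. The paper does something different: it raises the weights to $1$ one marking at a time. A single wall-crossing (Lemma~\ref{l:2}) involves only \emph{one} boundary divisor $\Delta$, and iterating this (Lemma~\ref{l:4}) together with induction on the number of markings of weight $<1$ gives the formula without ever confronting products of distinct $\delta_{0,S}$. What your approach buys is a one-shot formula; what the paper's buys is that every step is a computation with a single divisor.

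That said, your sketch has a genuine gap precisely at the point you flag. After the binomial expansion you will meet monomials such as $\delta_{0,S}^{\,a}\delta_{0,S'}^{\,b}$ with $S\subsetneq S'$ both admissible (e.g.\ $S=\{1,2\}\subsetneq\{1,2,3\}=S'$ when $a_1=a_2=a_3=1/3$); these live on strata with \emph{chains} of contracted bubbles and do not correspond to any single $p\in\mathcal{P}(\underline{a})$. Showing that the sum over all such nested forests collapses to the sum over flat partitions is the heart of the matter, and ``some care'' understates it: one needs an inclusion--exclusion over forests or an iterated self-intersection argument along the chain, and the cancellation is not visible from what you wrote. Relatedly, your sign mechanism is not right. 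It is not the case that ``each marked point in a non-singleton part $S_i$ contributes one factor of $-\delta_{0,S_i}$'': that would give $\sum_{|S_i|\ge 2}|S_i|$ minus signs, not $n-\rho(p)=\sum_{|S_i|\ge 2}(|S_i|-1)$. Already for a single two-element part $S=\{1,2\}$, the only surviving term carries $k_1+k_2$ factors of $-\delta_{0,S}$, and the sign $(-1)^{\op{codim}(p)}=-1$ arises as $(-1)^{k_1+k_2}\cdot(-1)^{k_1+k_2-1}$ from the binomial expansion combined with the self-intersection formula $\delta|_\delta=-\psi'-\psi''$, not from counting markings. The paper's one-weight-at-a-time induction sidesteps both issues at the cost of the auxiliary Lemmas~\ref{l:1}--\ref{l:4}.
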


Since our formulation is slightly different from that of \cite{AG} and we need variations of results not strictly contained there,
a proof of this proposition will be given here, after the following lemmas.

\begin{lemma} \label{l:1}
\[
\sum_{i=1}^{M} \binom{M}{i} \binom{i-1}{n} (-1)^i = (-1)^{n+1}, \mbox{ for $n < M$ }.
\]
Here we use the convention: $\binom{m}{n}=0$ if $n>m$.
\end{lemma}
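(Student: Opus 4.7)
The plan is to prove the identity via a generating-function argument, which handles the dependence on $n$ uniformly. Set
\[
 f(M,n) := \sum_{i=1}^{M} \binom{M}{i}\binom{i-1}{n}(-1)^i,
\]
and introduce the formal power series $F_M(x) := \sum_{n \geq 0} f(M,n)\, x^n$.

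First, I would interchange the two summations. For each fixed $i \geq 1$, the binomial theorem gives $\sum_{n \geq 0}\binom{i-1}{n} x^n = (1+x)^{i-1}$, so
\[
 F_M(x) = \frac{1}{1+x}\sum_{i=1}^{M}\binom{M}{i}(-1)^i (1+x)^i = \frac{1}{1+x}\Bigl[\bigl(1 - (1+x)\bigr)^M - 1\Bigr] = \frac{(-x)^M - 1}{1+x},
\]
where in the middle step I added and subtracted the $i=0$ term (which equals $1$) and applied the binomial theorem.

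Now the conclusion follows by extracting the coefficient of $x^n$ under the hypothesis $n<M$. Since $(-x)^M$ contributes only to degrees $\geq M$, for $n<M$ the coefficient of $x^n$ in $F_M(x)$ agrees with the coefficient in $-1/(1+x) = \sum_{n\geq 0}(-1)^{n+1} x^n$, giving $f(M,n) = (-1)^{n+1}$.

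The proof is mostly mechanical; the one point that deserves a line of justification is the interchange of summations, which is legitimate because for each fixed $i$ only finitely many $n$ give $\binom{i-1}{n} \ne 0$, so this is really an identity of polynomials (or of formal power series) with no convergence subtlety. If one prefers an elementary alternative, the same conclusion follows by induction on $n$ using the Pascal recursion $\binom{i-1}{n} = \binom{i}{n} - \binom{i-1}{n-1}$ together with the vanishing $\sum_{i=0}^M \binom{M}{i}(-1)^i\binom{i}{n} = 0$ for $n<M$ (a consequence of $\binom{M}{i}\binom{i}{n} = \binom{M}{n}\binom{M-n}{i-n}$), with base case $f(M,0) = -1$ from $\sum_{i=0}^M \binom{M}{i}(-1)^i = 0$; this gives $f(M,n) = -f(M,n-1)$ and hence $f(M,n) = (-1)^{n+1}$.
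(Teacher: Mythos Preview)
Your proof is correct. Your main argument via the generating function $F_M(x)=\sum_{n\ge 0}f(M,n)x^n=\dfrac{(-x)^M-1}{1+x}$ is a genuinely different route from the paper's: the paper proceeds by induction on $n$, establishing the base case $n=0$ from $(1-1)^M=0$ and the inductive step from Pascal's identity $\binom{i-1}{n}+\binom{i-1}{n-1}=\binom{i}{n}$ together with the observation that $\sum_{i}\binom{M}{i}\binom{i}{n}(-1)^i$ is the coefficient of $x^n$ in $(1-(1+x))^M$, hence vanishes for $n<M$. Your generating-function computation packages both of these ingredients into a single closed-form identity and reads off all $n<M$ at once, which is slicker; the paper's induction is slightly more elementary but requires tracking the two cases separately. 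Your parenthetical ``alternative'' is essentially the paper's own argument (you justify the key vanishing via $\binom{M}{i}\binom{i}{n}=\binom{M}{n}\binom{M-n}{i-n}$ rather than as a coefficient of $(-x)^M$, but the structure is the same).
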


\begin{proof}
The above identity can be proved by the following two observations.
\begin{enumerate}
\item For $n=0$, we have
\[
\sum_{i=1}^M \binom{M}{i} (-1)^i = (1-1)^M -1 = -1
\]
\item For $M>n>0$, we have
\[
\sum_{i=1}^M \binom{M}{i} 
\left( 
\binom{i-1}{n} + \binom{i-1}{n-1} 
\right) (-1)^i
=
\sum_{i=1}^M \binom{M}{i} \binom{i}{n} (-1)^i =0,
\]
since it can be understood as the coefficient of $x^n$ of $( 1 - (1+x) )^M$.
\end{enumerate}
The proof of the Lemma follows from induction on $n$ by (1) and (2).
\end{proof}

\begin{lemma} \label{l:2}
Assume $| \underline{a}| := \sum_i a_i =1$, then
\[
 \langle \tau_{e_1;a_1} \dots \tau_{e_n;a_n} \rangle
=  \langle \tau_{e_1;a_1} \dots \tau_{e_{n-1}; a_{n-1}} \tau_{e_n; a_n^+} \rangle
+ (-1)^{n-1} \langle \tau_{ e_1+\dots + e_n -n+1 } \rangle .
\]

\end{lemma}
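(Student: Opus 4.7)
The plan is to apply Proposition \ref{p:1} to both correlators in the lemma and compare the resulting sums term by term. Writing $\underline{a}=(a_1,\ldots,a_n)$ and $\underline{a}'=(a_1,\ldots,a_{n-1},a_n^+)$, each side expands as a signed sum $\sum_{p}(-1)^{\op{codim}(p)}\langle\tau_{p(\underline{e})}\rangle$ over admissible partitions from $\mathcal{P}(\underline{e};\underline{a})$ or $\mathcal{P}(\underline{e};\underline{a}')$ respectively. Because the integer tuple $\underline{e}$ is untouched and $\langle\tau_{p(\underline{e})}\rangle$ depends only on the shape of $p$, the identity reduces to an analysis of the symmetric difference of these two sets of partitions.

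First I would check the easy inclusion $\mathcal{P}(\underline{e};\underline{a}')\subseteq\mathcal{P}(\underline{e};\underline{a})$: replacing $a_n^+$ by $a_n$ in the part containing the $n$-th marked point only decreases the sum of that part, and other parts are unaffected. The key step is then to describe $\mathcal{P}(\underline{e};\underline{a})\setminus\mathcal{P}(\underline{e};\underline{a}')$: these are exactly the admissible partitions $p$ of $\underline{a}$ whose part containing $a_n$ has sum equal to $1$, since bumping $a_n$ up then pushes that sum to $1^+>1$, violating admissibility.

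The combinatorial heart of the lemma is the observation that, under $|\underline{a}|=1$ with all $a_i>0$, the only such part is the whole set $\{a_1,\ldots,a_n\}$. Indeed, if the part containing $a_n$ had sum $1$ but excluded some $a_j$, then $|\underline{a}|\geq 1+a_j>1$, contradicting the hypothesis. Hence $\mathcal{P}(\underline{e};\underline{a})\setminus\mathcal{P}(\underline{e};\underline{a}')$ consists of the single one-block partition, with $\rho=1$, $\op{codim}=n-1$, and associated correlator $\langle\tau_{1+\sum_j(e_j-1)}\rangle = \langle\tau_{e_1+\cdots+e_n-n+1}\rangle$. Its contribution $(-1)^{n-1}\langle\tau_{e_1+\cdots+e_n-n+1}\rangle$ is precisely the additional term stated in the lemma.

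The only care needed is the arithmetic of the infinitesimal weight $0^+$: one must interpret the admissibility inequality ``$s+0^+\leq 1$'' as the strict inequality ``$s<1$'', using the conventions introduced just after the definition of $\Mbar_{g,a_1,\ldots,a_n}$. Once that is made precise, the argument is entirely combinatorial and uses nothing beyond Proposition \ref{p:1}; the main conceptual obstacle is merely recognising that the unique ``missing'' partition is exactly the maximal one, which is forced by the hypothesis $|\underline{a}|=1$.
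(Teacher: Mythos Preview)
Your combinatorial argument is correct on its own terms, and it is genuinely different from the paper's proof. The paper proves Lemma~\ref{l:2} \emph{geometrically}: it uses the reduction morphism $\pi:\Mbar_{g,a_1,\ldots,a_n^+}\to\Mbar_{g,a_1,\ldots,a_n}$, the comparison $\psi_i=\pi^*\psi_i+\Delta$ where $\Delta$ is the boundary divisor whose general point has all $n$ markings on a rational tail, the self-intersection formula $\Delta\cdot\Delta=(-\psi'-\psi'')\Delta$, and then the binomial identity of Lemma~\ref{l:1} to collapse the resulting expansion of $\prod_i(\pi^*\psi_i+\Delta)^{e_i}$. Your approach, by contrast, bypasses the geometry entirely: you take Proposition~\ref{p:1} as known and simply compare $\mathcal{P}(\underline{e};\underline{a})$ with $\mathcal{P}(\underline{e};\underline{a}')$, which is clean and transparent.

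There is, however, a logical dependency you should flag. In the paper's development, Lemma~\ref{l:2} is a building block \emph{toward} Proposition~\ref{p:1}: it feeds into Lemma~\ref{l:4}, which is the inductive step in the proof of Proposition~\ref{p:1}. Invoking Proposition~\ref{p:1} to prove Lemma~\ref{l:2} is therefore circular within the paper's internal logic. Your argument becomes legitimate only if you import Proposition~\ref{p:1} as an external result from \cite{AG}; in that case it is a quick alternative derivation, and Lemma~\ref{l:2} becomes a consistency check rather than a step in the chain. The paper's geometric proof, on the other hand, is self-contained and is what actually establishes this single wall-crossing from first principles.
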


\begin{proof}
Consider the birational morphism
\[
\pi: \overline{ \mathcal{M} }_{g, a_1,\dots, a_n^+}  \rightarrow \overline{ \mathcal{M} }_{g, a_1,\dots, a_n}.
\]
It is easy to see that 
\[
\psi_i = \pi^* \psi_i + \Delta
\]
where $\Delta$ is the divisor on $\Mbar_{g,a_1,\dots ,a_n^+}$ whose general elements consist curves with a single node separating a rational and a genus $g$ components, such that all $n$ marked points lie on the rational component.
Basic facts on deformations of nodal curves imply that
\[
 \Delta \cdot \Delta = (-\psi' -\psi'') \Delta, 
\]
where $\psi'$ and $\psi''$ are the $\psi$-class of the nodal point on the rational and genus $g$ components respectively.
We have
\[
\begin{split}
&\langle \tau_{e_1;a_1} \dots \tau_{e_{n-1}; a_{n-1}} \tau_{e_n; a_n^+} \rangle \\
= & \int_{  \overline{ \mathcal{M} }_{g, a_1,\dots, a_n^+}  }
( \pi^*\psi_1 + \Delta )^{e_1} \dots  ( \pi^* \psi_{n} + \Delta )^{e_n}
\\
= &
\langle \tau_{e_1;a_1} \dots \tau_{e_{n-1}; a_{n-1}} \tau_{e_n; a_n} \rangle \\
& \quad + \sum_{i=1}^{ |\underline{e}|} \binom{|\underline{e}|}{i} \binom{i-1}{n-2} (-1)^{i-1}
\int_{\overline{\mathcal{M}}_{0,n+1} } (\psi')^{n-2} \int_{\overline{\mathcal{M}}_{g,1}} (\psi'')^{| \underline{e} | -n+1}
\\
=&
\langle \tau_{e_1;a_1} \dots \tau_{e_{n-1}; a_{n-1}} \tau_{e_n; a_n} \rangle
+ (-1)^n  \langle \tau_{ e_1+\dots + e_n -n+1 } \rangle .
\end{split}
\]
The first equality used the comparison of the $\psi$ classes above.
For the second equality the following ingredients are used: the projection formula, the self-intersection formula of $\Delta$ above, 
the facts that $\dim \Mbar_{0,n+1} = n-2$ and
\[
\pi^* \psi_i |_{\Delta} = \psi''|_{\Delta} .
\]
For the third equality we refer to Lemma~\ref{l:1} and the fact
\[
 \int_{\Mbar_{0,n+1}} \psi_{n+1}^{n-2} =1.
\]
This concludes the proof.
\end{proof}

Let us introduce the following notations.
For any $J \subset  \{ 1,\ldots, n \}$, let 
$\underline{a}_J$ be the ordered sublist of $\underline{a}$, consisting of sub-indices in $J$ and
\[
  |{\underline{a}_J}| : = \sum_{j\in J}  a_j., \qquad \# (\underline{a}_J ) :=  \# (J) = \op{cardinality}(J).
\]

\begin{lemma}[Chambers and walls of weights] \label{l:3}
$\langle \tau_{\underline{e}; \, \underline{a}} \rangle = \langle \tau_{\underline{e}; \, \underline{a'}} \rangle$ if $\mathcal{P}(\underline{a}) = \mathcal{P}(\underline{a}')$.
In particular, if $| \underline{a}_J | \neq 1$ or $1^+$ for any $J \subset  \{ 1,\dots,n-1 \}$, then a small perturbation of weights 
$\underline{a}$ does not change $\langle \tau_{\underline{e}; \, \underline{a}} \rangle$.
\end{lemma}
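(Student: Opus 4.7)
The plan is to derive the lemma directly from Proposition~\ref{p:1}. For the main assertion, expand both sides of the claimed equality using \eqref{e:1}:
\[
\langle \tau_{\underline{e}; \, \underline{a}} \rangle
= \sum_{p \in \mathcal{P}(\underline{e}; \underline{a})} (-1)^{\op{codim}(p)} \langle \tau_{p(\underline{e})} \rangle.
\]
By construction, the index tuple $\underline{e}$ only ``rides along'' on the partition of the weight tuple, so there is a canonical bijection $\mathcal{P}(\underline{e};\underline{a}) \leftrightarrow \mathcal{P}(\underline{a})$; under this bijection $\op{codim}(p)$ and the weight-$1$ correlator $\langle \tau_{p(\underline{e})} \rangle$ depend only on the set-theoretic partition of $\{1,\dots,n\}$ and on $\underline{e}$, not on the specific weights. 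Therefore $\mathcal{P}(\underline{a}) = \mathcal{P}(\underline{a}')$ makes the two expansions identical term-by-term, proving the main statement.

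For the ``in particular'' assertion, my plan is a standard chambers-and-walls argument. The set $\mathcal{P}(\underline{a})$ is cut out by the closed conditions $|\underline{a}_{J_i}| \leq 1$ imposed on the parts of a set partition, so the map $\underline{a} \mapsto \mathcal{P}(\underline{a})$ is locally constant on the complement of the hyperplane arrangement $\bigcup_{J} \{|\underline{a}_J| = 1\}$, together with the analogous $\{|\underline{a}_J| = 1^+\}$ loci coming from the $0^+$ arithmetic. If $\underline{a}$ avoids every such wall, any sufficiently small perturbation remains in the same chamber, $\mathcal{P}(\underline{a})$ is unchanged, and the first part of the lemma concludes the proof.

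The main bookkeeping subtlety---and in my view the only nonroutine step---is reconciling the hypothesis, which only restricts $J \subset \{1,\dots,n-1\}$, with the a priori need to control walls for every $J \subset \{1,\dots,n\}$. I would handle this by singling out the index $n$: every set partition of $\{1,\dots,n\}$ has a unique part containing $n$, so walls $|\underline{a}_J| = 1$ with $n \in J$ convert to walls $|\underline{a}_{J \setminus \{n\}}| = 1 - a_n$ indexed by subsets $J\setminus\{n\} \subset \{1,\dots,n-1\}$. Restricting the perturbation to keep $a_n$ fixed, these translated walls are avoided precisely when the stated hypothesis holds; the case of perturbing $a_n$ itself reduces to this after relabeling, since the conclusion is symmetric in the marked points.
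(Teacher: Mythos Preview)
There is a circularity problem: in this paper Proposition~\ref{p:1} is \emph{proved using} Lemma~\ref{l:3} (through Lemma~\ref{l:4}), so invoking \eqref{e:1} to establish Lemma~\ref{l:3} closes a logical loop. The paper is explicit that it reproves Proposition~\ref{p:1} in its own formulation rather than quoting \cite{AG} as a black box, and that reproof requires the present lemma as input. Your argument for the main assertion is correct \emph{given} Proposition~\ref{p:1}, but within the paper's structure it cannot stand.

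The paper's proof avoids this by arguing geometrically, with no appeal to Proposition~\ref{p:1}. When $a_j \geq a_j'$ for all $j$ there is Hassett's reduction morphism $\pi\colon \overline{\mathcal{M}}_{g,\underline{a}} \to \overline{\mathcal{M}}_{g,\underline{a}'}$, and one observes directly that $\psi_j = \pi^*\psi_j$ for every $j$ precisely when $\mathcal{P}(\underline{a}) = \mathcal{P}(\underline{a}')$; the projection formula then gives equality of the correlators. For incomparable $\underline{a}$ and $\underline{a}'$ the paper instead compares both to $\overline{\mathcal{M}}_{g,n}$: the boundary corrections $\Delta_i$ appearing in $\psi_i = \pi^*\psi_i + \Delta_i$ depend only on the set $\mathcal{P}(\underline{a})$, hence coincide for $\underline{a}$ and $\underline{a}'$. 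If you were willing to import Proposition~\ref{p:1} directly from \cite{AG} as an external result, your expansion argument would be a perfectly clean one-line alternative---and it makes the ``in particular'' clause immediate without your last paragraph's bookkeeping---but as the paper is organized, the geometric proof is what is required here.
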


\begin{proof}
As in Lemma~\ref{l:1}, if $a_j \geq a'_j$ for all $j$, there is a birational morphism
\[
\pi: \overline{ \mathcal{M} }_{g, a_1\dots a_n}  \rightarrow \overline{ \mathcal{M} }_{g, a'_1\dots a'_n}.
\]
Furthermore, $\psi_j = \pi^* \psi_j$ if and only if $\mathcal{P}(\underline{a}) = \mathcal{P}(\underline{a}')$.
It is not difficult to see that the space of weight vectors $I^n \subset [0^+, 1]^n$ is divided into chambers by linear faces defined by 
$| \underline{a}_J | = 1$ or $1^+$ for some $J$ and $\langle \tau_{\underline{e}; \, \underline{a}} \rangle = \langle \tau_{\underline{e}; \, \underline{a'}} \rangle$ if $\underline{a}$ and $\underline{a}'$ belong to the same chamber.
The lemma now follows.

Alternatively, there are birational morphisms
\[
\pi: \Mbar_{g,n} \to \overline{ \mathcal{M} }_{g, a_1\dots a_n}, \quad \pi': \Mbar_{g,n}  \rightarrow \overline{ \mathcal{M} }_{g, a'_1\dots a'_n}
\]
such that
\[
 \psi_i = \pi^* (\psi_i ) + \Delta_i, \quad  \psi_i = \pi^* (\psi'_i ) + \Delta'_i .
\]
Because of $\mathcal{P}(\underline{a}) = \mathcal{P}(\underline{a}')$, $\Delta_i = \Delta'_i$ for all $i$.
Hence $\langle \tau_{\underline{e}; \, \underline{a}} \rangle = \langle \tau_{\underline{e}; \, \underline{a'}} \rangle$.
\end{proof}

\begin{lemma} \label{l:4}
\[
\begin{split}
\langle \tau_{e_1; a_1} \dots \tau_{e_n; a_n} \rangle 
= 
 \sum_{ \substack{ J \subset \{1,\dots, n-1\} \\ |\underline{a}_J| + a_n \leq 1 } } (-1)^{\# (J)}
\langle   \tau_{\underline{e}_{J^c}; \, \underline{a}_{J^c}} \tau_{e_n + |\underline{e}_J| - \# (J) } \rangle.
\end{split}
\]
\end{lemma}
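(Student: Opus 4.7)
The plan is to derive this lemma as a direct combinatorial reorganization of Proposition~\ref{p:1}, grouping admissible partitions according to the part that contains the last marked point. I would first expand the LHS via~\eqref{e:1} as a signed sum over $p \in \mathcal{P}(\underline{e};\underline{a})$. Every such $p$ has a unique part $S$ containing the index $n$; writing $S = \{n\} \sqcup J$ with $J \subset \{1,\dots,n-1\}$, the admissibility of $S$ translates exactly into $|\underline{a}_J| + a_n \leq 1$, which is the condition indexing the RHS. Removing $S$ from $p$ leaves an admissible partition $p'$ of $\underline{a}_{J^c}$, producing a bijection
\[
\mathcal{P}(\underline{e};\underline{a}) \;\longleftrightarrow\; \bigsqcup_{\substack{J \subset \{1,\dots,n-1\} \\ |\underline{a}_J| + a_n \leq 1}} \mathcal{P}(\underline{e}_{J^c};\underline{a}_{J^c}), \qquad p \longmapsto (J, p').
\]

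Under this correspondence the numerical data factor as one would hope. The comparison $\rho(p) = \rho(p') + 1$ combined with the point counts ($n$ versus $n - 1 - \#(J)$) gives $\op{codim}(p) = \op{codim}(p') + \#(J)$. The defining recipe for $\tau_{p(\underline{e})}$ makes the part $S$ contribute the single insertion $\tau_{1 + (e_n - 1) + \sum_{j \in J}(e_j - 1)} = \tau_{e_n + |\underline{e}_J| - \#(J)}$, while the remaining parts contribute $\tau_{p'(\underline{e}_{J^c})}$. Substituting these identifications into Proposition~\ref{p:1} rewrites the LHS as
\[
\sum_{|\underline{a}_J| + a_n \leq 1} (-1)^{\#(J)} \sum_{p' \in \mathcal{P}(\underline{e}_{J^c};\underline{a}_{J^c})} (-1)^{\op{codim}(p')} \langle \tau_{p'(\underline{e}_{J^c})}\,\tau_{e_n + |\underline{e}_J| - \#(J)} \rangle.
\]

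The final step is to recognize each inner sum as a single weighted correlator by applying Proposition~\ref{p:1} once more, now to the mixed-weight list $(\underline{a}_{J^c}, 1)$ with $\psi$-exponents $(\underline{e}_{J^c}, e_n + |\underline{e}_J| - \#(J))$. The only observation needed is that the appended weight-$1$ marked point must occupy a singleton part in every admissible partition, since its weight already saturates the admissibility bound; this yields a natural bijection between admissible partitions of $(\underline{a}_{J^c}, 1)$ and $\mathcal{P}(\underline{e}_{J^c};\underline{a}_{J^c})$, with codimensions and $\tau$-insertions matching term by term. I anticipate the only real work to be careful bookkeeping of these signs and codimensions; no geometric input beyond Proposition~\ref{p:1} should be required.
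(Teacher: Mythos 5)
Your combinatorial bookkeeping is internally consistent: the grouping of an admissible partition by the part containing the index $n$, the codimension count $\op{codim}(p)=\op{codim}(p')+\#(J)$, and the observation that an appended weight-$1$ point must occupy a singleton part are all correct, and the implication ``Proposition~\ref{p:1} $\Rightarrow$ Lemma~\ref{l:4}'' that you establish is a true formal statement. The difficulty is that the argument is circular within this paper. Lemma~\ref{l:4} is proved \emph{before} Proposition~\ref{p:1}, and the paper's proof of Proposition~\ref{p:1} consists precisely of Lemma~\ref{l:4} plus induction on the number of marked points of weight $<1$ --- your computation is essentially that induction step read backwards. The authors state explicitly that, although the proposition first appeared in \cite{AG}, their formulation differs and they supply their own proof; so Proposition~\ref{p:1} is not available as an input when Lemma~\ref{l:4} is being established, and deriving the lemma from the proposition proves nothing unless you also give an independent proof of the proposition.

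What is missing is the geometric input. The paper proves Lemma~\ref{l:4} by a wall-crossing argument: first perturb the weights to $\underline{a}'$ with $\mathcal{P}(\underline{a})=\mathcal{P}(\underline{a}')$ and with all partial sums $|\underline{a}'_J|$ distinct, so that by Lemma~\ref{l:3} the correlator is unchanged and the subsequent walls are crossed one at a time; then increase $a'_n$ to $1$, applying the single-wall comparison of Lemma~\ref{l:2} (itself proved from $\psi_i=\pi^*\psi_i+\Delta$ and the self-intersection formula for the boundary divisor) at each crossing. Each crossing contributes exactly one subset $J$ with $|\underline{a}_J|+a_n\le 1$, which is where the signed sum on the right-hand side actually originates. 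Some such geometric step is unavoidable here: the lemma compares integrals over different Hassett spaces, and a purely combinatorial rearrangement of admissible partitions cannot substitute for it.
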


\begin{proof}
It is not difficult to see that we can choose $a_1',\dots,a_n'$ such that 
\begin{enumerate}
\item[(i)] $\mathcal{P}(\underline{a}) = \mathcal{P}(\underline{a}')$;
\item[(ii)] $|\underline{a}'_J |$ are all different for different $J \subset \{ 1,\dots,n-1 \}$.
\end{enumerate}
By (i) and Lemma~\ref{l:3}, $\langle \tau_{\underline{e};\, \underline{a}} \rangle = \langle \tau_{\underline{e}; \, \underline{a}'} \rangle$.
Now, gradually increase $a'_n$ to $1$. By (ii) above, each wall-crossing is of the type in Lemma~\ref{l:2}. 
Apply it repeatedly we have
\[
\begin{split}
 \langle \tau_{e_1;a_1} \dots \tau_{e_n; a_n} \rangle
&=
\langle \tau_{e_1;a_1'} \dots \tau_{e_n; a_n'} \rangle
\\
&=
\sum_{ \substack{ J \subset \{1,\dots, n-1\} \\ | \underline{a}'_J | + a_n' \leq 1 } } 
(-1)^{ \# (J) }
\langle  \tau_{\underline{e}_{J^c}; \, \underline{a}_{J^c}' }  \tau_{e_n + |e_J| - \#(J) } \rangle
\\
&=
\sum_{ \substack{ J \subset \{1,\dots, n-1\} \\ | \underline{a}_J | + a_n \leq 1 } } 
(-1)^{ \# (J) }
\langle  \tau_{\underline{e}_{J^c}; \, \underline{a}_{J^c} }  \tau_{e_n + |e_J| - \#(J) } \rangle ,
\end{split}
\]
where Lemma~\ref{l:3} is used in the last equality.
\end{proof}

\begin{proof}[Proof of Proposition~\ref{p:1}]
Finally, we are ready to prove the proposition by induction on the number of marked points with weight $<1$.
\[
\begin{split}
&\langle \tau_{e_1; a_1} \dots \tau_{e_n; a_n} \rangle   \\
=&
\sum_{ \substack{ J \subset \{1,\dots, n-1\} \\ | \underline{a}_J | + a_n \leq 1 } } 
(-1)^{\# (J)}
\langle  \tau_{\underline{e}_{J^c}; \, \underline{a}_{J^c} }  \tau_{e_n + |e_J| - \#(J) } \rangle 
\\
= &
\sum_{ \substack{ J \subset \{1,\dots, n-1\} \\ | \underline{a}_J | + a_n \leq 1 } } 
\sum_{p \in \mathcal{P}(  \underline{a}_{J^c},  \underline{a}_J +a_n  ) } 
(-1)^{ \op{codim}(p) }
\langle  \tau_{p \left( \underline{e}_{J^c},\,  |\underline{e}_J | - \# (J) + e_n  ;\, \underline{a}_{J^c},\, |\underline{a}_J |+ a_n  \right)}  \rangle 
\\
=&
\sum_{p\in \mathcal{P}(a_1,\dots,a_n)} (-1)^{\op{codim}(p)}
\langle \tau_{ p(\underline{e} ; \, \underline{a})  } \rangle .
\end{split}
\]
Here in the second equality, the induction hypothesis is used.
The thrid equality holds as 
\[
 \mathcal{P}(a_1,\dots,a_n) = \bigsqcup_{ \substack{ J \subset \{1,\dots, n-1\} \\ | \underline{a}_J | + a_n \leq 1 } }  \mathcal{P}(  \underline{a}_{J^c} , \, \underline{a}_J +a_n ) .
\]
Note that here on the RHS, $\mathcal{P}(  \underline{a}_{J^c} , \, \underline{a}_J +a_n )$ is identified as a subset of $\mathcal{P}(a_1,\dots,a_{n})$ by appending $(\underline{a}_J a_n)$ at the end of each partition of $\underline{a}_{J^c}$.
We also note that for brevity we abuse the notation above and used $\mathcal{P}(\underline{a})$ for $\mathcal{P}(\underline{e}; \underline{a})$.
%$p \in \mathcal{P}(a_1,\dots,a_n)$ should be either of type $p_1$ or of type $p_2$. It only depends on whether $a_n$ pairs with others or not.
\end{proof}

%Here we give a corollary of wall-crossing formula:
\begin{corollary} \label{c:1}
For $k \geq -1$, $ \underline{e} \in \mathbb{N}^n $, and $( b, \underline{a} )\in \A^{n+1}$, we have
\[
\langle
\tau_{k+1;b} \tau_{\underline{e}; \, \underline{a}}
\rangle
= 
\langle
\tau_{k+1} \tau_{\underline{e}; \, \underline{a}}
\rangle
-  \sum_{ \substack{ I \subset \{1,\dots, n\}   \\   b + | \underline{a}_I |  \leq 1 }  }
\langle
\tau_{ k+1+| \underline{e}_I | - |I|  ;\, b + | \underline{a}_I | } \tau_{ \underline{e}_{I^c} ; \, \underline{a}_{I^c} }
\rangle.
\]
\end{corollary}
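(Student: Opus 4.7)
The plan is to prove the equivalent reformulation
\[
\sum_{\substack{I \subset \{1,\dots,n\} \\ b+|\underline{a}_I|\leq 1}}
\langle \tau_{k+1+|\underline{e}_I|-|I|;\,b+|\underline{a}_I|}\, \tau_{\underline{e}_{I^c};\,\underline{a}_{I^c}}\rangle
= \langle \tau_{k+1}\, \tau_{\underline{e};\,\underline{a}}\rangle,
\]
obtained from Corollary~\ref{c:1} by moving the $I=\emptyset$ summand on the right, which is precisely $\langle \tau_{k+1;b}\,\tau_{\underline{e};\,\underline{a}}\rangle$, across to the left-hand side. Isolating that term at the end recovers the stated identity (where the sum there is understood over nonempty $I$).

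First I would apply Lemma~\ref{l:4} to each summand, taking the merged weighted point $\tau_{k+1+|\underline{e}_I|-|I|;\,b+|\underline{a}_I|}$ in the role of the ``last'' marked point. This is legitimate because $\A$ is additively closed, so $b+|\underline{a}_I|\in\A$ is a valid weight. The lemma expresses each summand as
\[
\sum_{\substack{K\subset I^c \\ b+|\underline{a}_{I\cup K}|\leq 1}} (-1)^{|K|}\,
\langle \tau_{\underline{e}_{(I\cup K)^c};\,\underline{a}_{(I\cup K)^c}}\, \tau_{k+1+|\underline{e}_{I\cup K}|-|I\cup K|}\rangle.
\]
Next I would substitute back and reindex the resulting double sum by $J := I\sqcup K$ (so $I\subset J$ and $K=J\setminus I$). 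Since $|\underline{a}_I|\leq|\underline{a}_J|$, the condition $b+|\underline{a}_J|\leq 1$ automatically implies $b+|\underline{a}_I|\leq 1$, so the outer weight constraint is subsumed. Collecting terms converts the double sum into
\[
\sum_{\substack{J\subset\{1,\dots,n\}\\ b+|\underline{a}_J|\leq 1}} \Bigl(\sum_{I\subset J}(-1)^{|J\setminus I|}\Bigr)\,
\langle \tau_{\underline{e}_{J^c};\,\underline{a}_{J^c}}\, \tau_{k+1+|\underline{e}_J|-|J|}\rangle.
\]
The inner alternating sum is the standard inclusion–exclusion identity, equal to $[J=\emptyset]$, so only $J=\emptyset$ survives and yields precisely $\langle \tau_{\underline{e};\,\underline{a}}\,\tau_{k+1}\rangle$ as desired.

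The main (and essentially only) obstacle will be careful bookkeeping: I have to verify that under the reindexing the sign $(-1)^{|K|}$ becomes $(-1)^{|J\setminus I|}$ with no leftover factors, and that the compound constraint produced by the inner Lemma~\ref{l:4} expansion genuinely dominates the outer constraint on $I$. Both checks are immediate from non-negativity of the weights in $\A$, so the argument really reduces to a two-step application of Lemma~\ref{l:4} followed by the collapsing of a Möbius-type alternating sum.
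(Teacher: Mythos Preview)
Your argument is correct and, in fact, somewhat cleaner than the paper's. The paper proves Corollary~\ref{c:1} by applying the full partition expansion of Proposition~\ref{p:1} to the left-hand side, decomposing $\mathcal{P}(b,\underline{a})$ according to which subset $J$ merges with $b$, and then matching the resulting pieces with the right-hand side by a second application of Proposition~\ref{p:1}. By contrast, you work directly with the simpler Lemma~\ref{l:4}, which only lifts a single weight to $1$, and then collapse the resulting double sum via the standard M\"obius identity $\sum_{I\subset J}(-1)^{|J\setminus I|}=[J=\emptyset]$. This avoids invoking the full admissible-partition machinery and makes the cancellation entirely transparent; the price is that you must verify the nesting of the two weight constraints, which (as you note) follows immediately from positivity of the weights. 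Both routes are short, but yours is more self-contained since Lemma~\ref{l:4} is logically prior to Proposition~\ref{p:1} in the paper's development.
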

\begin{proof}
As in the proof of Proposition~\ref{p:1}, notice that 
\[
\begin{split}
\mathcal{P}(b, \underline{a}) 
&=\bigsqcup_{ \substack{ J \subset \{1,\dots, n\} \\ b +   |\underline{a}_J|   \leq 1 } }  \mathcal{P}( b+ \underline{a}_J   , \, \underline{a}_{J^c} )
\\
&= \mathcal{P}(\underline{a}) \sqcup
\left(
\sqcup _{ \substack{  \phi \neq J \subset \{ 1,\dots,n \}  \\  b+| \underline{a}_J | \leq 1  } }
\mathcal{P}( b+ \underline{a}_J , \, \underline{a}_{J^c}  )
\right).
\end{split}
\]
Here $\mathcal{P}(\underline{a})$ is identified as a subset of $\mathcal{P}(b,\underline{a})$ by appending $(b)$ at the end of each partition of $\underline{a}$.

Let 
\[
\mathcal{P}_1 := \mathcal{P}(\underline{a}),
\qquad
\mathcal{P}_2 := \sqcup _{ \substack{  \phi \neq J \subset \{ 1,\dots,n \}  \\  b+| \underline{a}_J | \leq 1  } }
\mathcal{P}( b+ \underline{a}_J , \, \underline{a}_{J^c}  )
\]
for abbreviation and apply Proposition~\ref{p:1} to the left hand side of Corollary~\ref{c:1}
\[
\begin{split}
& \langle \tau_{k+1;b} \tau_{\underline{e}; \, \underline{a}}
\rangle \\
=& \sum_{p_1 \in \mathcal{P}_1} 
(-1)^{{\rm codim}(p_1)}  
\langle
\tau_{ p_1(k+1,\underline{e};\,  b,\underline{a} )   }
\rangle
+ \sum_{p_2 \in \mathcal{P}_2} 
(-1)^{{\rm codim}(p_2)}  
\langle
\tau_{ p_2(k+1,\underline{e}; \,  b,\underline{a} )   }
\rangle
\\
= 
&\langle
\tau_{k+1} \tau_{\underline{e}; \, \underline{a}}
\rangle
+ \sum_{p_2 \in \mathcal{P}_2} 
(-1)^{{\rm codim}(p_2)}  
\langle
\tau_{ p_2(k+1,\underline{e}; \, b,\underline{a} )   }
\rangle.
\end{split}
\]
It remains to show that 
\[
\sum_{p_2\in \mathcal{P}_2}
(-1)^{{\rm codim}(p_2)}
\langle
\tau_{ p_2(k+1,\underline{e}; \, b,\underline{a} )   }
\rangle
=
-  \sum_{ \substack{ I \subset \{1,\dots, n\}   \\   b + | \underline{a}_I | \leq 1  }  }
\langle
\tau_{ k+1+ |\underline{e}_I| - |I| ; \, b + | \underline{a}_I | } \tau_{ \underline{e}_{I^c} ;\, \underline{a}_{I^c} }
\rangle.
\]
This can be checked by applying Proposition~\ref{p:1} to the RHS.
The proof is now complete.
%Expanding the RHS by Proposition~\ref{p:1} and comparing the coefficient of 
%$\langle  \tau_{ p_2(k+1,\underline{e}; b,\underline{a} )   }  \rangle$ for fixed $p_2 \in \mathcal{P}_2$, we have
%\begin{enumerate}
%\item 
%%\[
%$\langle  \tau_{ p_2(k+1,\underline{e}; b,\underline{a} )   }  \rangle [{\rm LHS}] 
%=  (-1)^{{\rm codim}(p_2)} .$
%%\]
%\item For the RHS, we give a notation first. We write $p_2(b,\underline{a}) = (b a_1 \dots a_m) (...)(...)$
%\[
%\begin{split}
%\langle  \tau_{ p_2(k+1,\underline{e}; b,\underline{a} )   }  \rangle [{\rm RHS}]
%&= \langle  \tau_{ p_2(k+1,\underline{e}; b,\underline{a} )   }  \rangle
%\left[
%-\sum_{\phi \neq J \subset \{1,\dots, m\}}  
%\langle
%\tau_{ k+1+ |\underline{e}_J| - |J| ; b + | \underline{a}_J | } \tau_{ \underline{e}_{J^c} ; \underline{a}_{J^c} }
%\rangle
%\right]
%\\
%&= -\sum_{\phi \neq J \subset \{1,\dots, m\}} (-1)^{{\rm codim}(p_2) - |J| }
%= (-1)^{ {\rm codim}(p_2) }.
%\end{split}
%\]
%\end{enumerate}
%This computation holds for all $p_2 \in \mathcal{P}_2$ and we complete the proof.
\end{proof}

\subsection{Correlators with unstable components} \label{s:1.2}

For future reference, we introduce the following \emph{convention}
\begin{equation} \label{e:unstable}
\langle \tau_{k_1;a_1} \dots \tau_{k_n;a_n}  \rangle_{0,n}
:= \sum_{p \in \mathcal{P}(\underline{a})}  (-1)^{ \op{codim} (p) }
\langle \tau_{p( \underline{k}; \, \underline{a} )} \rangle
\end{equation}
for the \emph{unstable} case $2g -2 + a_1 + \dots + a_n \leq 0$ via Proposition~\ref{p:1}.
We note that even though the unstable correlators on the RHS (weight all equal to $1$) vanish by definition,
unstable correlators for other weights might not vanish.
Also, the weighted unstable correlators do not correspond to integration over Hassett's moduli, which by definition is empty.
For example, 
\[
 \langle \tau_{0; 1/3} \tau_{0; 1/3} \tau_{0; 1/3} \rangle_{0,3} =  \langle \tau_{0} \tau_{0} \tau_{0} \rangle_{0,3} =1.
\]

We now introduce the generating functions of these correlators.
Let the ``phase space'' $\mathbf{H}^{\A}$ be  $\mathbb{C}^{\mathbb{N} \times \A}$, where  $\A \subset [0^+, 1]$ is an additively closed subset in the following sense: $\forall a_1, a_2 \in \A$, if $a_1 +a_2 \in [0^+, 1]$ then $a_1+a_2 \in \A$.
Basic examples of such additively closed $\A$ of finite cardinality include $\A= \{ 0^+ \},  \{ 0^+, 1 \}$, $\{ 1 \}$, $\{ \frac{1}{2}, 1 \}$ and $[0^+, 1]$.

Let 
\[
 t := \sum_{k \in \mathbb{N}, a \in \A} t_{k;a} z^k \mathbf{e}^a \in \mathbf{H}^{\A},
 \]
where $\{ z^k \mathbf{e}^a \}$ denote the ``standard basis'' of $\mathbf{H}$.
Introduce the generating function
\begin{equation} \label{e:2}
F^{\A} (t) := \sum_{ n } \frac{1}{n!} \langle t^{\otimes n} \rangle
= \sum_{n} \frac{1}{n!} \sum_{ ( \underline{k}, \underline{a} ) \in \mathbb{N}^n \times \A^n }
t_{\underline{k}; \underline{a}} \langle  \tau_{\underline{k}; \, \underline{a}} \rangle =: \sum_{g=0}^{\infty} F_g^{\A},
\end{equation}
where $F_g^{\A}$ is the sum of all correlators with genus $g$, as defined by \eqref{e:g}.
A choice of $\A$ is made throughout.
%\emph{Unless necessary, the superscript $\A$ in $F^{\A}$ will be omitted.}

For example, if $a_i=1$ for all $i$, we have the ``usual'' Witten--Kontsevich generating function \cite{eW}
\begin{equation} \label{e:3}
F (t) = \left( \sum_{n} \frac{1}{n!} \sum_{ \underline{k} \in \mathbb{N}^n }
t_{k_1} \dots t_{k_n} \langle \tau_{ \underline{k}} \rangle \right).
\end{equation}
Here we use the convention 
\[
t_k := t_{k;1} , \quad \tau_{k} := \tau_{k;1}
\]
for the weight $1$ markings.

\section{Virasoro constraints} \label{s:2}

\subsection{A combinatorial function $h$}
%In this section, we recall virasoro constraints for unweighted case and introduce $h$ function for the later convenience. 

Given $k, e \in \mathbb{N}$, we define
\[
h_{k; e} := \frac{(2k+2e+1)!!}{(2e-1)!!}.
\]
For general $ \underline{e} \in \mathbb{N}^n$, define 
\[
h_{k;  \underline{e}} := 
\sum_{ \phi \neq J \subset \{1,\dots,n \} } 
(-1)^{ \# (J) - 1 } h_{  k;  \ | \underline{e}_J | - \#(J) + 1}  . %,
\]
%where  $\underline{e}_J$ is the ordered sublist of $\underline{e}$, consisting of sub-indices in $J$ and
%\[
%  |{\underline{e}_J}| : = \sum_{j\in J}  e_j., \qquad  \# (J) = \op{cardinality}(J).
%\]
For example:
\[
\begin{split}
h_{k; i,j} := &h_{k; i} + h_{k;j} - h_{k;i+j-1}  \\
= & \frac{(2k+2i+1)!!}{(2i-1)!!} + \frac{(2k+2j+1)!!}{(2j-1)!!} - \frac{(2k+2i+2j-1)!!}{(2i+2j-3)!!}.
\end{split}
\]

For the future reference, we write $\underline{e}' \leq \underline{e}$ if $\underline{e}'$ is a sublist of $\underline{e}$;
we write $\underline{e}' + \underline{e}'' = \underline{e}$ if $\underline{e}''$ is the complementary sublist of $\underline{e}' \leq \underline{e}$; the notation $(\underline{e}')^c$ is also used for the complementary sublist.
We say $\underline{e}' \sim \underline{e}''$ if the underlying unordered sublists , i.e., subsets, of $\underline{e}'$ and $\underline{e}''$ are identical.
Denote by $\op{Power}(\underline{e})$ the equivalence classes of the sublists of $\underline{e}$. 

We can similarly define Power$(\underline{e}, \underline{a})$ to be the equivalence classes of the sublists of the list of 2-tuples $(\underline{e}, \underline{a}) = ( [e_1a_1], [e_2a_2], \dots, [e_na_n] )$. Here $\underline{a}$ just goes along for the ride. Same definition works for the list of $n$-tuples.

\begin{lemma} \label{l:h}
For $\underline{e} \in \mathbb{N}^n$ and $\underline{a} \in \A^n$, we have
\begin{equation} \label{e:h}
 h_{k;\underline{e}} = \sum_{[ \underline{e}', \underline{a}' ] \in \op{Power}( \underline{e}, \underline{a})} (-1)^{\# (\underline{e}') -1}  \frac{ \# \left( \op{ Aut} ( \underline{e}, \underline{a}) \right)} { \# ( \op{ Aut} ( \underline{e}', \underline{a}') ) \#  ( \op{ Aut} ( \underline{e}', \underline{a}' )^c) } 
 h_{k; | \underline{e}' | - \# (\underline{e}') +1 },
\end{equation}
where the summation is taken over all equivalence classes once.

In particular, if $a_1=a_2= ... = a_n$, the above equation reduces to the following form:
\begin{equation}
 h_{k;\underline{e}} = \sum_{[ \underline{e}' ] \in \op{Power}( \underline{e})} (-1)^{\# (\underline{e}') -1}  \frac{ \# \left( \op{ Aut} ( \underline{e}) \right)} { \# ( \op{ Aut} ( \underline{e}') ) \#  ( \op{ Aut} ( \underline{e}')^c) } 
 h_{k; | \underline{e}' | - \# (\underline{e}') +1 },
\end{equation}
\end{lemma}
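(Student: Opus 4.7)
The plan is to start from the defining identity
\[
h_{k;\underline{e}} = \sum_{\emptyset \neq J \subset \{1,\ldots,n\}} (-1)^{\#J-1}\, h_{k;\, |\underline{e}_J|-\#J+1}
\]
and regroup the subsets $J \subset \{1,\ldots,n\}$ according to the equivalence class of the labelled sublist $(\underline{e}_J,\underline{a}_J) \in \op{Power}(\underline{e},\underline{a})$. The key observation is that $\#J = \#\underline{e}_J$ and $|\underline{e}_J|$, and hence the summand $h_{k;|\underline{e}_J|-\#J+1}$, depend only on the multiset of pairs underlying $(\underline{e}_J,\underline{a}_J)$, so every $J$ in a fixed equivalence class contributes the same value. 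The sum therefore collapses to a sum over classes, weighted by the number $N_{[\underline{e}',\underline{a}']}$ of subsets $J$ with $(\underline{e}_J,\underline{a}_J) \sim (\underline{e}',\underline{a}')$.

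The crux is to identify
\[
N_{[\underline{e}',\underline{a}']} = \frac{\#\op{Aut}(\underline{e},\underline{a})}{\#\op{Aut}(\underline{e}',\underline{a}') \cdot \#\op{Aut}((\underline{e}',\underline{a}')^c)}.
\]
I would establish this by orbit--stabilizer: let $\op{Aut}(\underline{e},\underline{a}) \leq S_n$ act on the power set of $\{1,\ldots,n\}$. Two subsets lie in the same orbit iff the associated sublists are equivalent; the nontrivial direction uses that equivalence of the $J$-parts forces equivalence of the complementary parts (since $(\underline{e},\underline{a})$ is their multiset union) and that the two piecewise bijections can be pasted into a single element of $\op{Aut}(\underline{e},\underline{a})$. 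The stabilizer of $J$ consists of permutations preserving both $J$ as a set and the labels, and any such permutation splits uniquely into its restrictions to $J$ and $J^c$, giving $\op{Aut}(\underline{e}_J,\underline{a}_J) \times \op{Aut}(\underline{e}_{J^c},\underline{a}_{J^c})$. Alternatively --- and this is the more elementary route --- writing $m_\alpha$ and $m'_\alpha$ for the multiplicities of a pair $\alpha$ in $(\underline{e},\underline{a})$ and $(\underline{e}',\underline{a}')$ respectively, the quotient above equals $\prod_\alpha \binom{m_\alpha}{m'_\alpha}$, which is visibly the number of subsets realizing that submultiset.

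Substituting $N_{[\underline{e}',\underline{a}']}$ into the regrouped sum produces \eqref{e:h}. The ``In particular'' case then follows without extra work: when $a_1 = \cdots = a_n$ the pair-equivalence classes and automorphism groups collapse to their $\underline{e}$-only counterparts, i.e.\ $\op{Power}(\underline{e},\underline{a}) = \op{Power}(\underline{e})$ and $\op{Aut}(\underline{e},\underline{a}) = \op{Aut}(\underline{e})$ (and similarly for the sublists), so the statement degenerates exactly to the stated simplified form. I do not anticipate a real obstacle here; the only point requiring honest attention is verifying the factorization of the stabilizer and the matching of orbits with equivalence classes, and both reduce to the elementary multinomial identity indicated.
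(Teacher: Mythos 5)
Your proposal is correct and follows essentially the same route as the paper: regroup the defining sum over nonempty subsets $J$ by the equivalence class of $(\underline{e}_J,\underline{a}_J)$, note the summand is class-invariant, and identify the multiplicity of each class with the ratio of automorphism-group orders. The paper merely asserts the counting step ("it is easy to see"); your orbit--stabilizer argument and the equivalent multinomial identity $\prod_\alpha\binom{m_\alpha}{m'_\alpha}$ supply the justification it omits.
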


\begin{proof}
We note that by definition $ h_{k; | \underline{e}' | - \# (\underline{e}') +1 } =  h_{k; | \underline{e}' | - \# (\underline{e}'') +1 }$ if $(\underline{e}', \underline{a}') \sim (\underline{e}'', \underline{a}'')$.
It is easy to see that the coefficients $\frac{ \# \left( \op{ Aut} ( \underline{e}, \underline{a}) \right)} { \# ( \op{ Aut} ( \underline{e}', \underline{a}') ) \#  ( \op{ Aut} ( \underline{e}', \underline{a}' )^c) } $ is the number of  ordered sublists in the same equivalence class as $(\underline{e}', \underline{a}')$.
\end{proof}

%Since$h_{k; \underline{e}}$ does not depend on the order of the list $\underline{e}$, we sometimes also use the notation
%$h_{k;J}$ for $J$ a set of numbers.

\subsection{Virasoro constraints for unweighted case}
To motivate the general weighted Virasoro, it might be beneficial to recall the ``unweighted'' case, i.e., all marked points are of weight $1$.
\begin{lemma}[Virasoro operator] \label{l:5}
We define the following differential operators:
\[
\begin{split}
 2 L_{-1}  &=
- \frac{\partial}{\partial t_0} 
+ \sum_{i= 1}^{\infty}  t_i \frac{ \partial }{ \partial t_{i-1} } + \frac{t_0^2}{2},
\\
 2 L_0 & = 
-3 \frac{\partial}{\partial t_1} 
+ \sum_{i=0}^{\infty} (2i+1) t_i \frac{ \partial }{ \partial t_i } + \frac{1}{8} ,
\\
 2 L_k  &= 
-(2k+3)!! \frac{ \partial }{ \partial t_{k+1} } 
+ \sum_{i=0}^{\infty} h_{k;i} \ t_i \frac{ \partial }{ \partial t_{i+k} } 
\\
 &+ \frac{1}{2} \sum_{ \substack{ r+s=k-1 \\ r,s \geq 0 } } (2r+1)!! (2s+1)!! 
\quad \frac{ \partial^2 }{ \partial t_r  \partial t_s }, \quad {\rm for} \ k \geq 1.
\end{split}
\]
These operators satisfy the following \emph{Virasoro relations}
\begin{equation} \label{e:vir}
[ L_m, L_n ] = (m-n) L_{m+n}, \quad {\rm for } \ m, n \geq -1.
\end{equation}
Furthermore, let $F(t)$ be the weight $1$ generating function, then the following \emph{Virasoro constraints} hold
\[
 L_k ( e^{F} )=0 \quad \mbox{ for } k \geq -1.
\]
\end{lemma}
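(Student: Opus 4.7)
The statement has two essentially independent parts: the abstract commutation relations \eqref{e:vir} among the $L_k$, and the annihilation $L_k e^F = 0$. I would prove them separately.

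For the commutation relations, the plan is a brute-force but principled computation. Split each $L_k$ into three pieces: the \emph{translation term} $-(2k+3)!!\,\partial_{t_{k+1}}$, the \emph{linear transport term} $T_k := \sum_{i\ge 0} h_{k;i}\, t_i\, \partial_{t_{i+k}}$ (for $k=-1$ and $k=0$ the coefficients reduce, up to a constant, to $1$ and $2i+1$ respectively), and the \emph{quadratic term} $Q_k := \tfrac12 \sum_{r+s=k-1}(2r+1)!!(2s+1)!!\,\partial_{t_r}\partial_{t_s}$. In $[L_m,L_n]$ the $(\text{translation},\text{translation})$ bracket vanishes, and the remaining brackets fall into three types. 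The linear-linear bracket $[T_m, T_n]$ yields a transport term whose coefficient is the key double-factorial identity
\[
h_{m;i+n}\,h_{n;i} \;-\; h_{n;i+m}\,h_{m;i} \;=\; (m-n)\,h_{m+n;i},
\]
which is a direct consequence of $h_{k;i}=(2k+2i+1)!!/(2i-1)!!$. The $(\text{translation},\text{linear})$ brackets produce single derivatives that match the translation part of $L_{m+n}$, and the mixed $(\text{linear},\text{quadratic})$ and $(\text{translation},\text{quadratic})$ brackets combine to reproduce $Q_{m+n}$; the matching again reduces to identities among double factorials.

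For the annihilation statement, I would appeal to the classical three-step argument. The case $k=-1$ is the \emph{string equation}, obtained by pulling back along the forgetful morphism $\overline{\mathcal{M}}_{g,n+1}\to\overline{\mathcal{M}}_{g,n}$ and using $\pi^*\psi_i=\psi_i-D_{i,n+1}$ together with the triviality of the fiber integral $\int_{\mathrm{fib}}1$. The case $k=0$ is the \emph{dilaton equation}, derived similarly from $\pi_*\psi_{n+1}=(2g-2+n)[\overline{\mathcal{M}}_{g,n}]$. For $k\ge 1$, these are the \emph{Dijkgraaf--Verlinde--Verlinde recursion}, equivalent (given the string equation) to Witten's conjecture that $e^F$ is a tau-function of KdV; this is Kontsevich's theorem, and one may simply cite it here.

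The main obstacle is really just bookkeeping in the Lie-bracket computation: three species of term in each $L_k$ give six species of cross-bracket, and one has to keep track of which double-factorial identity handles which species. No individual identity is deep; the difficulty is organizational. Since the present lemma is classical and only serves to motivate the weighted version developed in the sequel, an alternative acceptable proof is simply to cite \cite{DVV} (or equivalently the exposition in Witten--Kontsevich) for both the Virasoro relations and the constraints, and to reserve original computation for the forthcoming weighted operators $L_{k;a}^{\A}$.
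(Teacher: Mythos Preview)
Your proposal is correct and matches the paper's own treatment, which is even terser: the paper simply remarks that the Virasoro relations follow from a straightforward computation and that the constraints on $e^F$ are equivalent to Kontsevich's theorem. The double-factorial identity you isolate is (up to a sign and a factor of $2$---recheck your bracket) exactly the $n=1$ case of Lemma~\ref{l:4.3}, which the paper establishes later when verifying the commutation relations for the generalized operators of Lemma~\ref{l:3.5}.
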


%\begin{proof}
The Virasoro relations can be checked with straightforward computation from the definition of the operators $L_k$. 
The Virasoro constraints on $e^F$ are equivalent to Kontsevich's theorem.
%\end{proof}
It is well known and easy to check that the Virasoro constraints above are equivalent to the following recursion relations. 

\begin{proposition} \label{p:2}  %[Witten's Conjecture with negative $\psi$ power]
For $k \geq -1$, $\underline{e} \in \mathbb{N}^n$, we have the following recursive relation:
\[
\begin{split}
 &\langle  \tau_{k+1}  \tau_{ \underline{e} } \rangle_{g, n+1} = \frac{1}{ (2k+3)!! } 
  \left\{ \sum_{j=1}^n h_{ k; e_j } \langle \tau_{\underline{e}+ k[j]} \rangle \right.
\\
 +&\frac{1}{2} \sum_{ \substack{ r+s=k-1 \\ r,s \geq 0 } } (2r+1)!!(2s+1)!! 
\left. \left[ \langle \tau_r \tau_s \tau_{\underline{e}} \rangle_{g-1,n+2} 
+ \sum_{  I \subset \{1,\dots n\}  }  \langle \tau_r \tau_{\underline{e}_I} \rangle \langle \tau_s \tau_{\underline{e}_{I^c}} \rangle \right] \right\} ,
\end{split}
\]
where 
\[
 \langle \tau_{ \underline{e}+k[j] } \rangle := \langle \tau_{e_1}\dots \tau_{e_j+k} \dots \tau_{e_n}  \rangle .
 \]
\end{proposition}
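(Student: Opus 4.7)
The plan is to derive the recursion by applying $\prod_{j=1}^n \partial/\partial t_{e_j}$ to the Virasoro constraint $L_k(e^F) = 0$ of Lemma~\ref{l:5} and evaluating at $t = 0$. This is a mechanical translation: the Virasoro constraint is a PDE for $F$, and Proposition~\ref{p:2} simply records its Taylor coefficients at the origin as relations among intersection numbers.

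Concretely, I would first rewrite $L_k(e^F) = 0$ as $e^{-F} L_k(e^F) = 0$. Using the identities $e^{-F}(\partial/\partial t_i) e^F = \partial F/\partial t_i$ and $e^{-F}(\partial^2/\partial t_r \partial t_s) e^F = \partial^2 F/\partial t_r \partial t_s + (\partial F/\partial t_r)(\partial F/\partial t_s)$, this becomes the polynomial identity
\[
-(2k+3)!!\,\frac{\partial F}{\partial t_{k+1}} + \sum_{i \geq 0} h_{k;i}\, t_i\, \frac{\partial F}{\partial t_{i+k}} + \tfrac{1}{2}\!\sum_{\substack{r+s=k-1 \\ r,s \geq 0}}\! (2r+1)!!(2s+1)!!\left(\frac{\partial^2 F}{\partial t_r \partial t_s} + \frac{\partial F}{\partial t_r}\frac{\partial F}{\partial t_s}\right) = 0,
\]
together with the inhomogeneous pieces $t_0^2/2$ and $1/8$ for $k = -1, 0$, which encode the initial correlators $\langle \tau_0^3\rangle = 1$ and $\langle \tau_1\rangle = 1/24$. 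Applying $\prod_{j=1}^n \partial/\partial t_{e_j}$ and setting $t = 0$, each summand translates into a correlator: the linear derivative term contributes $-(2k+3)!!\,\langle \tau_{k+1} \tau_{\underline{e}}\rangle$; in the $t_i$-linear term exactly one of the $\partial/\partial t_{e_j}$ must land on the explicit factor $t_i$ (forcing $i = e_j$), producing $\sum_{j=1}^n h_{k;e_j}\,\langle \tau_{\underline{e}+k[j]}\rangle$; the mixed second-derivative term yields $\langle \tau_r \tau_s \tau_{\underline{e}}\rangle_{g-1,n+2}$ with the genus shift forced by the dimension constraint \eqref{e:g}; and the Leibniz expansion of $(\partial F/\partial t_r)(\partial F/\partial t_s)$ produces $\sum_{I \subset \{1,\dots,n\}} \langle \tau_r \tau_{\underline{e}_I}\rangle \langle \tau_s \tau_{\underline{e}_{I^c}}\rangle$. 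Collecting these and solving for $\langle \tau_{k+1} \tau_{\underline{e}}\rangle$ gives the stated formula.

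There is no substantive obstacle; the only care required is combinatorial bookkeeping: checking the genus shift $g \mapsto g-1$ in the quadratic term (immediate from $3g - 3 + (n+1) = (k+1) + |\underline{e}|$, which rearranges to $3(g-1) - 3 + (n+2) = (k-1) + |\underline{e}|$), and matching the $L_{-1}, L_0$ inhomogeneous constants with the unstable initial correlators. The reverse implication (recursion $\Rightarrow$ Virasoro) is obtained by running the computation backwards and repackaging the correlator relations into the generating-function identity $L_k(e^F) = 0$, which justifies the assertion of equivalence made immediately before the proposition.
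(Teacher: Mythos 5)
Your proposal is correct and is exactly the verification the paper has in mind: the paper states right before Proposition~\ref{p:2} that the recursion is the "well known and easy to check" coefficient-level reformulation of the constraints $L_k(e^F)=0$ from Lemma~\ref{l:5}, and your Taylor-coefficient extraction (conjugating by $e^{-F}$, applying $\prod_j \partial/\partial t_{e_j}$, setting $t=0$, and checking the genus shift and the $k=-1,0$ inhomogeneous terms) is precisely that computation. Nothing is missing.
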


\subsection{Recursions for the weighted pointed correlators}
In order to write down the operators $L_k$, we start with the recursion relation for generating function with weighted points,
generalizing Proposition~\ref{p:2}.

%\begin{theorem}[Generalized Witten conjecture]  \label{t:3.1}
\begin{theorem}  \label{t:3.1}
For $k \geq -1$, $\underline{e} \in \mathbb{N}^n$, and $(b, \underline{a}) \in {\A}^{n+1}$, the following recursions hold for weighted pointed correlators
\[
\begin{split}
& \langle  \tau_{k+1; \, b}  \tau_{ \underline{e}; \,  \underline{a} }  \rangle_{g, n+1} =
- \sum_{ \substack{ \phi \neq  I \subset \{ 1,\dots, n \} \\ b + |\underline{a}_I | \leq 1 } }
\langle 
\tau_{k+1 + | \underline{e}_I | - | I | ; \, b + | \underline{a}_I | }
\tau_{\underline{e}_{I^c}; \, \underline{a}_{I^c}}
\rangle_{g, n - \#(I) + 1}
\\
& + \frac{1}{ (2k+3)!! } 
\left\{ \sum_{  \substack{ \phi \neq J \subset \{ 1,\dots, n \} \\  | \underline{a}_J | \leq 1}  }
h_{k; \, \underline{e}_J} 
\langle   \tau_{ | \underline{e}_J | - \#(J) + 1 + k;   \, | \underline{a}_J | } \ \tau_{ \underline{e}_{J^c}; \,  \underline{a}_{J^c} }  \rangle_{g, n-\#(J)+1} \right.
\\
+&\frac{1}{2} \sum_{ \substack{ r+s=k-1 \\ r,s \geq 0 } } (2r+1)!!(2s+1)!! 
\left. \left[ 
\langle \tau_r \tau_s \tau_{\underline{e}; \, \underline{a}} \rangle_{g-1,n+2} 
+ \sum_{  I \subset \{1,\dots, n\}  }  
\langle \tau_r \tau_{\underline{e}_I; \, \underline{a}_I} \rangle 
\langle \tau_s \tau_{\underline{e}_{I^c};\, \underline{a}_{I^c}} \rangle \right] \right\}
.
\end{split}
\]

This system of recursions uniquely determines $F(t)$ up to the initial conditions:
\[
\langle \tau_{1;a} \rangle = \frac{1}{24}, \quad \langle \tau_{0;a_1}\tau_{0;a_2}\tau_{0;a_3} \rangle = 1
\quad\mbox{ for any $a, a_1, a_2, a_3 \in \A$ }.
\]
\end{theorem}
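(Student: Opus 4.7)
The plan is to prove the recursion by reducing to the weight-$1$ (unweighted) Virasoro recursion of Proposition~\ref{p:2}, then deduce uniqueness by induction.

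For the recursion, the first step is to apply Corollary~\ref{c:1} to absorb the first sum on the RHS into the LHS, reducing the claim to an identity of the form
\[
\langle \tau_{k+1;1}\,\tau_{\underline{e};\underline{a}}\rangle = \frac{1}{(2k+3)!!}\Bigl\{\text{(second + third + fourth RHS sums of Theorem~\ref{t:3.1})}\Bigr\}.
\]
Since $\tau_{k+1;1}$ has weight $1$ and every $a_i>0$, Proposition~\ref{p:1} forces $\tau_{k+1;1}$ to sit alone in every admissible partition of $(1,\underline{a})$, so the left side expands as $\sum_{p\in\mathcal{P}(\underline{a})}(-1)^{\op{codim}(p)}\langle \tau_{k+1}\,\tau_{p(\underline{e})}\rangle$. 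Applying the unweighted recursion of Proposition~\ref{p:2} to each term produces four double sums; swapping the order of summation in each and reassembling inner sums into weighted correlators via Proposition~\ref{p:1} in its reverse direction should reproduce the four-term structure of the target. The third (genus-reduction) and fourth (factorization) sums match immediately: after fixing the distinguished subset $I\subset\{1,\dots,n\}$ and factoring partitions as $p = p_I\sqcup p_{I^c}$, Proposition~\ref{p:1} collapses each factor separately.

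The main obstacle is matching the first (linear $h$) sum. After swapping, one obtains
\[
\sum_{\emptyset\neq I:|\underline{a}_I|\leq 1}(-1)^{|I|-1}h_{k;|\underline{e}_I|-|I|+1}\,\langle \tau_{|\underline{e}_I|-|I|+1+k}\,\tau_{\underline{e}_{I^c};\underline{a}_{I^c}}\rangle,
\]
whereas the theorem wants $\sum_{\emptyset\neq J:|\underline{a}_J|\leq 1} h_{k;\underline{e}_J}\langle \tau_{|\underline{e}_J|-|J|+1+k;|\underline{a}_J|}\,\tau_{\underline{e}_{J^c};\underline{a}_{J^c}}\rangle$. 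The plan is to expand $h_{k;\underline{e}_J}$ using its definition as a signed sum over $\emptyset\neq I\subset J$, reindex the double sum by $(I, K:=J\setminus I)$, and invoke Corollary~\ref{c:1} (inverted) to collapse the inner $K$-sum, yielding
\[
\sum_{K\subset I^c:\,|\underline{a}_I|+|\underline{a}_K|\leq 1}\langle \tau_{|\underline{e}_{I\cup K}|-|I\cup K|+1+k;|\underline{a}_{I\cup K}|}\,\tau_{\underline{e}_{(I\cup K)^c};\underline{a}_{(I\cup K)^c}}\rangle = \langle \tau_{|\underline{e}_I|-|I|+1+k}\,\tau_{\underline{e}_{I^c};\underline{a}_{I^c}}\rangle.
\]
This last identity is where the delicate sign and admissibility bookkeeping live; I expect this to be the most intricate step, and Lemma~\ref{l:h} will likely be needed to handle automorphism factors when entries of $\underline{e}$ or $\underline{a}$ coincide.

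For uniqueness, I would induct lexicographically on $(g,n)$. The recursion expresses $\langle \tau_{k+1;b}\,\tau_{\underline{e};\underline{a}}\rangle$ in terms of correlators with strictly fewer marked points (first two sums, as $|I|,|J|\geq 1$), a single correlator of genus $g-1$ (third sum), or products of correlators each having strictly fewer points (fourth sum). Hence the recursion determines any correlator with at least two points once all strictly simpler correlators are known. The recursion degenerates to the trivial $0=0$ precisely when $(g,n)=(0,3)$ with all $k_i=0$ or $(g,n)=(1,1)$ with $k_1=1$; these undetermined values are exactly the two prescribed initial conditions.
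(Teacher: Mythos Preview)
Your proposal is correct and follows the paper's strategy closely: reduce to $b=1$ via Corollary~\ref{c:1}, expand using Proposition~\ref{p:1}, apply the unweighted recursion of Proposition~\ref{p:2} term by term, and reassemble the three resulting groups of sums into weighted correlators. Your handling of the genus-reduction and factorization sums matches the paper's equations~\eqref{e:4.3} and~\eqref{e:4.4} exactly.

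The one tactical difference is in the $h$-sum (the paper's \eqref{eq:4.2}). The paper expands both sides all the way down to unweighted correlators and compares coefficients of each $\langle\tau_{p(\underline e)+k[j]}\rangle$, which requires checking a binomial-type cancellation among the terms $h_{k;\,e_{j_1}+\cdots+e_{j_m}-m+1}$. Your plan---expand $h_{k;\underline e_J}$ by its definition, reindex the double sum by $(I,K=J\setminus I)$, and collapse the inner $K$-sum via the rearranged form of Corollary~\ref{c:1}---is a cleaner alternative that sidesteps that coefficient comparison, and it works exactly as you outline: the inner identity you need,
\[
\sum_{\substack{K\subset I^c\\ |\underline a_I|+|\underline a_K|\le 1}}
\bigl\langle \tau_{|\underline e_{I\cup K}|-|I\cup K|+1+k;\,|\underline a_{I\cup K}|}\,\tau_{\underline e_{(I\cup K)^c};\,\underline a_{(I\cup K)^c}}\bigr\rangle
= \bigl\langle \tau_{|\underline e_I|-|I|+1+k}\,\tau_{\underline e_{I^c};\,\underline a_{I^c}}\bigr\rangle,
\]
is precisely Corollary~\ref{c:1} applied over the index set $I^c$ with $b=|\underline a_I|$, rearranged so that the weight-$1$ term stands alone. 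One correction: Lemma~\ref{l:h} and automorphism factors are not needed here, since all sums run over subsets of the \emph{index set} $\{1,\dots,n\}$ rather than over equivalence classes of sublists; repeated entries in $\underline e$ or $\underline a$ cause no difficulty at this stage.
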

%\end{theorem}

The proof of this theorem will be given in the next section.
We note that when all weights are equal to $1$, Theorem~\ref{t:3.1} is reduced to Proposition~\ref{p:2} as $\# (J) $ in the second summand must be equal to $1$.

\subsection{Structure of the generalized Virasoro relations} \label{s:2.4}
We will define the Virasoro operators $\{ L_{k;a}^{\A} \}_{k \geq -1}$ for a given $a\in \A$.
These Virasoro operators will form a more general Lie algebra.
Before we proceed, we first make some general remarks.

\begin{lemma} \label{l:3.2}
Assume that the differential operators $\{ L_k \}$ for $k \geq -1$ satisfy the Virasoro relations \eqref{e:vir}.
Let $\{ M_{k;a} \}$ for $k \geq -1$ and  $a \in \A$ be the differential operators satisfying the following relations
\begin{equation} \label{e:3.1}
%\begin{split}
[ M_{k_1; a_1} , M_{k_2; a_2} ] = 0, 
\qquad [L_{k_1}, M_{k_2; a}] = - (k_2+c ) M_{k_1+k_2; a}
%\qquad [L_{k_1}, L_{k_2}] = (k_1-k_2) L_{k_1+k_2}.
%\end{split}
\end{equation}
for a constant $c$ and for all $k_1, k_2, a, a_1, a_2$.
Let $\epsilon_a$ be any constants and define the deformed operators
\[
 L_{k; a} := L_k + \epsilon_a M_{k; a}.
\]
We have
\begin{equation} \label{e:3.2}
[ L_{k_1; a_1}, L_{k_2; a_2} ] = \left( k_1 + c \right) L_{k_1 + k_2; a_2} - \left( k_2 + c \right) L_{k_1 + k_2; a_1}
%(k_1-k_2) L_{k_1+k_2} + \frac{2k_1+1}{2}  M_{k_1+k_2; b} -  \frac{2k_2+1}{2}  M_{k_1+k_2; a},
\end{equation}
for all $k_1, k_2, a_1, a_2$.
In particular, for a fixed $a$, $\{ L_{k;a} \}_{k \geq -1}$ satisfy the ``usual'' Virasoro relations \eqref{e:vir}.
\end{lemma}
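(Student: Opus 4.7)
The plan is to compute $[L_{k_1;a_1}, L_{k_2;a_2}]$ directly, by expanding via bilinearity of the Lie bracket and then reassembling the result into the form on the right-hand side of \eqref{e:3.2}. First I would write
\[
[L_{k_1;a_1}, L_{k_2;a_2}] = [L_{k_1}, L_{k_2}] + \epsilon_{a_2}[L_{k_1}, M_{k_2;a_2}] + \epsilon_{a_1}[M_{k_1;a_1}, L_{k_2}] + \epsilon_{a_1}\epsilon_{a_2}[M_{k_1;a_1}, M_{k_2;a_2}],
\]
and then evaluate each of the four brackets using the given hypotheses: the Virasoro relation for the first, the module-type relation \eqref{e:3.1} for the second, antisymmetry of the bracket combined with \eqref{e:3.1} for the third, and the abelian relation for the fourth (which vanishes).

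Next I would collect the result as a linear combination of the three distinct basis elements $L_{k_1+k_2}$, $M_{k_1+k_2;a_1}$, $M_{k_1+k_2;a_2}$. At this stage one reads off coefficients. The $L$-coefficient is $k_1-k_2$, which equals $(k_1+c)-(k_2+c)$, so it splits as the $L$-parts of $(k_1+c)L_{k_1+k_2;a_2}$ and $-(k_2+c)L_{k_1+k_2;a_1}$. The coefficients of $M_{k_1+k_2;a_j}$ are then matched against the $M$-parts of the same two terms using $L_{k;a}=L_k+\epsilon_a M_{k;a}$. Rewriting the sum in this way yields \eqref{e:3.2}.

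The specialization to $a_1=a_2=a$ should then drop out automatically: both $M$-type contributions combine into a single $\epsilon_a(k_1-k_2)M_{k_1+k_2;a}$ and recombine with $(k_1-k_2)L_{k_1+k_2}$ to give $(k_1-k_2)L_{k_1+k_2;a}$, recovering the usual Virasoro relation \eqref{e:vir} for the family $\{L_{k;a}\}_{k\geq -1}$ at fixed $a$.

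There is no real obstacle here; the lemma is purely formal and the verification is a short bookkeeping exercise once the four elementary brackets are substituted. The only care needed is with signs when invoking antisymmetry in the $[M_{k_1;a_1}, L_{k_2}]$ term, and with tracking which of $a_1$, $a_2$ attaches to which $M$-generator after the commutator is expanded. The content of the lemma is structural rather than computational: it identifies the algebra generated by $\{L_{k;a}\}$ as a semi-direct product of the Virasoro Lie algebra with the abelian ideal spanned by $\{M_{k;a}\}$, which is exactly the framework invoked in Section~\ref{s:2.4} to describe $V_\A$.
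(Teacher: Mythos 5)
Your method is exactly the paper's (its proof is the one-line ``straightforward calculation''), and expanding the bracket into the four elementary pieces is the right bookkeeping. However, the one step you yourself flag as delicate --- tracking which of $a_1$, $a_2$ attaches to which $M$-generator --- is where the write-up does not close as claimed. Substituting the hypotheses gives
\[
[L_{k_1;a_1},L_{k_2;a_2}]=(k_1-k_2)L_{k_1+k_2}+(k_1+c)\,\epsilon_{a_1}M_{k_1+k_2;a_1}-(k_2+c)\,\epsilon_{a_2}M_{k_1+k_2;a_2},
\]
because the cross term $\epsilon_{a_1}[M_{k_1;a_1},L_{k_2}]=+(k_1+c)\,\epsilon_{a_1}M_{k_1+k_2;a_1}$ carries the weight $a_1$ together with the coefficient $(k_1+c)$. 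Reassembling with $L_{k;a}=L_k+\epsilon_a M_{k;a}$ therefore produces
\[
[L_{k_1;a_1},L_{k_2;a_2}]=(k_1+c)\,L_{k_1+k_2;a_1}-(k_2+c)\,L_{k_1+k_2;a_2},
\]
which is \eqref{e:3.2} with the roles of $a_1$ and $a_2$ interchanged on the right-hand side; the $M$-coefficients do \emph{not} match the formula as printed, so the matching step you assert would fail verbatim. The printed \eqref{e:3.2} appears to be a transposition typo: only the displayed version above is compatible with the deformation criterion of Corollary~\ref{c:2.6}, and the specialization $a_1=a_2=a$ (hence the recovery of \eqref{e:vir}) is unaffected either way. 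Your argument is the right one --- just record the corrected form of the identity rather than claiming the coefficients match \eqref{e:3.2} as stated.
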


\begin{proof}
Equation \eqref{e:3.2} follows from \eqref{e:3.1} by a straightforward calculation.
The last sentence is a simple observation.
\end{proof}

\begin{corollary} \label{c:2.6}
Suppose that $\{ L_{k;a} \}$ for $k \geq -1$ and $a \in \A$ satisfy the commutation relations \eqref{e:3.2}.
The ``deformed operators'' $\{ L_{k;a} + \epsilon_a M_{k;a} \}$ satisfy the same commutation relations if and only if
\begin{equation} \label{e:3.3}
[ M_{k_1; a_1} , M_{k_2; a_2} ] = 0, 
\qquad [L_{k_1; a_1}, M_{k_2; a_2}] = - (k_2+c ) M_{k_1+k_2; a_2}
\end{equation}
for all $k, a$.
\end{corollary}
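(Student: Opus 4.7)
My plan is to expand the commutator of the deformed operators bilinearly, subtract the $L$-level identity supplied by hypothesis, and then match coefficients in the scalar parameters $\epsilon_a$. Setting $\widetilde{L}_{k;a} := L_{k;a} + \epsilon_a M_{k;a}$, bilinearity of the bracket gives
\[
\begin{aligned}
[\widetilde{L}_{k_1;a_1},\widetilde{L}_{k_2;a_2}]
&= [L_{k_1;a_1},L_{k_2;a_2}] + \epsilon_{a_2}[L_{k_1;a_1},M_{k_2;a_2}] \\
&\quad + \epsilon_{a_1}[M_{k_1;a_1},L_{k_2;a_2}] + \epsilon_{a_1}\epsilon_{a_2}[M_{k_1;a_1},M_{k_2;a_2}],
\end{aligned}
\]
while the target $(k_1+c)\widetilde{L}_{k_1+k_2;a_2} - (k_2+c)\widetilde{L}_{k_1+k_2;a_1}$ splits into an $L$-part (prescribed by the already-assumed relation \eqref{e:3.2}) plus a linear-in-$\epsilon$ $M$-part. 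Cancelling the $L$-parts reduces the desired commutation relation for $\widetilde{L}$ to a polynomial identity in the $\epsilon_a$'s whose LHS contains only $M$-valued brackets and whose RHS contains prescribed $M$-monomials.

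For the \emph{if} direction I would substitute the three relations of \eqref{e:3.3} directly into this $M$-level residual identity: the quadratic $\epsilon_{a_1}\epsilon_{a_2}$ term is killed by $[M,M]=0$; the $\epsilon_{a_2}$-coefficient matches immediately from $[L_{k_1;a_1},M_{k_2;a_2}] = -(k_2+c) M_{k_1+k_2;a_2}$; and the $\epsilon_{a_1}$-coefficient follows by antisymmetry $[M_{k_1;a_1},L_{k_2;a_2}] = -[L_{k_2;a_2},M_{k_1;a_1}]$ combined with \eqref{e:3.3} applied with the index pairs $(k_1,a_1)\leftrightarrow(k_2,a_2)$ swapped.

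For the \emph{only if} direction I would read the residual identity as a polynomial in the formal parameters $\{\epsilon_a\}$: independently varying $\epsilon_{a_1}$ and $\epsilon_{a_2}$ (or switching them on and off, using that the claim is meant to hold uniformly in these scalars) isolates each of the three coefficients and forces the three equations of \eqref{e:3.3} one by one. The most delicate step is the index bookkeeping in the antisymmetry manipulation --- ensuring that the swapped application of \eqref{e:3.3} produces exactly the scalar $(k_2+c)$ attached to the weight $a_1$ on the RHS of the target --- and this is essentially the only obstacle; everything else is linear algebra in the $\epsilon$-variables.
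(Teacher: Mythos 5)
Your bilinear expansion is exactly the ``straightforward calculation'' the paper intends here (it offers no written proof of this corollary, only of the analogous Lemma~\ref{l:3.2}), and the overall plan is sound. Two concrete points, both living precisely in the index bookkeeping you yourself flag as the delicate step. First, the matching does \emph{not} come out ``immediately'' against \eqref{e:3.2} as printed: substituting \eqref{e:3.3} into the residual identity gives
\[
-(k_2+c)\,\epsilon_{a_2}M_{k_1+k_2;a_2}+(k_1+c)\,\epsilon_{a_1}M_{k_1+k_2;a_1},
\]
whereas the $M$-part of $(k_1+c)\widetilde{L}_{k_1+k_2;a_2}-(k_2+c)\widetilde{L}_{k_1+k_2;a_1}$ is
\[
(k_1+c)\,\epsilon_{a_2}M_{k_1+k_2;a_2}-(k_2+c)\,\epsilon_{a_1}M_{k_1+k_2;a_1}.
\]
These agree only if the right-hand side of \eqref{e:3.2} is read as $(k_1+c)L_{k_1+k_2;a_1}-(k_2+c)L_{k_1+k_2;a_2}$ --- which is what the hypotheses of Lemma~\ref{l:3.2} actually produce, so \eqref{e:3.2} as printed has $a_1$ and $a_2$ transposed on its right-hand side. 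You should carry out the coefficient match explicitly and state which convention you are using; as written, your ``if'' direction verifies a relation that is not literally \eqref{e:3.2}.

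Second, the ``only if'' direction by varying the $\epsilon_a$'s has a genuine gap on the diagonal $a_1=a_2=a$: there only the single parameter $\epsilon_a$ is available, and coefficient extraction in $\epsilon_a$ yields $[M_{k_1;a},M_{k_2;a}]=0$ together with only the antisymmetrized combination $[L_{k_1;a},M_{k_2;a}]-[L_{k_2;a},M_{k_1;a}]=(k_1-k_2)M_{k_1+k_2;a}$, which is strictly weaker than the second relation of \eqref{e:3.3} for that weight. Your method recovers \eqref{e:3.3} in full only when $a_1\neq a_2$. This looseness is arguably in the corollary's statement rather than in your argument --- only the ``if'' direction is used downstream, e.g.\ in Remark~\ref{r:3.4} --- but your write-up should not claim that the diagonal case of \eqref{e:3.3} follows from coefficient matching.
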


\begin{remark} \label{r:3.4}
We consider the vector space $V_{\A}$ generated by the operators 
%\[
$ \left\{ L_{k; a} \right\}_{k \geq -1, a\in \A} $,
%\] 
which forms a \emph{Lie algebra} with the commutation relations described in \eqref{e:3.2}.
This Lie algebra in particular contains "$\A$ copies" of the simple Lie algebras $\{ L_{k; a} \}_{k \geq -1, a\in \A}$, representing ``half'' of the Virasoro algebra. It has a proper abelian ideal $I_{\A}$ generated by $\{ L_{k; a_1} - L_{k; a_2}  \}_{k \geq -1, a_1,a_2\in \A}$, which satisfies the commutation relations in \eqref{e:3.3}. 
There is a short exact sequence
\[
 0 \to I_{\A} \to V_{\A} \to V \to 0,
\]
where $V$ is the ``usual'' centerless Virasoro algebra (with $k \geq -1$).
In other words, $V_{\A}$ is a semi-direct product of $V$ with an abelian Lie algebra $I_{\A}$.
It is well known that the highest-weight representations of $V_{\A}$ are all induced from those of $V$, cf.\ \cite{CK}. %See, e.g., \cite{CK}.
This suggests that the ``generalized'' Virasoro is still related to the KdV hierarchy (instead of its variants).
We will spell this out explicitly in Section~\ref{s:KdV}.
\end{remark}

\subsection{Virasoro operators for the weighted pointed curves}
\begin{lemma}\label{l:3.5}
Assume $1 \in \A$. Define the following differential operators
\[
\begin{split}
2L_{-1} 
&:= - \frac{ \partial }{ \partial t_0 } 
+ \sum_{ n \geq 1 } 
\frac{1}{n!}
\sum_{ \substack{ (\underline{e}, \underline{a}) \in (\mathbb{N} \times \A)^n \\ |\underline{a}| \leq 1,\ |\underline{e}|-n \geq 0 }  }
 t_{ \underline{e};  \underline{a} } 
\frac{ \partial } { \partial t_{ | \underline{e} | - n ;  | \underline{a} | } } 
\\
&\qquad\qquad + \frac{1}{2}
\sum_{m,n \geq 1} \frac{(-1)^{m+n}}{m!n!}
\sum_{ \substack{ (\underline{e}_1, \underline{a}_1) \in (\mathbb{N} \times \A )^m \\ |\underline{a}_1| \leq 1,\ |\underline{e}_1|-m+1 = 0 }  }
\sum_{ \substack{ (\underline{e}_2, \underline{a}_2) \in (\mathbb{N} \times \A )^n \\ |\underline{a}_2| \leq 1,\ |\underline{e}_2|-n+1 = 0 }  }
t_{\underline{e}_1;\underline{a}_1}
t_{\underline{e}_2; \underline{a}_2}
\\
2L_0 
& := -3 \frac{ \partial }{ \partial t_1 } 
+ \sum_{ n \geq 1 } \frac{1}{n!} 
\sum_{ \substack{ (\underline{e}, \underline{a}) \in (\mathbb{N} \times \A)^n \\ |\underline{a}| \leq 1,\ |\underline{e}|-n+1 \geq 0 }  }
h_{ 0; \underline{e} } \,
t_{ \underline{e};  \underline{a} } 
\frac{ \partial }{ \partial t_{ | \underline{e} | - n+1;  | \underline{a} | } } 
+ \frac{1}{8}, 
\\
2L_k 
& := -(2k+3)!! \frac{ \partial }{ \partial t_{k+1} } 
+ \sum_{ n \geq 1 } 
\frac{1}{n!} 
\sum_{ \substack{ (\underline{e}, \underline{a}) \in (\mathbb{N} \times \A )^n \\ |\underline{a}| \leq 1,\ |\underline{e}|-n+1+k \geq 0 }  }
h_{ k; \underline{e} } \, t_{ \underline{e};  \underline{a} } 
\frac{ \partial }{ \partial t_{ |\underline{e}| - n+1+k;  | \underline{a} | } } \\
 & \qquad +
\frac{1}{2}
\sum_{ \substack{ r+s=k-1 \\ r,s \geq 0 } }  (2r+1)!! (2s+1)!! 
\frac{ \partial^2 }{ \partial t_r \partial t_s }, \quad {\rm for} \ k \geq 1,
\end{split}
\]
as well as
\[
\begin{split}
\frac{2}{(2k+3)!!} & M_{k; b}  := - \frac{ \partial }{ \partial t_{k+1; b} }
 + \frac{\partial}{\partial t_{k+1}}  \\
- & \left[
\sum_{m \geq 1}  \frac{1}{m!}
\sum_{ \substack{ ( \underline{e} \times \underline{a} ) \in (\mathbb{N} \times \A )^m \\ |\underline{a}| + b \leq 1  , \ |\underline{e}| - m + k +1 \geq 0 }  }
 t_{ \underline{e}; \underline{a}  } 
\frac{\partial}{\partial t_{ k + |\underline{e}| - m + 1 ; |\underline{a}| +b }}
\right].
\end{split}
\]
These operators satisfy the Virasoro relations \eqref{e:vir} as well as the commutation relations \eqref{e:3.1} with $c= \frac{3}{2}$.
\end{lemma}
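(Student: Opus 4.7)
The plan is to verify the three algebraic relations required separately: (i) $[M_{k_1;a_1}, M_{k_2;a_2}] = 0$; (ii) $[L_{k_1}, M_{k_2;a_2}] = -(k_2 + \tfrac{3}{2}) M_{k_1+k_2;a_2}$; and (iii) the Virasoro relations \eqref{e:vir} for the $L_k$'s. As a sanity check, observe that when $\A = \{1\}$ the constraint $|\underline{a}| \leq 1$ forces $n=1$ in every sum and $h_{k;(e)}=h_{k;e}$, so the $L_k$'s collapse to the classical operators of Lemma \ref{l:5} and $M_{k;1}\equiv 0$ identically; the base case is then the known Witten--Kontsevich Virasoro algebra.

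For (i), each $M_{k;b}$ is a first-order operator consisting of two constant-coefficient derivatives plus a linear-in-$t$ sum of derivatives. A Leibniz expansion of $[M_{k_1;a_1},M_{k_2;a_2}]$ splits into four blocks: the constant--constant block vanishes automatically, and the other three, after grouping contributions by the underlying tuple $(\underline{e},\underline{a})$, are manifestly symmetric in the exchange $(k_1,a_1)\leftrightarrow(k_2,a_2)$; antisymmetry of the commutator forces the whole to vanish. This is mechanical bookkeeping.

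For (ii), expand $[L_{k_1}, M_{k_2;a_2}]$ and match with $-(k_2+\tfrac{3}{2})M_{k_1+k_2;a_2}$. The constant piece $-\partial/\partial t_{k_2+1;a_2}+\partial/\partial t_{k_2+1}$ of $M_{k_2;a_2}$, bracketed with the linear-in-$t$ part of $L_{k_1}$, yields $\partial/\partial t_{k_1+k_2+1;a_2}-\partial/\partial t_{k_1+k_2+1}$ with a coefficient involving $h_{k_1;k_2+1}$; using $h_{k_1;k_2+1} = (2(k_1+k_2)+3)!!/(2k_2+1)!!$ and the elementary identity $(2m+3)!!/(2m+1)!! = 2m+3$, one reads off the scalar $(2k_2+3)/2 = k_2+\tfrac{3}{2}$, as claimed. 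The shift-by-$a_2$ linear-in-$t$ piece of $M_{k_2;a_2}$ is handled analogously. The quadratic derivative term of $L_{k_1}$ (for $k_1\geq 1$) contributes nothing, since its $\partial_r\partial_s$ are with respect to weight-$1$ variables that do not appear in $M_{k_2;a_2}$'s shift-by-$a_2$ part.

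For (iii), I would follow the classical template of Lemma \ref{l:5} while carrying the weight decoration. The commutator $[L_{k_1},L_{k_2}]$ splits into three blocks: constant--constant (matched by a direct double-factorial identity), linear-in-$t$ (where the combined coefficient $h_{k_1;\underline{e}}h_{k_2;\underline{e}'} - h_{k_2;\underline{e}}h_{k_1;\underline{e}'}$ should telescope via Lemma \ref{l:h} into $(k_1-k_2)h_{k_1+k_2;\underline{e}}$), and quadratic--linear (where the second-order derivatives hit the linear coefficients of the other $L$ and reproduce the quadratic part of $L_{k_1+k_2}$). The main obstacle is the telescoping in the linear-in-$t$ block: the nested signed sum over sublists of sublists arising from two uses of Lemma \ref{l:h} must collapse to a single weighted sum with the coefficient $(k_1-k_2)$. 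I would package this cleanly via the effective-time substitution
\[
T_{m;a} := \sum_{n \geq 1} \frac{(-1)^{n-1}}{n!} \sum_{\substack{(\underline{e},\underline{a}') \in (\mathbb{N}\times\A)^n \\ |\underline{e}|-n+1=m,\ |\underline{a}'|=a}} t_{\underline{e};\underline{a}'},
\]
under which (by Lemma \ref{l:h}) the linear-in-$t$ part of $L_k$ takes the classical shape $\sum_{a,m} h_{k;m} T_{m;a}$ times an effective derivative, reducing the Virasoro algebra for the $L_k^{\A}$ to that of Lemma \ref{l:5} sector by weight $a$.
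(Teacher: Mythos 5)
Your overall decomposition into the three bracket relations $[M,M]=0$, $[L,M]=-(k_2+\tfrac32)M$, and $[L,L]=(k_1-k_2)L$ is exactly the paper's, and your treatments of $[M_{k_1;a_1},M_{k_2;a_2}]=0$ (symmetry of the coefficient in $(k_1,a_1)\leftrightarrow(k_2,a_2)$) and of the constant and quadratic blocks of $[L_{k_1},L_{k_2}]$ (pure weight-$1$, hence identical to Lemma~\ref{l:5}) match the paper. For the linear-in-$t$ block of $[L_{k_1},L_{k_2}]$ you propose something genuinely different: the paper proves head-on the $h$-identity
\[
2(k_1-k_2)\,\frac{h_{k_1+k_2;\underline{e}}}{\#\op{Aut}(\underline{e},\underline{a})}
=\sum_{[\underline{j}]}\frac{h_{k_1;\underline{j}}h_{k_2;k_1+|\underline{j}|-\#\underline{j}+1,\,\underline{j}^c}-(k_1\leftrightarrow k_2)}{\#\op{Aut}(\underline{j})\,\#\op{Aut}(\underline{j}^c)}
\]
(Lemma~\ref{l:4.3}) by a three-way classification of terms, whereas you pass to the effective times $T_{m;a}$ (which are precisely the paper's $\mathbf{t}_{m;a}$ from Section~\ref{s:KdV}) and invoke Lemma~\ref{l:h} to put the linear part of $L_k$ in the classical shape. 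That route is viable and arguably cleaner — it trades the three-type coefficient chase for the single rewriting identity — though you should note that the effective derivatives $\mathbf{v}_{m+k;a}$ shift weights, so the ``sector by weight'' reduction really rests on $\mathbf{v}_{j;a}(T_{m;a'})=\delta_{j,m}\delta_{a,a'}$ together with $[\mathbf{v},\mathbf{v}]=0$, both of which need (and admit) independent verification.

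The genuine gap is in relation (ii). You verify only the constant-coefficient output of $[L_{k_1},M_{k_2;b}]$ (the scalar $k_2+\tfrac32$ read off from $h_{k_1;k_2+1}=(2k_1+2k_2+3)!!/(2k_2+1)!!$ is correct), and then dismiss the rest as ``handled analogously.'' It is not analogous: the bracket of the linear-in-$t$ part of $L_{k_1}$ with the linear-in-$t$ part of $M_{k_2;b}$ produces, for each tuple $(\underline{e},\underline{a})$, a signed sum of products of $h$-values over sublists, which must collapse to the single constant $h_{k_2;k_1+1}=(2k_1+2k_2+3)!!/(2k_1+1)!!$ multiplying the linear part of $M_{k_1+k_2;b}$ (whose $t$-coefficients carry no $h$'s at all). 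This requires the nontrivial ``inductive'' identity
\[
h_{k;c,\underline{e}}=h_{k;c}+\sum_{\phi\neq J\subset\{1,\dots,n\}}\Bigl(h_{k;\underline{e}_J}-h_{k;\,c+|\underline{e}_J|-\#(J),\,\underline{e}_{J^c}}\Bigr),
\]
which is Lemma~\ref{2:h} in the paper and is proved there by its own coefficient comparison; nothing in your constant-block computation supplies it. Either prove this identity, or apply your own $T_{m;a}$ substitution consistently to $M_{k;b}=\tfrac{(2k+3)!!}{2}(\partial_{t_{k+1}}-\mathbf{v}_{k+1;b})$ as well, in which case $[L_{k_1},M_{k_2;b}]$ reduces to the single evaluation $\mathbf{v}_{k_2+1;b}(T_{m;a})=\delta_{m,k_2+1}\delta_{a,b}$ and the relation follows from the $n=1$ case of $h$. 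As written, the hardest combinatorial step of the lemma is asserted rather than proved.
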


The proof of this lemma is also deferred to the next section. 
We are now ready to spell out the Virasoro operator $L_{k;a}$ for $a \in \A$.
\begin{equation} \label{e:lka}
L_{k;a} := L_k + M_{k;a}.
\end{equation}

\begin{corollary} \label{c:2.9}
Assume $1 \in \A$. 
$L_{k;a}$ satisfy the commutation relations \eqref{e:3.2} with $c= \frac{3}{2}$.
\end{corollary}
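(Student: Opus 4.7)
The statement is essentially a formal consequence of Lemmas 3.2 and 3.5, combined with the definition \eqref{e:lka}. So the plan is to carry out this short two-step chain, paying attention only to the signs/coefficients in the bracket expansion.

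First I would invoke Lemma~\ref{l:3.5} (whose verification is postponed to Section~\ref{s:3}) to extract precisely the input needed for Lemma~\ref{l:3.2}: namely, that the operators $\{L_k\}_{k\geq -1}$ satisfy the centerless Virasoro relations \eqref{e:vir}, and that $\{M_{k;a}\}_{k\geq -1, a\in\A}$ together with the $L_k$ satisfy the compatibility relations \eqref{e:3.1} with the specific value $c=\tfrac32$. Note that Lemma~\ref{l:3.5} is stated precisely for these two facts, so nothing further is needed at this stage beyond reading off the constant $c=\tfrac32$ from the coefficient $(2k+3)!!$ appearing in the definition of $M_{k;b}$.

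Next, I would apply Lemma~\ref{l:3.2} with the choice of constants $\epsilon_a=1$ for every $a\in\A$. By definition \eqref{e:lka}, this specialization produces exactly the operators $L_{k;a}=L_k+M_{k;a}$ of the corollary. Lemma~\ref{l:3.2} then asserts that these satisfy \eqref{e:3.2} with the same $c=\tfrac32$, which is the content of the corollary.

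The only honest calculation to check (if one wants to be self-contained rather than simply quote Lemma~\ref{l:3.2}) is the bilinear expansion
\[
[L_{k_1}+M_{k_1;a_1},\,L_{k_2}+M_{k_2;a_2}]=[L_{k_1},L_{k_2}]+[L_{k_1},M_{k_2;a_2}]+[M_{k_1;a_1},L_{k_2}]+[M_{k_1;a_1},M_{k_2;a_2}],
\]
using \eqref{e:vir} for the first term, the two halves of \eqref{e:3.1} for the middle terms (with a sign flip $[M_{k_1;a_1},L_{k_2}]=(k_1+c)M_{k_1+k_2;a_1}$), and vanishing of the last term. Regrouping the resulting $L_{k_1+k_2}$ and $M_{k_1+k_2;a_i}$ pieces into $L_{k_1+k_2;a_1}$ and $L_{k_1+k_2;a_2}$ reproduces \eqref{e:3.2}.

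There is no real obstacle here since the genuine combinatorial content is placed entirely inside Lemma~\ref{l:3.5}: the commutation relations among the $M_{k;a}$ and the adjoint action of $L_k$ on them must be checked by direct (but nontrivial) manipulation of the differential operators defined there, and this is where the deferred proof in Section~\ref{s:3} earns its keep. Given those ingredients, the corollary itself is a purely algebraic deformation argument.
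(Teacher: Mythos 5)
Your proof is correct and is exactly the argument the paper gives: Corollary~\ref{c:2.9} is the specialization of Lemma~\ref{l:3.2} (with $\epsilon_a=1$) to the operators of Lemma~\ref{l:3.5}, which supplies \eqref{e:vir} and \eqref{e:3.1} with $c=\tfrac{3}{2}$. Your explicit bracket expansion is just the one-line verification underlying Lemma~\ref{l:3.2}, so nothing is missing.
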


\begin{proof}
The statement is an immediate consequence of Lemma~\ref{l:3.2}. The commutation relations needed is stated in Lemma~\ref{l:3.5}.
\end{proof}

When $1 \notin \A$, we wish to define the Virasoro operators $L^{\A}_{k;a}$ similar to $L_{k;a}$ in \eqref{e:lka}, 
but with the condition that $L^{\A}_{k;a}$ should involve only $t_{l;b}$ for $b \in \A$, i.e., only the variables in the phase space $\mathbf{H}^{\A}$.
We note that $L_{k}$ and $M_{k;a}$ both involve differentiation with respect to $t_k$ when the weight $1$ might not be in $\A$.

%If $1 \in \A$, then nothing needs to be done. Assume that $1 \notin \A$. 
Now assume $1 \notin \A$ and let $\bar{\A} := \A \cup \{ 1 \}$.
Rephrasing Corollary~\ref{c:1}, we have
\begin{equation} \label{e:2.7}
 M_{k;a} F^{\bar{\A}} (t) =0.
\end{equation}
We will now introduce a new coordinate system $\{ \mathbf{t}_{k;a} \}$ on the phase space.
Let
\begin{equation} \label{e:2.8}
  \begin{split}
%\partial_{\mathbf{t}_{k+1;b}} 
 \mathbf{v}_{k+1;b} := \frac{\partial}{\partial \mathbf{t}_{k+1;b}} := \frac{ \partial }{ \partial t_{k+1; b} }
+ \left[
\sum_{m \geq 1}  \frac{1}{m!}
\sum_{ \substack{ ( \underline{e} \times \underline{a} ) \in (\mathbb{N} \times \A )^m \\ |\underline{a}| + b \leq 1  , \ |\underline{e}| - m + k +1 \geq 0 }  }
 t_{ \underline{e}; \underline{a}  } 
\frac{\partial}{\partial t_{ k + |\underline{e}| - m + 1 ; |\underline{a}| +b }}
\right].
\end{split}
\end{equation}

\begin{lemma} \label{l:4.1}
The formal vector fields $\{  \mathbf{v}_{k;a} \}$ commute pairwise
\[
 \left[  \mathbf{v}_{k;a} , \mathbf{v}_{k';a} \right] =0, \quad \forall k, k' \geq -1,
\]
for any given weight $a$. 
\end{lemma}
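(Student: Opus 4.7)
The plan is to encode the action of the vector field $\mathbf{v}_{k+1;a}$ through its action on a single generating series, and then to use the Leibniz rule. Introduce the formal series $T(z, \zeta) := \sum_{(l,c) \in \mathbb{N} \times \A} t_{l;c} z^l \zeta^c$. The first step is to verify, directly from \eqref{e:2.8}, the closed-form identity
\[
\mathbf{v}_{k+1;a}\, T(z, \zeta) = \zeta^a z^{k+1} \exp\!\Bigl( \frac{T(z, \zeta)}{z} \Bigr),
\]
read as a formal expansion from which one retains only the monomials whose $z$-exponent is a non-negative integer and whose $\zeta$-exponent lies in $\A$. Unpacking $\exp(T/z) = \sum_{m \geq 0}(T/z)^m/m!$ and matching coefficients, the $m=0$ term reproduces the $\partial/\partial t_{k+1;a}$ piece, while the $m \geq 1$ terms reproduce the correction sum in the definition; the $\zeta$-truncation to $\A$ encodes precisely the constraint $|\underline{a}| + a \leq 1$ appearing there.

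The second step is to apply $\mathbf{v}_{k_2+1;a}$ to this identity using the Leibniz rule for derivations and then the identity once more (with $k_1$ replaced by $k_2$) to evaluate $\mathbf{v}_{k_2+1;a}(T)$:
\[
\mathbf{v}_{k_2+1;a}\, \mathbf{v}_{k_1+1;a}\, T(z, \zeta)
= \zeta^a z^{k_1+1} e^{T(z,\zeta)/z} \cdot \frac{\mathbf{v}_{k_2+1;a}\, T(z,\zeta)}{z}
= \zeta^{2a} z^{k_1+k_2+1} e^{2 T(z, \zeta)/z}.
\]
The right-hand side depends on $k_1, k_2$ only through the symmetric combination $k_1+k_2$, so $[\mathbf{v}_{k_1+1;a}, \mathbf{v}_{k_2+1;a}]\, T(z, \zeta) = 0$. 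Extracting the coefficient of each $z^l \zeta^c$ then yields $[\mathbf{v}_{k_1+1;a}, \mathbf{v}_{k_2+1;a}]\, t_{l;c} = 0$ for every generator $t_{l;c}$ of the polynomial ring on the phase space. Since the commutator of two derivations is a derivation and it annihilates every generator, it is identically zero, proving the lemma.

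The step that will require the most care is justifying the formal Leibniz computation: the series $\exp(T/z)$ carries $z$-exponents below zero and $\zeta$-exponents outside $\A$, so one must argue that the truncation to $(l,c) \in \mathbb{N}\times\A$ is compatible with the chain rule. The key point is that each $\mathbf{v}_{k+1;a}$ is a derivation of the polynomial ring in the $t_{l;c}$'s that acts coefficient-wise on formal $(z, \zeta)$-series; moreover, since $\A \subset [0^+,1]$, any contribution dropped by the $\zeta$-truncation of the intermediate series $\mathbf{v}_{k_2+1;a}(T)$ has $\zeta$-exponent $>1$, and multiplying by $\zeta^a z^{k_1+1} e^{T/z}$ (whose $\zeta$-exponents are $\geq 0$) produces only monomials with $\zeta$-exponent $>1$ in the final product, which are truncated anyway. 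Thus the symmetric closed form above accurately represents $\mathbf{v}_{k_2+1;a}\mathbf{v}_{k_1+1;a}$ after restriction to $\mathbb{N}\times\A$, and the claimed commutativity follows.
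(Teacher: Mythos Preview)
Your generating-function approach is attractive, and the first identity (that $\mathbf{v}_{k+1;a}\,T(z,\zeta)$ equals the truncation of $\zeta^a z^{k+1}\exp(T/z)$ to exponents in $\mathbb{N}\times\A$) is correct. This is also a genuinely different route from the paper, which proves the lemma in one line by writing $\mathbf{v}_{k+1;a}=\partial_{t_{k+1}}-\tfrac{2}{(2k+3)!!}M_{k;a}$ and invoking the already-established relation $[M_{k_1;a},M_{k_2;a}]=0$ from Lemma~\ref{l:3.5}, together with $[\partial_{t_{k_1}},\mathbf{v}_{k_2;a}]=0$ (the coefficients of $\mathbf{v}_{k_2;a}$ involve no weight-$1$ variables).

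However, there is a genuine gap in your second step, and it lies precisely where you flagged the need for care. You argue that the $\zeta$-truncation of the intermediate factor $\mathbf{v}_{k_2+1;a}(T)$ can be dropped: any term removed there has $\zeta$-exponent $>1$, and multiplication by the outer factor (whose $\zeta$-exponents are $\ge a>0$) keeps it $>1$, so it is discarded anyway. That part is fine. But the same reasoning \emph{fails} for the $z$-truncation, and you do not address it. A term of $\zeta^a z^{k_2+1}e^{T/z}$ dropped for having negative $z$-exponent can, after multiplication by a term of $\zeta^a z^{k_1}e^{T/z}$ with large enough positive $z$-exponent, land back in $z^{\ge 0}$ and survive the outer truncation. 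Concretely, take $a=0^+$ (so the $\zeta$-bookkeeping is vacuous) and $k_1=1$, $k_2=0$: the inner series $z\,e^{T/z}$ contains the dropped term $\tfrac{1}{2}t_{0;0^+}^{2}\,z^{-1}$ (from $m=2$, $\underline{e}=(0,0)$), and multiplying by the $m=0$ term $z^{1}$ of the outer factor $z^{k_1}e^{T/z}=z\,e^{T/z}$ produces $\tfrac{1}{2}t_{0;0^+}^{2}\,z^{0}$, which does survive. So your displayed closed form $\zeta^{2a}z^{k_1+k_2+1}e^{2T/z}$ is not equal to $\mathbf{v}_{k_2+1;a}\mathbf{v}_{k_1+1;a}T$ after restriction, and the symmetry argument as written does not go through. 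The $z$-truncation encodes exactly the constraint $|\underline{e}|-m+k+1\ge 0$ in~\eqref{e:2.8}, and unlike the weight constraint it is not absorbed by the outer truncation.
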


\begin{proof}
This follows immediately from \eqref{e:3.3},  $[ M_{k_1; a} , M_{k_2; a} ] = 0$ and the observations that
\[
  M_{k;a} = \frac{\partial}{\partial t_{k+1}} - \mathbf{v}_{k+1;a}, \quad [\partial_{t_{k_1}} , \mathbf{v}_{k_2;a} ]=0.
\]
\end{proof}

\begin{remark} \label{r:2.12}
Lemma~\ref{l:4.1} can be interpreted as follows. 
The pairwise-commuting vector fields $\{ \mathbf{v}_{k;a}  \}$ formally integrate to 
a formal coordinate system $\{ \mathbf{t}_{k;a} \}$ on the phase space $\mathbf{H}$  (defined in Section~\ref{s:1.2}).
In other words, we are (formally) changing the coordinate systems from $\{ {t}_{k;a} \}$ to $\{ \mathbf{t}_{k;a} \}$
and $\{ \mathbf{v}_{k;a}  \}$ are the new coordinate vector fields.
Note also that 
$\mathbf{t}_{k;1} = t_{k;1}$ 
by definition.
\end{remark}

\begin{definition}
Let $a \in \A$.
The Virasoro operators $L_{k;a}^{\A}$ are defined as follows:
\begin{enumerate}
\item If $1 \in \A$, then 
$L_{k;a}^{\A} := L_{k;a} = L_k + M_{k;a}$.
\item If $1 \notin \A$, $L_{k;a}^{\A}$ is defined by replacing all $\frac{\partial}{\partial t_m}$ by $\frac{\partial}{\partial \mathbf{t}_{m;a}}$ in the operator $L_{k;a}$ and then set $t_{l;1} =0$, for $l \in \mathbb{N}$.
\end{enumerate}
\end{definition}

We note that in the above definition, $L_{k;a}^{\A}$ depends indeed only on $t_{l;b}$ for $b \in \A$,
due to the fact that $\mathbf{t}_{m;a}$ depend only on $t_{l;b}$ for $b \in \A$.
That last claim can be verified by the definition of $\mathbf{t}_{m;b}$ in \eqref{e:2.8} and the additively closed assumption on $\mathcal{A}$.

\begin{theorem} \label{t:2.13}
The generating function $F^{\A} (t)$ of the weighted pointed correlators satisfies the (generalized) Virasoro constraint
\begin{equation} \label{e:2.9}
L^{\A}_{k; a} (e^{F^{\A}}) =0,
\end{equation}
for any $k \geq -1$ and $a\in \A \subset [0^+,1]$.
The Virasoro constraints uniquely determine the generating function up to the initial conditions in Theorem~\ref{t:3.1}.
\end{theorem}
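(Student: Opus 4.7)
The plan is to split the proof by whether $1 \in \A$. \textbf{First case} ($1 \in \A$): here $L^{\A}_{k;a}=L_{k;a}=L_k+M_{k;a}$. I would expand $L_{k;a}(e^{F^{\A}})/e^{F^{\A}}$ as a formal series in the $t$-variables, extract the coefficient of $\frac{1}{n!}\,t_{e_1;a_1}\cdots t_{e_n;a_n}$ (with the distinguished weight $b\in\A$ coming from $\partial_{t_{k+1;b}}$ inside $M_{k;b}$), and match the result term-by-term against the recursion of Theorem~\ref{t:3.1}. The contributions sort into three groups: the two leading $-(2k+3)!!$ derivatives inside $L_k$ and $M_{k;b}$ combine to produce the correlator $\langle\tau_{k+1;b}\tau_{\underline{e};\underline{a}}\rangle$ on the left-hand side of the recursion; the linear-in-$t$ piece of $L_k$ and the weight-combining ``sum'' piece of $M_{k;b}$ supply, respectively, the $h_{k;\underline{e}_J}$-sum over subsets $J$ and the negative $\sum_{\phi\neq I}$ over subsets $I$ in Theorem~\ref{t:3.1}; and the quadratic $\partial_{t_r}\partial_{t_s}$ piece in $L_k$ produces the loop and disconnected contributions in the second line of the recursion. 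Invoking Theorem~\ref{t:3.1} then yields $L_{k;a}(e^{F^{\A}})=0$.

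\textbf{Second case} ($1 \notin \A$): set $\bar{\A}:=\A\cup\{1\}$. By the first case applied to $\bar{\A}$, $L_{k;a}(e^{F^{\bar{\A}}})=0$ on $\mathbf{H}^{\bar{\A}}$. Equation~\eqref{e:2.7} combined with $M_{k;a}=\tfrac{(2k+3)!!}{2}(\partial_{t_{k+1}}-\mathbf{v}_{k+1;a})$ gives $\partial_{t_m}F^{\bar{\A}}=\mathbf{v}_{m;a}F^{\bar{\A}}$ for every $m$. I would then replace every weight-$1$ derivative $\partial_{t_m}$ in $L_{k;a}$ by $\mathbf{v}_{m;a}=\partial_{\mathbf{t}_{m;a}}$. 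For first-order terms this is immediate; for the quadratic $\partial_{t_r}\partial_{t_s}$, additive closure of $\A$ together with $1\notin\A$ forces every weight sum $c+a\leq 1$ appearing in the defining formula of $\mathbf{v}_{m;a}$ to be strictly less than $1$, so $\mathbf{v}_{m;a}$ contains no $t_{l;1}$ variables and hence commutes with $\partial_{t_s}$; Lemma~\ref{l:4.1} then yields $\mathbf{v}_{r;a}\mathbf{v}_{s;a}e^{F^{\bar{\A}}}=\partial_{t_r}\partial_{t_s}e^{F^{\bar{\A}}}$. The substituted operator has no $\partial_{t_{l;1}}$, so restricting $t_{l;1}=0$ commutes with it; combined with $F^{\A}=F^{\bar{\A}}|_{t_{l;1}=0}$ this delivers $L^{\A}_{k;a}(e^{F^{\A}})=0$. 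Uniqueness is inherited from Theorem~\ref{t:3.1}: the Virasoro equations unpack to the same recursion, which together with the listed initial conditions pins down $F^{\A}$.

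The main obstacle is the coefficient bookkeeping in the first case. The linear part of $L_k$ is an unordered sum over lists $(\underline{e},\underline{a})$ with the single constraint $|\underline{a}|\leq 1$, whereas the $h$-sum in Theorem~\ref{t:3.1} is organized by subsets $J\subset\{1,\ldots,n\}$; reconciling the $1/n!$ symmetry factor with the choices of $J$ will require Lemma~\ref{l:h}, which expresses $h_{k;\underline{e}_J}$ precisely as the alternating sub-sublist sum that emerges from differentiating $e^{F^{\A}}$. The $+\partial_{t_{k+1}}$ correction inside $M_{k;a}$ is equally essential: it exactly cancels the spurious contribution coming from the empty subset in the $L_k$ expansion, leaving the negative sum $\sum_{\phi\neq I}$ demanded by Theorem~\ref{t:3.1}.
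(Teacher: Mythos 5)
Your proposal is correct and follows essentially the same route as the paper: for $1\in\A$ the constraint $L_{k;a}(e^{F^{\A}})=0$ is verified as the operator reformulation of the recursion in Theorem~\ref{t:3.1}, and for $1\notin\A$ one passes to $\bar{\A}=\A\cup\{1\}$, uses $M_{k;a}(e^{F^{\bar{\A}}})=0$ to trade $\partial_{t_m}$ for $\partial_{\mathbf{t}_{m;a}}$, and restricts to $\{t_{l;1}=0\}$. Your extra care with the quadratic term (commuting $\mathbf{v}_{r;a}$ past $\partial_{t_s}$ via additive closure) and the coefficient bookkeeping via Lemma~\ref{l:h} fills in details the paper leaves implicit.
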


\begin{proof}
When $1 \in \A$ and hence $\bar{\A} =\A$, the equation $L_{k;a}^{\mathcal{A}} (\exp F^{\mathcal{A}})=0$ is the operator reformalism of Theorem~\ref{t:3.1}.

Otherwise, assume that $1 \notin \A$. We define $\bar{\A} = \A \cup \{ 1 \}$. One stiill has $L^{\bar{\A}}_{k; a} (\exp F^{\bar{\A}} ) =0$.
Now we apply the trick for a given $a \in \A$: Since differentiation of  with respect to $t_k$ is the same as differentiation with respect to $\mathbf{t}_{k;a}$ (in the sense of Remark~\ref{r:2.12}), we replace all $\frac{\partial}{\partial t_l}$ by $\frac{\partial}{\partial \mathbf{t}_{l;a}}$ in the operators $L_{k;a}^{\bar{\A}}$. This gives the equation 
$$\Big(L_{k;a}^{\A} + \sum_{m} t_m \frac{\partial}{\partial \mathbf{t}_{m+k;a}}\Big)(\exp F^{\bar{\A}})=0.$$ 
Note that the operator $L_{k;a}^{\A} + \sum_{m} t_m \frac{\partial}{\partial \mathbf{t}_{m+k;a}}$ does not involve the derivative with respect to weight $1$ variable. We can restrict the above equation in the subspace $\{t_{l;1}=0\}_{l\in\mathbb{Z}}$ of the phase space. The equation reduces to
\[
L_{k;a}^{\A}(\exp F^{\A})=0,
\]
which is the desired \eqref{e:2.9}.
This completes the proof.
\end{proof}

The following corollary follows from the definition of $L_{k;a}^{\A}$ and Corollary~\ref{c:2.6}.

\begin{corollary}
$L_{k;a}^{\A}$ defined above satisfy the same commutation relation \eqref{e:3.2}.
\end{corollary}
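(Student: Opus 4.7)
My plan follows the two cases of the definition of $L_{k;a}^{\A}$.

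\textbf{Case $1 \in \A$.} By definition $L_{k;a}^{\A} = L_{k;a} = L_k + M_{k;a}$, and the claim is precisely Corollary~\ref{c:2.9}.

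\textbf{Case $1 \notin \A$.} Set $\bar{\A} := \A \cup \{1\}$; applying Case~1 to $\bar{\A}$ yields the commutation relations~\eqref{e:3.2} for $\{L_{k;a}^{\bar{\A}}\} = \{L_k + M_{k;a}\}_{a \in \bar{\A}}$. Following the hint, my plan is to apply Corollary~\ref{c:2.6} by producing a decomposition $L_{k;a}^{\A} = L_k^{\A} + M_{k;a}^{\A}$ in which $\{L_k^{\A}\}$ obeys the Virasoro relations~\eqref{e:vir} and $\{M_{k;a}^{\A}\}$ obeys~\eqref{e:3.3} with $c = \tfrac{3}{2}$.

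The natural candidates come from a uniform substitution: fix an auxiliary $a_0 \in \A$ and let $\sigma_{a_0}: \partial/\partial t_l \mapsto \partial/\partial \mathbf{t}_{l;a_0}$. By Lemma~\ref{l:4.1} and Remark~\ref{r:2.12}, $\sigma_{a_0}$ is a formal change of coordinates on $\mathbf{H}^{\bar{\A}}$, hence a Lie algebra homomorphism on the operator algebra; moreover $\sigma_{a_0}(L_k)$ and $\sigma_{a_0}(M_{k;a})$ preserve the ideal generated by $\{t_{l;1}\}$, so restriction to $\{t_{l;1}=0\}$ commutes with brackets. Setting $L_k^{\A} := \sigma_{a_0}(L_k)|_{t_{l;1}=0}$ and $M_{k;a}^{\A} := \sigma_{a_0}(M_{k;a})|_{t_{l;1}=0}$, the relations~\eqref{e:vir} and~\eqref{e:3.1} from Lemma~\ref{l:3.5} descend intact to the analogous relations for $\{L_k^{\A}\}$ and $\{M_{k;a}^{\A}\}$, and Corollary~\ref{c:2.6} then concludes~\eqref{e:3.2} for $L_k^{\A} + M_{k;a}^{\A}$.

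\textbf{Main obstacle.} The delicate step is the identification $L_{k;a}^{\A} = L_k^{\A} + M_{k;a}^{\A}$, which must reconcile the paper's $a$-dependent substitution $\sigma_a$ (used to define $L_{k;a}^{\A}$) with the uniform substitution $\sigma_{a_0}$ (used to define the decomposition). The key simplifying identity is $\sigma_a(M_{k;a}) = 0$, arising from the cancellation between the sum inside $M_{k;a}$ and the sum inside $\partial/\partial \mathbf{t}_{k+1;a}$; this forces $L_{k;a}^{\A} = \sigma_a(L_k)|_{t_{l;1}=0}$ and yields the compact form $2L_{k;a}^{\A} = -(2k+3)!!\, \mathbf{v}_{k+1;a} + \tfrac{1}{2}\sum_{r+s=k-1}(2r+1)!!(2s+1)!!\, \mathbf{v}_{r;a}\mathbf{v}_{s;a}$ for $k \geq 1$. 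The remaining reconciliation amounts to controlling the second-order residual $\tfrac{1}{4}\sum(2r+1)!!(2s+1)!!(\mathbf{v}_{r;a}\mathbf{v}_{s;a} - \mathbf{v}_{r;a_0}\mathbf{v}_{s;a_0})$---either by refining the decomposition so it is absorbed into a modified $M_{k;a}^{\A}$ that still satisfies~\eqref{e:3.3}, or by showing the discrepancy drops out of all the brackets arising in~\eqref{e:3.2}.
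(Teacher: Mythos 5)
Your Case $1 \in \A$ is fine and is exactly what the paper intends (the paper's own justification for this corollary is essentially one line: it follows from the definition of $L_{k;a}^{\A}$ together with Corollary~\ref{c:2.6}). The problem is Case $1 \notin \A$: your argument does not close. The entire content of the corollary in that case is precisely the identification you defer to the ``main obstacle'' paragraph, namely that the $a$-dependent substitutions $\sigma_a$ (one for each weight appearing in the bracket $[L^{\A}_{k_1;a_1},L^{\A}_{k_2;a_2}]$, with $a_1\neq a_2$) can be reconciled with a single decomposition $L_k^{\A}+M_{k;a}^{\A}$ to which Corollary~\ref{c:2.6} applies. You name two possible ways to do this (absorb the residual into a modified $M_{k;a}^{\A}$ still satisfying \eqref{e:3.3}, or show it drops out of all brackets) but execute neither, so what you have is a proof strategy, not a proof. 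This is a genuine gap, not a cosmetic one: a single $\sigma_{a_0}$ is (formally) a change of coordinates and hence transports the relations of Lemma~\ref{l:3.5}, but the operators $L^{\A}_{k;a}$ for different $a$ are built from \emph{different} coordinate changes, and it is exactly the interaction between $\sigma_{a_1}$ and $\sigma_{a_2}$ that must be controlled.

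There is also an error in the partial computation you do offer. From $2M_{k;a}=(2k+3)!!\bigl(\partial/\partial t_{k+1}-\mathbf{v}_{k+1;a}\bigr)$ one gets
\[
2L_{k;a} \;=\; -(2k+3)!!\,\mathbf{v}_{k+1;a}
\;+\;\sum_{n\geq 1}\frac{1}{n!}\sum_{\underline{e},\underline{a}} h_{k;\underline{e}}\,t_{\underline{e};\underline{a}}\,
\frac{\partial}{\partial t_{|\underline{e}|-n+1+k;\,|\underline{a}|}}
\;+\;\frac{1}{2}\sum_{r+s=k-1}(2r+1)!!(2s+1)!!\,\frac{\partial^2}{\partial t_r\,\partial t_s},
\]
so your ``compact form'' for $2L_{k;a}^{\A}$ is missing the $h$-weighted linear term of $L_k$, which does not vanish and is not absorbed into $\mathbf{v}_{k+1;a}$ (whose tail has coefficient $1$, not $h_{k;\underline{e}}$). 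To repair the argument you should either verify directly that applying the single substitution $\sigma_a$ to all three operators in a relation of the form \eqref{e:3.2} (for the $\bar{\A}$-theory) and then restricting to $\{t_{l;1}=0\}$ preserves the bracket --- which requires checking that the restriction is compatible with commutators, i.e., that no term of the bracket involves $\partial/\partial t_{l;1}$ acting on a surviving coefficient --- or compute the structure constants of $\{L^{\A}_{k;a}\}$ directly from the coefficient identities of Section~\ref{s:3}, as is done there for $\{L_k\}$ and $\{M_{k;a}\}$.
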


\section{Proofs of Theorem~\ref{t:3.1} and Lemma~\ref{l:3.5}} \label{s:3}

This section, whose contents consist of detailed checking of the assertions in the previous section, can be safely skipped for impatient readers.
We use the following notation in this section: $\op{Coeff} (M,P)$ stands for the coefficient of the monomial $M$ in the (differential) polynomial $P$.

 \subsection{Proof of Theorem~\ref{t:3.1}}
 
 \begin{lemma}
 The special case $b=1$ in Theorem~\ref{t:3.1} implies the case of general $b$
 \end{lemma}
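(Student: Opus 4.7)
The plan is to observe that the only $b$-dependence in Theorem~\ref{t:3.1} sits in the first summation, and that Corollary~\ref{c:1} encodes precisely this $b$-dependence. So the lemma reduces to bookkeeping: match one formula against the other.

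More concretely, first note that the large brace $\{\cdots\}$ on the right-hand side of Theorem~\ref{t:3.1} does not mention $b$ at all; only the first summation depends on $b$. Second, when $b=1$, the constraint $b+|\underline{a}_I|\leq 1$ together with the strict positivity $a_i\geq 0^+$ forces $|\underline{a}_I|=0$, i.e.\ $I=\emptyset$; since the first summation is taken over $\emptyset\neq I$, it is empty. Hence the $b=1$ case of Theorem~\ref{t:3.1} asserts exactly
\[
\langle \tau_{k+1}\, \tau_{\underline{e};\, \underline{a}} \rangle_{g,n+1}
= \frac{1}{(2k+3)!!}\{\cdots\}.
\]
(If $1\notin\A$, one temporarily enlarges $\A$ to $\bar\A:=\A\cup\{1\}$ so that $b=1$ is an admissible weight; this does not affect the identity, since the $\{\cdots\}$ expression only involves weights from $\A$ appearing in $\underline{a}$, together with the splitting over $r+s=k-1$ for weight-$1$ markings already present in the formula.)

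Now recall Corollary~\ref{c:1}, which states that for arbitrary $b\in\A$,
\[
\langle \tau_{k+1;b}\, \tau_{\underline{e};\,\underline{a}} \rangle
- \langle \tau_{k+1}\, \tau_{\underline{e};\,\underline{a}} \rangle
= -\sum_{\substack{\emptyset \neq I \subset \{1,\dots,n\} \\ b+|\underline{a}_I|\leq 1}}
\langle \tau_{k+1+|\underline{e}_I|-|I|;\, b+|\underline{a}_I|}\, \tau_{\underline{e}_{I^c};\, \underline{a}_{I^c}} \rangle.
\]
The right-hand side is (the negative of) the first summation in Theorem~\ref{t:3.1}. Substituting the $b=1$ identity displayed above for $\langle\tau_{k+1}\tau_{\underline{e};\underline{a}}\rangle$ yields the statement of Theorem~\ref{t:3.1} for general $b$.

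There is no real obstacle; the argument is a one-line algebraic manipulation once one recognizes the two features above. The only small subtlety is handling the case $1\notin\A$, which is resolved by working inside $\bar\A:=\A\cup\{1\}$ and observing that both sides of the resulting identity involve only variables in $\A$.
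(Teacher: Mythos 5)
Your proof is correct and follows essentially the same route as the paper: the paper likewise rewrites $\langle \tau_{k+1;b}\tau_{\underline{e};\underline{a}}\rangle$ as $\langle \tau_{k+1}\tau_{\underline{e};\underline{a}}\rangle$ minus the sum over nonempty $I$ with $b+|\underline{a}_I|\leq 1$ (it cites Lemma~\ref{l:4}, but the displayed identity is exactly Corollary~\ref{c:1}), and then applies the $b=1$ case to the first term. Your additional observations — that the first summation is empty when $b=1$ because $a_i\geq 0^+$, and the remark on $1\notin\A$ — are correct and only make explicit what the paper leaves implicit.
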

 
 \begin{proof}
 Apply  Lemma~\ref{l:4} to the LHS, one gets 
\[
\begin{split}
 &\langle  \tau_{k+1;  b}  \tau_{ \underline{e}; \, \underline{a} }  \rangle_{g, n+1} \\
= &\langle \tau_{k+1}  \tau_{ \underline{e}; \, \underline{a} }  \rangle_{g, n+1}
-  \sum_{ \substack{  \phi \neq I \subset \{1,\dots, n\}   \\   b + | \underline{a}_I |  \leq 1 }  }
\langle
\tau_{ k+1+| \underline{e}_I | - |I|  ; \, b + | \underline{a}_I | } \tau_{ \underline{e}_{I^c} ;\, \underline{a}_{I^c} }
\rangle_{g, n+1- \# (I) } .
 \end{split}
\]
Apply the special case of $b=1$ to the first term, we get exactly the statement of Theorem~\ref{t:3.1}.
\end{proof}
 
 Now we are left to check the case $b=1$.
 For our convenience, we rewrite this case as the following proposition.

\begin{proposition}
For $k \geq -1$, $\underline{e} \in \mathbb{N}^n$, and $\underline{a} \in \A^n$, we have
\begin{equation} \label{eq:3.1}
\begin{split}
& \langle   \tau_{k+1} \tau_{ \underline{e};\, \underline{a} }  \rangle_{ g, n+1 } \\
= 
&
\frac{1}{ (2k+3)!! } 
\Bigg\{
\sum_{\substack{ \phi \neq J \subset \{ 1,\dots, n \} \\ | \underline{a}_J | \leq 1}}
h_{k; \, \underline{e}_J} 
\langle   \tau_{ | \underline{e}_J | - \#(J) + 1 + k; \,   | \underline{a}_J | }  \tau_{ \underline{e}_{J^c}; \,  \underline{a}_{J^c} }  \rangle_{g, n-\#(J)+1}
\\
& \qquad \qquad
+\frac{1}{2} \sum_{ \substack{ r+s=k-1 \\ r,s \geq 0 } } (2r+1)!! (2s+1)!! 
\Bigg[
\langle \tau_r \tau_s \tau_{ \underline{e}; \,  \underline{a} }\rangle_{g-1,n+2}  \\
 & \qquad \qquad \qquad   +\sum_{ I \subset \{ 1, \dots , n \} }
\langle \tau_r \tau_{\underline{e}_I ; \,  \underline{a}_I}\rangle\langle \tau_s \tau_{\underline{e}_{I^c} ; \,  \underline{a}_{I^c}}\rangle
\Bigg]  \Bigg\}.
\end{split}
\end{equation}
%In particular, if we assume $a_i =1$ for all $i$, then the above equation reduce to original Witten's Conjecture, since $J$ can only have one element in this case. 
\end{proposition}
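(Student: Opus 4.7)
The plan is to prove \eqref{eq:3.1} by expanding the LHS into unweighted correlators via Proposition~\ref{p:1}, applying the classical Virasoro recursion of Proposition~\ref{p:2}, and then matching the resulting combinatorial sums with those obtained by expanding the RHS also via Proposition~\ref{p:1}. The matching is effected by a Möbius-type identity involving the combinatorial function $h$ from Section~\ref{s:2}.

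First, since $b=1$ and every $a_i\in[0^+,1]$, the weight-$1$ marking must be a singleton in any admissible partition of $(1,\underline{a})$, so $\mathcal{P}(1,\underline{a})\cong\mathcal{P}(\underline{a})$; Proposition~\ref{p:1} then gives
\[
\langle\tau_{k+1}\tau_{\underline{e};\underline{a}}\rangle_{g,n+1}
=\sum_{p\in\mathcal{P}(\underline{a})}(-1)^{\op{codim}(p)}\langle\tau_{k+1}\tau_{p(\underline{e})}\rangle_{g,\rho(p)+1}.
\]
Applying Proposition~\ref{p:2} to each summand produces three types of contributions: a sum over the parts of $p$ weighted by values of $h$; a genus-reducing term $\langle\tau_r\tau_s\tau_{p(\underline{e})}\rangle_{g-1}$; and a splitting term. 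The last two retain the partition structure on all insertions other than the new $\tau_r,\tau_s$ (which have weight $1$ and hence themselves remain singleton parts), so Proposition~\ref{p:1} in reverse reassembles them exactly into the two last summands on the RHS of \eqref{eq:3.1}.

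The core of the argument is the matching of the $h$-terms. On the LHS, swapping the order of summation and writing each $p\in\mathcal{P}(\underline{a})$ as a ``selected'' part $q$ (to which the Virasoro operator is applied) together with an admissible partition $p'$ of $\underline{a}_{q^c}$, and using $\op{codim}(p)=\op{codim}(p')+\#q-1$, one rewrites the $h$-contribution as
\[
\frac{1}{(2k+3)!!}\sum_{\substack{\emptyset\neq q\subseteq\{1,\dots,n\}\\|\underline{a}_q|\leq 1}}(-1)^{\#q-1}h_{k;|\underline{e}_q|-\#q+1}\sum_{p'\in\mathcal{P}(\underline{a}_{q^c})}(-1)^{\op{codim}(p')}\langle\tau_{|\underline{e}_q|-\#q+k+1}\tau_{p'(\underline{e}_{q^c})}\rangle.
\]
On the RHS, expand each weighted correlator $\langle\tau_{|\underline{e}_J|-\#J+k+1;|\underline{a}_J|}\tau_{\underline{e}_{J^c};\underline{a}_{J^c}}\rangle$ via Proposition~\ref{p:1} into a sum over admissible partitions of $(|\underline{a}_J|,\underline{a}_{J^c})$, and reindex these partitions by a larger subset $K\supseteq J$ (the total support of the ``merged'' first part, with $|\underline{a}_K|\leq 1$) together with a partition of $\underline{a}_{K^c}$. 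Exchanging the order of summation so that $K$ is outermost reduces the desired equality to the combinatorial identity
\[
\sum_{\emptyset\neq J\subseteq K}(-1)^{\#K-\#J}h_{k;\underline{e}_J}=(-1)^{\#K-1}h_{k;|\underline{e}_K|-\#K+1},
\]
valid for every $K\subseteq\{1,\dots,n\}$. This follows from the definition of $h_{k;\underline{e}_J}$ as an alternating sum over nonempty sublists (equivalently Lemma~\ref{l:h}) together with the standard vanishing $\sum_{L\subseteq J\subseteq K}(-1)^{\#K-\#J}=\delta_{L,K}$.

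The main obstacle I anticipate is the careful bookkeeping of signs and admissibility conditions: one must track how the weight $|\underline{a}_q|$ on the selected $\tau$-factor of the RHS unpacks, under Proposition~\ref{p:1}, into admissible partitions of $(|\underline{a}_q|,\underline{a}_{q^c})$ that may further merge $|\underline{a}_q|$ with subsets of $\underline{a}_{q^c}$, and to verify that the $(-1)$ exponents produced by $\op{codim}$ agree with those demanded by the Möbius identity. Once this bookkeeping is carried out, the three reassembled contributions match the LHS term by term, yielding \eqref{eq:3.1}.
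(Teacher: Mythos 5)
Your proposal is correct and follows essentially the same route as the paper: expand the left side by Proposition~\ref{p:1}, apply the unweighted recursion of Proposition~\ref{p:2}, and match the three resulting groups of terms, with the genus-reducing and splitting terms reassembled via codimension additivity and the $h$-terms matched by the identity $\sum_{\emptyset\neq J\subseteq K}(-1)^{\#J-1}h_{k;\underline{e}_J}=h_{k;|\underline{e}_K|-\#K+1}$, which is exactly the identity the paper verifies by comparing coefficients of $h_{k;e_{j_1}+\dots+e_{j_m}-m+1}$ and the binomial vanishing $\sum_s\binom{l-m}{s}(-1)^s=0$. Your reorganization of the double sum by the selected part $q$ (resp.\ merged support $K$) is only a cosmetic variant of the paper's fixing of a partition $p$ and a part $j$.
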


\begin{proof}
%We recall the following formula:
%\begin{equation}\label{e:1}
%\llangle \tau_{\underline{e}, \,  \underline{a}} \rrangle 
%= \sum_{  p\in \mathcal{P} ( \underline{a} )  } 
%(-1)^{ {\rm codim} (p) }
%\llangle \tau_{ p ( \underline{e} ) } \rrangle.
%\end{equation}
%Here $\mathcal{P}$ is the admissible partition.
%
%Now we compute
\[
\begin{split}
 & \langle \tau_{ k+1 } \tau_{ \underline{e}; \,  \underline{a} } \rangle_{g,n+1} 
= \sum_{ p \in \mathcal{P} ( \underline{e}; \, \underline{a} ) } 
(-1)^{ {\rm codim} (p) } 
\langle \tau_{k+1} \tau_{ p ( \underline{e} ) } \rangle
\\
&= \frac{1}{(2k+3)!!}  \sum_{ p \in \mathcal{P} ( \underline{e}; \, \underline{a} ) } (-1)^{ {\rm codim} (p) } 
\left\{
\sum_{j=1}^{ {\rm dim} (p) } 
h_{k; p( \underline{e} )_j } 
\langle \tau_{ p (\underline{e}) + k[j] } \rangle
\right.
\\
& 
\qquad + \frac{1}{2} 
\sum_{ \substack{ r+s=k-1 \\ r,s \geq 0 } } {\small (2r+1)!!(2s+1)!! }  \ \cdot  \\
& \qquad \qquad \cdot
\sum_{ p \in \mathcal{P} ( \underline{a} ) } 
\left.
\left[
\langle \tau_r \tau_s \tau_{ p ( \underline{e} ) } \rangle 
+\sum_{  I \subset \{ 1,\dots, { \rm dim } p \}  }
\langle \tau_r \tau_{ p ( \underline{e} )_I } \rangle  \langle \tau_s \tau_{ p ( \underline{e} )_{I^c}} \rangle
\right]
\right\}
\end{split}
\]
where the first equality is a consequence of \eqref{e:1} and the second equality follows from Proposition~\ref{p:2}.
The above output consists of three groups of sums.
In order for it to be equal to the RHS of \eqref{eq:3.1}, the following three sets of equalities must hold.
\begin{equation} \label{eq:4.2}
\begin{split}
 & \sum_{ p \in \mathcal{P} ( \underline{e}; \, \underline{a} ) } (-1)^{ {\rm codim} (p) } \sum_{j=1}^{ {\rm dim} p }  h_{ k; p ( \underline{e} )_j } 
\langle  \tau_{ p ( \underline{e} ) + k[j] } \rangle \\
= &\sum_{ \substack{\phi \neq J \subset \{ 1, \dots, n \} \\ | a_J | \leq 1}}
h_{k; \,  \underline{e}_J } 
\langle  \tau_{ |\underline{e}_J| - \# (J) + 1+ k; \,  | a_J | }  \tau_{ \underline{ e}_{J^c} ; \,  \underline{ a}_{J^c} }  \rangle.
\end{split}
\end{equation}
\begin{equation} \label{e:4.3}
\sum_{ p \in \mathcal{P} (  \underline{e}; \, \underline{a} ) } 
(-1)^{ {\rm codim} (p) }
\langle  \tau_r \tau_s \tau_{ p (\underline{e}) }  \rangle
= \langle  \tau_r \tau_s \tau_{ \underline{e}; \,  \underline{a} }  \rangle.
\end{equation}
\begin{equation} \label{e:4.4}
\begin{split}
&\sum_{ p \in \mathcal{P} (  \underline{e}; \, \underline{a} ) } 
(-1)^{ {\rm codim} (p) }
\sum_{ I \subset \{ 1, \dots, {\rm dim} (p) \} }
\langle  \tau_r \tau_{ p ( \underline{e} )_I }  \rangle
\langle  \tau_s \tau_{ p (\underline{e})_{I^c} }  \rangle  \\
=&\sum_{ I \subset \{ 1, \dots n \} }
\langle  \tau_r \tau_{ \underline{e}_I ; \,  \underline{a}_I }  \rangle
\langle  \tau_s \tau_{ \underline{ e }_{I^c} ; \,  \underline{ a }_{I^c} }  \rangle.
\end{split}
\end{equation}

To check the validity of equation \eqref{eq:4.2} we expand right hand side by equation (\ref{e:1}) and compare the coefficient of the term $\langle \tau_{ p(\underline{e}) + k[j] } \rangle$. We have
\begin{enumerate}
\item[LHS] = $ (-1)^{ {\rm codim} (p) } h_{ k; p ( \underline{e} )_j }  $,
\item[RHS] = $\displaystyle \sum_{ \phi \neq J \subset \{ j_1, \dots, j_l \} } (-1)^{ {\rm codim}(p) + \#(J) -1 } h_{k; \,  \underline{e}_J } $.
\end{enumerate}
Write $p(\underline{e})_j =: e_{j_1} + \cdots + e_{j_l} - l +1$. % and denote by $I := \{ i_1,\dots, i_n \}$.
It remains to show that RHS= LHS. We expand the RHS by the definition of $h_{k; \, ...}$ and compare the coefficient of $h_{k; \, e_{j_1}+\dots +e_{j_m}-m+1}$. For $m < l$ we have: 
\[
\begin{split}
\op{Coeff} \left(  h_{k; \, e_{j_1}+\dots +e_{j_m}-m+1},{\rm LHS} \right)  & = 0
\\
\op{Coeff} \left( h_{k; \, e_{j_1}+\dots +e_{j_m}-m+1},{\rm RHS} \right)  
& = \sum_{ \{ j_1,\dots,j_m \} \subset J \subset \{ j_1,\dots,j_l \} } (-1)^{\op{codim}(p) + \#(J) -1} \cdot (-1)^{m-1}
\\
& = \sum_{r=m}^l \binom{l-m}{r-m} (-1)^{ \op{codim}(p)+r-1 } \cdot (-1)^{m-1}
\\
&= \sum_{s=0}^{l-m} \binom{l-m}{s} (-1)^s (-1)^{\op{codim(p)}} = 0
\end{split}
\]
For the second equality, we notice that the sign of the coefficient of $h_{k; \, e_{j_1}+\dots +e_{j_m}-m+1}$ for the RHS only depends on the cardinality of $J$.
Similarly, for $m=l$, we have:
\[
\begin{split}
\op{Coeff} \left( h_{k; \, e_{j_1}+\dots +e_{j_l}-l+1},{\rm LHS}\right)  & = (-1)^{ {\rm codim} (p) }
\\
\op{Coeff} \left( h_{k; \, e_{j_1}+\dots +e_{j_l}-l+1},{\rm RHS}\right)  & = \sum_{s=0}^{0} (-1)^{s} \binom{0}{s} (-1)^{ {\rm codim} (p) } = (-1)^{ {\rm codim} (p) }.
\end{split}
\]
This concludes the demonstration of \eqref{eq:3.1}.
%This proves the lemma.
%\end{proof}
%\begin{lemma}
%\[
%\sum_{ p \in \mathcal{P} ( \underline{a} ) } 
%(-1)^{ {\rm codim} (p) }
%\llangle  \tau_r \tau_s \tau_{ p (\underline{e}) }  \rrangle
%= \llangle  \tau_r \tau_s \tau_{ \underline{e}, \,  \underline{a} }  \rrangle.
%\]
%\end{lemma}
%\begin{proof}

Equaltion \eqref{e:4.3} is a special case of the following more general observation
\[
\sum_{ p \in \mathcal{P} ( \underline{a} ) } 
(-1)^{ {\rm codim} (p) }
\langle  \tau_{r_1} \cdots \tau_{r_n} \tau_{ p (\underline{e}) }  \rangle
= \langle  \tau_{r_1} \cdots \tau_{r_n} \tau_{ \underline{e}; \,  \underline{a} }  \rangle.
\]
This observation follows from the definition of admissible partition and Proposition~\ref{p:1}.

For Equation \eqref{e:4.4} we expand right hand side by equation (\ref{e:1}) and compare the coefficient of the term 
$\langle  \tau_r \tau_{ p ( \underline{e} )_I }  \rangle \langle  \tau_s \tau_{ p (\underline{e})_{I^c} }  \rangle$ on both sides:
\begin{enumerate}
\item[LHS] = $(-1)^{ {\rm codim}(p) }$,
\item[RHS] = $(-1)^{ {\rm codim}(p_I) } (-1)^{ {\rm codim}(p_{I^c}) } $,
\end{enumerate}
where $p_I$ is the partition on $I$ given by the restriction of $p$ on $I$. We define the $p_{I^c}$ in the same way.
Now codim($p$) = codim($p_I$) + codim($p_{I^c}$) gives LHS = RHS.
This proves \eqref{e:4.4}.

The proof of proposition is now complete.
\end{proof}

\subsection{Proof of Lemma~\ref{l:3.5}}

First we notice that it suffices to prove the following three identities:
\[
\begin{split}
[ M_{k_1;a} , M_{k_2;b} ] = 0, 
\quad [ M_{k_1;b}, L_{k_2} ] = (k_1+ \frac{3}{2}) M_{k_1+k_2;b} ,
\quad [L_{k_1}, L_{k_2}] = (k_1-k_2) L_{k_1+k_2}.
\end{split}
\]

First, we prove $[L_{k_1}, L_{k_2}] = (k_1 - k_2) L_{k_1+k_2}$ as follows:

A technical lemma of identities of the function $h_{k; \underline{e}}$ will be used.

\begin{lemma} \label{l:4.3}
For $\underline{e} \in \mathbb{N}^n$, and $\underline{a} \in \A^n$, we have the following equality 
\[
2(k_1-k_2)\  \frac{ h_{k_1+k_2; \underline{e}} }{ \#\op{Aut}(\underline{e}, \underline{a}) }
= \sum_{ [\underline{j}, a_{\underline{j}}] \in \op{Power}(\underline{e}, \underline{a} ) } 
\frac{ \Big[
h_{k_1; \underline{j} } h_{k_2; k_1+ | \underline{j} | - \#(\underline{j}) +1\ \underline{j}^c }  - 
h_{k_2; \underline{j} } h_{k_1; k_2+ |\underline{j} | - \#(\underline{j}) +1\ \underline{j}^c }
\Big] }
{ \#\op{Aut}(\underline{j}, a_{\underline{j}})\ \#\op{Aut}((\underline{j}, a_{\underline{j}})^c )}.
\]
In particular, if $n=1$, $e \in \mathbb{N}$, we have:
\[
2(k_1-k_2) h_{k_1+k_2; e} = h_{k_1;e} h_{k_2; k_1+e} - h_{k_2;e} h_{k_1; k_2+e}.
\]
\end{lemma}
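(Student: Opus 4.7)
The plan is to prove the single-variable case $n=1$ first by a direct double-factorial computation, then bootstrap to the general case via the defining inclusion-exclusion formula for $h_{k;\underline{e}}$.

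For the $n=1$ identity, I would substitute $h_{k;e} = (2k+2e+1)!!/(2e-1)!!$ and use the telescoping identity $(2m+1)!!/(2m-1)!! = 2m+1$. This gives
\[
h_{k_1;e}\,h_{k_2;k_1+e} = (2k_1+2e+1)\,\frac{(2k_1+2k_2+2e+1)!!}{(2e-1)!!},
\]
and the $(k_1\leftrightarrow k_2)$-swapped expression is the same with $k_2$ in place of $k_1$ in the linear factor. Subtracting produces $2(k_1-k_2)\cdot h_{k_1+k_2;e}$ exactly.

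For the general case, I would first invoke Lemma~\ref{l:h} to convert both sides, after multiplication by $\#\op{Aut}(\underline{e},\underline{a})$, into sums over ordinary subsets $J\subset\{1,\ldots,n\}$: the equivalence-class sum with its $\op{Aut}$-quotient collapses to an unweighted sum over subsets. The target identity becomes
\[
2(k_1-k_2)\,h_{k_1+k_2;\underline{e}} = \sum_{J\subset\{1,\ldots,n\}}\Bigl[h_{k_1;\underline{e}_J}\,h_{k_2;\,k_1+|\underline{e}_J|-\#J+1,\,\underline{e}_{J^c}} - (k_1\leftrightarrow k_2)\Bigr].
\]
I would expand the LHS by the defining alternating sum for $h_{k_1+k_2;\underline{e}}$ and apply the $n=1$ identity to each summand $h_{k_1+k_2;|\underline{e}_J|-\#J+1}$. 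For the RHS, I would expand each factor by the same defining sum; the crucial step is expanding $h_{k_2;\,m_J,\underline{e}_{J^c}}$ (with $m_J:=k_1+|\underline{e}_J|-\#J+1$) by separating subsets that contain the ``distinguished'' slot from those that do not.

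The two resulting pieces of the RHS have very different behavior. The pieces where the distinguished slot is omitted give products of the form $h_{k_1;|\underline{e}_{J'}|-\#J'+1}\cdot h_{k_2;|\underline{e}_{J''}|-\#J''+1}$ with $J',J''$ disjoint nonempty; the sum over the ambient $J$ containing $J'$ and disjoint from $J''$ is symmetric in $(k_1,k_2)$, so it vanishes upon antisymmetrization. The pieces where the distinguished slot is included, after relabeling $K=J\cup J_0$, carry a sign $(-1)^{\#K-\#J}$, and the inner sum $\sum_{J:\,J'\subset J\subset K}(-1)^{\#K-\#J}$ vanishes by the binomial theorem unless $K=J'$, leaving exactly $\sum_{\phi\neq J'}(-1)^{\#J'-1}h_{k_1;|\underline{e}_{J'}|-\#J'+1}\,h_{k_2;k_1+|\underline{e}_{J'}|-\#J'+1}$, which matches the expansion of the LHS from the $n=1$ identity.

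The main obstacle is bookkeeping: tracking all subsets, signs, and the distinguished-slot case analysis through a double expansion. The key conceptual point that makes the proof tractable is recognizing that half of the terms vanish by $(k_1,k_2)$-symmetry, while the other half collapses via the elementary combinatorial identity $\sum_{J'\subset J\subset K}(-1)^{\#K-\#J}=\delta_{K,J'}$ — so the general identity is truly ``driven'' by the scalar $n=1$ case.
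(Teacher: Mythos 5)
Your proposal is correct and follows essentially the same route as the paper: the $n=1$ case by direct double-factorial cancellation, then the general case by expanding both sides via the defining alternating sum, with the disjoint-slot products cancelling by $(k_1\leftrightarrow k_2)$ antisymmetry and the strict-inclusion terms killed by the alternating binomial identity, leaving exactly the terms governed by the scalar case. Your preliminary normalization by $\#\op{Aut}(\underline{e},\underline{a})$ to pass from equivalence classes to plain subset sums is only a cosmetic repackaging of the paper's bookkeeping of the $\op{Aut}$ coefficients.
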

\begin{proof}
The proof for the special ($n=1$) case is easy. Just expand $h_{k;e}$ by definition.

For the general case, we expand the positive part of RHS, called PRHS, by the definition of $h_{k;...}$. For the negative part, it is just interchange $k_1$ and $k_2$. We have RHS = PRHS - NRHS.

Notice that there are exactly three types of terms:
\begin{enumerate}
\item 
$ h_{k_1; |\underline{j} | - \# \underline{j} +1 } 
h_{k_2; k_1 +|\underline{j} | - \# \underline{j} +1} $.
\item 
$ h_{k_1;|\underline{j} | - \# \underline{j} +1 } 
h_{k_2; |\underline{m} | - \# \underline{m} +1} $ where $\underline{m} \cap \underline{j} = \emptyset$.
\item 
$ h_{k_1; |\underline{j} | - \# \underline{j} +1} 
h_{k_2; k_1 + |\underline{j}' | - \# \underline{j}' +1} $ where $\underline{j} \subsetneq \underline{j'}$.
\end{enumerate}
Now we compute the coefficient of the terms of the above three types for PRHS explicitely
\begin{enumerate}
\item 
\[
\begin{split}
&\op{Coeff} \left( h_{k_1; |\underline{j} | - \# \underline{j} +1} 
h_{k_2; k_1 + |\underline{j} | - \# \underline{j} +1},{\rm PRHS} \right)
\\
&= 
\frac{
\op{Coeff} \left( h_{k_1; |\underline{j} | - \# \underline{j} +1 },h_{k_1; \underline{j}}\right)
\cdot
\op{Coeff} \left( h_{k_2; k_1 + |\underline{j} | - \# \underline{j} +1} 
,h_{k_2; k_1+ |\underline{j} | - \# \underline{j} +1, \underline{j}^c} \right)
}
{ \#\op{Aut}  (\underline{j}, a_{\underline{j}}) \#\op{Aut}( (\underline{j}, a_{\underline{j}})^c)  } 
\\
&= 
\frac{(-1)^{\# \underline{j} - 1} \cdot 1 }{ \#\op{Aut}  (\underline{j}, a_{\underline{j}}) \#\op{Aut}( (\underline{j}, a_{\underline{j}})^c)  }
= \frac{(-1)^{\# \underline{j} - 1}}{ \#\op{Aut}  (\underline{j}, a_{\underline{j}}) \#\op{Aut}( (\underline{j}, a_{\underline{j}})^c) }.
\end{split}
\]
\item For $\underline{m} \cap \underline{j} = \emptyset$, we compute 
\[
\begin{split}
& \op{Coeff} \left( h_{k_1; |\underline{j} | - \# \underline{j} +1} 
h_{k_2; |\underline{m} | - \# \underline{m} +1} , {\rm PRHS} \right)
\\
&= \sum_{\underline{j} \subset \underline{l}, \underline{m} \subset \underline{l}^c  }
\frac{
\op{Coeff} \left( h_{k_1; |\underline{j} | - \# \underline{j} +1} , h_{k_1; \underline{l}} \right)
\cdot
\op{Coeff} \left( h_{k_2; |\underline{m} | - \# \underline{m} +1} 
, h_{k_2; k_1+ | \underline{l} | - \#\underline{l} +1\ \underline{l}^c } \right)
}
{
\#\op{Aut}(\underline{l}, a_{\underline{l}}) 
\#\op{Aut}((\underline{l}, a_{\underline{l}})^c)
} 
\\
&= \sum_{\underline{j} \subset \underline{l} \subset \underline{j}'}
\frac{
\Big( \frac{ (-1)^{\#\underline{j}  -1} \op{Aut}  (\underline{l}, a_{\underline{l}} ) }
{ \op{Aut}  (\underline{j}, a_{\underline{j}}) \op{Aut}  (\underline{l} \setminus \underline{j}, a_{\underline{l} \setminus \underline{j}}) } \Big) 
\cdot
\Big( \frac{ (-1)^{\#\underline{m}-1} \op{Aut}  (\underline{l}^c, a_{\underline{l}^c}) }
{ \op{Aut}  (\underline{m}, a_{\underline{m}}) \op{Aut}  (\underline{l}^c \setminus \underline{m}, a_{\underline{l}^c \setminus \underline{m}}) } \Big)
}
{
\#\op{Aut}(\underline{l}, a_{\underline{l}}) 
\#\op{Aut}((\underline{l}, a_{\underline{l}})^c)
} 
\end{split}
\]
We don't need to simply this term since we have 
\[
\op{Coeff} \left(  h_{k_1; |\underline{j} | - \# \underline{j} +1} h_{k_2; |\underline{m} | - \# \underline{m} +1} , {\rm PRHS} \right) =
\op{Coeff} \left( h_{k_1; |\underline{j} | - \# \underline{j} +1} h_{k_2; |\underline{m} | - \# \underline{m} +1} , {\rm NRHS} \right).
\]
\item For $\underline{j} \subsetneq \underline{j}'$, we compute
\[
\begin{split}
&\op{Coeff} \left( h_{k_1; |\underline{j} | - \# \underline{j} +1} h_{k_2; k_1 + |\underline{j} | - \# \underline{j} +1}, {\rm PRHS} \right)
\\
&= \sum_{\underline{j} \subset \underline{l} \subset \underline{j}'}
\frac{
\op{Coeff} \left( h_{k_1; |\underline{j} | - \# \underline{j} +1 },h_{k_1; \underline{l}}] \right)
\cdot
\op{Coeff} \left(
h_{k_2; k_1 + |\underline{j} | - \# \underline{j} +1 } 
,h_{k_2; k_1+ |\underline{l} | - \# \underline{l} +1, \underline{l}^c } \right)
}
{ \#\op{Aut} (\underline{l}, a_{\underline{l}}) 
\#\op{Aut} ((\underline{l}, a_{\underline{l}})^c ) } 
\\
&= \sum_{\underline{j} \subset \underline{l} \subset \underline{j}'}
\frac{
\Big( \frac{ (-1)^{ \#\underline{j}-1 } \#\op{Aut}  (\underline{l}, a_{\underline{l}}) }
{ \#\op{Aut}  (\underline{j}, a_{\underline{j}}) \#\op{Aut}  (\underline{l} \setminus \underline{j}, a_{\underline{l} \setminus \underline{j}})}  \Big)
\cdot
\Big( \frac{ (-1)^{ \#\underline{j}'  - \#\underline{l} -1 } \#\op{Aut}  (\underline{l} \setminus \underline{j}) , a_{\underline{l} \setminus \underline{j}) } }
{ \#\op{Aut}  (\underline{j}' \setminus \underline{l}, a_{\underline{j}' \setminus \underline{l}}) 
\#\op{Aut}  (( \underline{j}', a_{\underline{j}'})^c) } \Big)
}
{ \#\op{Aut} (\underline{l}, a_{\underline{l}}) \#\op{Aut} ((\underline{l}, a_{\underline{l}})^c ) }  
\\
&= \frac{ (-1)^{ \#\underline{j} + \#\underline{j}' } }{ 
\#\op{Aut}  (\underline{j}, a_{\underline{j}}) 
\#\op{Aut} ( (\underline{j}', a_{\underline{j}'})^c ) }
\sum_{\underline{j} \subset \underline{l} \subset \underline{j}'} 
\frac{ (-1)^{ \#\underline{l} } }{ \#\op{Aut}  (\underline{l} \setminus \underline{j}, a_{\underline{l} \setminus \underline{j}}) 
\#\op{Aut}  (\underline{j}'\setminus \underline{l}, a_{\underline{j}'\setminus \underline{l}} ) } =0.
\end{split}
\]
\end{enumerate}
Finally we compute PRHS - NRHS and notice that only type $(1)$ terms will have nontrivial contribution.
\[
\begin{split}
&{ \rm PRHS  -  NRHS } 
\\
&= \sum_{\underline{j}} \frac{(-1)^{ \#\underline{j} -1}}{ \#\op{Aut}(\underline{j}, a_{\underline{j}}) \#\op{Aut}((\underline{j}, a_{\underline{j}})^c)} 
\left(
h_{k_1; |\underline{j} | - \# \underline{j} +1 } h_{k_2; k_1 + |\underline{j} | - \# \underline{j} +1 } -
h_{k_2; |\underline{j} | - \# \underline{j} +1 } h_{k_1; k_2 + |\underline{j} | - \# \underline{j} +1 }
\right)
\\
&= \sum_{\underline{j}} \frac{(-1)^{ \#\underline{j} -1}}{ \#\op{Aut}(\underline{j}, a_{\underline{j}}) \#\op{Aut}((\underline{j}, a_{\underline{j}})^c)} 
2(k_1-k_2) h_{k_1+k_2; |\underline{j} | - \# \underline{j} +1} 
\\
&= \frac{ h_{k_1+k_2; \underline{i}} }{ \#\op{Aut} (\underline{i}, a_{\underline{i}}) }.
\end{split}
\]
\end{proof}
Now we compare the coefficient of $\displaystyle t_{\underline{i}; \underline{a}} \frac{\partial}{\partial t_{\underline{i}; \underline{a}} } $ for $L_{k_1+k_2}$ and $[L_{k_1}, L_{k_2}]$:
\[
\begin{split}
\op{Coeff} \left(
t_{\underline{i}; \underline{a}} \frac{\partial}{\partial t_{\underline{i}; \underline{a}} } ,L_{k_1+k_2}\right) &= \frac{h_{k_1+k_2 ; \underline{i}}}{ \#\op{Aut}(\underline{i}, \underline{a}) }
\\
\op{Coeff} \left(
t_{\underline{i}; \underline{a}} \frac{\partial}{\partial t_{\underline{i}; \underline{a}} } , [ L_{k_1}, L_{k_2} ] \right)
&= \sum_{ [\underline{j}] \in \op{Power}(\underline{e}) } 
\frac{ \Big[
h_{k_1; \underline{j} } h_{k_2; k_1+ | \underline{j} | - \#(\underline{j}) +1\ \underline{j}^c }  - 
h_{k_2; \underline{j} } h_{k_1; k_2+ |\underline{j} | - \#(\underline{j}) +1\ \underline{j}^c }
\Big] }
{\#\op{Aut}(\underline{j}, a_{\underline{j}})\ \#\op{Aut}((\underline{j}, a_{\underline{j}})^c) }.
\end{split}
\]
The equality of the above coefficients is exactly the statement of Lemma ~(\ref{l:4.3}).

We furthermore notice that the leading term
\[
- (2k+3)!! \frac{\partial}{\partial t_{k+1}},
\]
and the quadratic term
\[
\frac{1}{2}\sum_{r+s = k-1} (2r+1)!! (2s+1)!! \frac{\partial}{\partial t_r} \frac{\partial }{\partial t_s}
\]
only depend to weight $1$ variable. Hence checking the virasoro relation for these two terms is the same as that in the unweighted case and is omitted. Now combining the results for weighted, leading and quadratic terms, we get the desired relation:
\[
(k_1 - k_2) L_{k_1+k_2} = [ L_{k_1}, L_{k_2} ].
\]

We now check $ [ M_{k_1; b_1} , M_{k_2;b_2} ] = 0 $.
For any $\underline{e} \in \mathbb{N}^n$ and $\underline{a} \in {\A}^n$ satisfies $b_1+b_2+ |\underline{a}| \leq 1$, we compute
\[
\begin{split}
&\op{Coeff} \left(
t_{ \underline{e}; \underline{a} } 
\frac{\partial}{\partial t_{ k_1+k_2 + 1 + |\underline{e}| -\#(\underline{e}); |\underline{a}| + b_1 + b_2 }}
, M_{k_1;b_1} ( M_{k_2;b_2} ) \right)
\\
&= 
\frac{(2k_1+3)!! (2k_2+3)!! }{4}   
\left(
\frac{1}{\#\op{Aut}(\underline{e}, \underline{a})} + \sum_{\phi \neq \underline{j} \subset {\rm Power}(\underline{e})}
\frac{1}{ \#\op{Aut} (\underline{j}, a_{\underline{j}}) \#\op{Aut}((\underline{j}, a_{\underline{j}})^c) }
\right).
\end{split}
\]
Notice that it is symmetric with respect to $(k_1,b_1)$ and $(k_2, b_2)$. Hence 
\[
\op{Coeff} \left(
t_{ \underline{e}; \underline{a} } 
\frac{\partial}{\partial t_{ k_1+k_2 + |\underline{e}| -\#(\underline{e}) +1 }}
, [M_{k_1;b_1} , M_{k_2;b_2} ] \right) = 0.
\]

We will need the following ``inductive definition'' of the $h$-function in the proof below.

\begin{lemma} \label{2:h}
\begin{equation} \label{ce:h}
h_{k;c, \underline{e}} = h_{k;c} + \sum_{\phi \neq J \subset \{ 1,\dots,n \} }
\Big(
h_{k; \underline{e}_J} - h_{k; c+|\underline{e}_J| - \#(J), \underline{e}_{J^c}}
\Big)
\end{equation}
\end{lemma}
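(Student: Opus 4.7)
The plan is to prove (\ref{ce:h}) by direct expansion from the definition of $h_{k;\underline{f}}$ as an alternating sum over non-empty subsets of the indexing set of $\underline{f}$.

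First I would unfold the LHS. Indexing the list $(c,\underline{e}) = (c, e_1, \dots, e_n)$ by $\{0, 1, \dots, n\}$ with $0$ corresponding to $c$, I split the defining sum over non-empty $S \subset \{0,\dots,n\}$ according to whether $0 \in S$. The $0 \notin S$ contributions recombine to $h_{k;\underline{e}}$ by definition; the $0 \in S$ contributions, parameterized by $K := S \setminus \{0\} \subset \{1,\dots,n\}$, yield $h_{k;c}$ (for $K = \emptyset$) plus $(-1)^{\#(K)} h_{k;\,c+|\underline{e}_K|-\#(K)}$ (for $K \neq \emptyset$). This gives the compact form
\[
h_{k;c,\underline{e}} \;=\; h_{k;c} + h_{k;\underline{e}} - \sum_{\emptyset \neq K \subset \{1,\dots,n\}} (-1)^{\#(K)-1} h_{k;\,c+|\underline{e}_K|-\#(K)}.
\]

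Second, I would show that the RHS of (\ref{ce:h}) reduces to this same expression. Apply the same splitting trick to each summand $h_{k;\,c+|\underline{e}_J|-\#(J),\,\underline{e}_{J^c}}$, treating $c+|\underline{e}_J|-\#(J)$ as the distinguished entry, and separately handle the boundary case $J = \{1,\dots,n\}$ where $J^c = \emptyset$ and the term collapses to $h_{k;\,c+|\underline{e}|-n}$. Relabeling $M := J \cup L$ for the inner subset index $L \subset J^c$, the aggregated coefficient of $h_{k;\,c+|\underline{e}_M|-\#(M)}$ becomes a sum over $\emptyset \neq J \subsetneq M$ of $(-1)^{\#(M)-\#(J)}$, which the elementary binomial identity $\sum_{j=0}^{m}\binom{m}{j}(-1)^j = 0$ for $m \geq 1$ (cf.\ Lemma~\ref{l:1}) evaluates in closed form. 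Combining with the $\sum_J h_{k;\underline{e}_J}$ contribution, where only the $J = \{1,\dots,n\}$ term survives uncancelled by the corresponding $h_{k;\underline{e}_{J^c}}$ pieces from the expansion, one obtains exactly the displayed LHS form above.

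The argument is essentially mechanical combinatorics; the only subtlety is the boundary bookkeeping, distinguishing $J = \{1,\dots,n\}$ from $J \subsetneq \{1,\dots,n\}$ so that the empty-list degeneration $h_{k;c',\emptyset} = h_{k;c'}$ is handled correctly. The case $n = 1$, giving $h_{k;c,e} = h_{k;c} + h_{k;e} - h_{k;c+e-1}$, follows immediately from the definition and serves as a useful sanity check; an alternative, conceptually parallel route is induction on $n$, with the step peeling off the last entry $e_n$ from both sides and invoking (\ref{ce:h}) at length $n-1$ inside the resulting expressions.
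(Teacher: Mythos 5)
Your proposal is correct and follows essentially the same route as the paper: both expand everything via the definition of $h_{k;\underline{e}}$ into the atomic quantities $h_{k;c}$, $h_{k;|\underline{e}_I|-\#(I)+1}$ and $h_{k;\,c+|\underline{e}_I|-\#(I)}$ and match coefficients using the binomial identity $\sum_{j}\binom{m}{j}(-1)^j=0$; your intermediate ``compact form'' for the left-hand side is just a repackaging of that coefficient comparison. One small bookkeeping point: when you aggregate the coefficient of $h_{k;\,c+|\underline{e}_M|-\#(M)}$, the relevant sum runs over $\emptyset \neq J \subseteq M$ (the $J=M$ contribution comes from the distinguished entry alone, i.e.\ the inner subset $L=\emptyset$), not only over $J \subsetneq M$; with that correction the alternating sum collapses to $(-1)^{\#(M)}$, matching the left-hand side.
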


\begin{proof}
We expand both sides by the definition of $h_{k; \dots}$. The proof of the equality reduce to the equality coefficient of the following three types of terms on both sides:
\[
h_{k;c}, \qquad
h_{k; |\underline{e}_I | - \#(I) + 1}, \quad h_{k; c+  |\underline{e}_I | - \#(I) }, \mbox{ where $ \phi \neq  I \subset \{ 1,\dots,n \}$. }
\]
\begin{enumerate}
\item $\op{Coeff} \left(  h_{k;c}, {\rm LHS} \right) = \op{Coeff} \left( h_{k;c}, {\rm RHS} \right)=1$ is clear.

\item We compare the coefficient of  $h_{k; |\underline{e}_I | - \#(I) + 1}$ on both sides as follows:
\[
\begin{split}
&\op{Coeff} \left(
h_{k; |\underline{e}_I | - \#(I) + 1},{\rm LHS}\right) = (-1)^{ \#(I)-1 };
\\
&\op{Coeff} \left( h_{k; |\underline{e}_I | - \#(I) + 1},{\rm RHS}\right)
\\
&=
 \op{Coeff} \left(
h_{k; |\underline{e}_I | - \#(I) + 1}, \sum_{\phi \neq J \subset \{ 1,\dots,n \} }
\Big(
h_{k; \underline{e}_J} - h_{k; c+|\underline{e}_J| - \#(J), \underline{e}_{J^c}}
\Big) \right)
\\
&= \op{Coeff} \left( h_{k; |\underline{e}_I | - \#(I) + 1}, \sum_{ I \subset J \subset \{ 1,\dots,n \} } h_{k;\underline{e}_J} \right)
\\
&\qquad\qquad\qquad - \op{Coeff} \left(  h_{k; |\underline{e}_I | - \#(I) + 1},\sum_{\phi \neq J \subset \{1,\dots,n\}, J \cap I = \phi } h_{k;\underline{e}_J} \right)
\\
&= (-1)^{\#(I)-1} 2^{n-\#(I)} - (-1)^{\#(I)-1}( 2^{n-\#(I)} -1 ) = (-1)^{\#(I)-1}.
\end{split}
\]
This proves the equality of the coefficients of the second type.
\item We compare the coefficient of $h_{k; c+  |\underline{e}_I | - \#(I) }$ on both sides as follows:
\[
\begin{split}
&\op{Coeff} \left(
h_{k; c+  |\underline{e}_I | - \#(I) },{\rm LHS}\right) = (-1)^{\#(I)};
\\
& \op{Coeff} \left( h_{k; c+  |\underline{e}_I | - \#(I) },{\rm RHS}\right) 
\\
&=
\op{Coeff} \left( -h_{k; c+  |\underline{e}_I | - \#(I) }, \sum_{\phi\neq J \subset I} h_{k; c+|\underline{e}_J| - \#(J), \underline{e}_{J^c}} \right)
\\
&= -\sum_{\phi \neq J \subset I} (-1)^{\#(I) - \#(J)} = (-1)^{\#(I)}.
\end{split}
\]
This proves the equality of the coefficients of the third type.
\end{enumerate}
\end{proof}

Finally, we prove $[ M_{k_1;b}, L_{k_2} ] = (k_1+ \frac{3}{2}) M_{k_1+k_2;b}$.

\begin{lemma} 
For any $k_1,k_2 \geq -1$, $\underline{e} \in \mathbb{N}^n$, and $a \in \A^n$, we have the following identity for $h$-function:
\[
-   \frac{  h_{ k_2 ; k_1+1, \underline{e} }  }{ \#\op{Aut}(\underline{e}, \underline{a}) }   
+ \sum_{\phi \neq [\underline{j}, a_{\underline{j}}] \subset {\rm Power}(\underline{e}, \underline{a}) } 
\frac{ - h_{k_2; k_1+1 + |\underline{j}| -\#(\underline{j}) , \underline{j}^c}
+  h_{k_2; \underline{j}} }{\#\op{Aut}(\underline{j}, a_{\underline{j}}) \#\op{Aut}((\underline{j}, a_{\underline{j}})^c )}
= - \frac{ (2k_1+2k_2+3)!! }{ (2k_1+1)!! \#\op{Aut}(\underline{e}, \underline{a})}.
\]
Moreover, the identity is equivalent to the relation: $[ M_{k_1;b}, L_{k_2} ] = ( k_1 + \frac{3}{2} )M_{k_1+k_2;b}$.
\end{lemma}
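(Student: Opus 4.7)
The plan is to recognize the asserted identity as a direct repackaging of the inductive formula for $h$ proved in Lemma~\ref{2:h}, and then to check that it is precisely the coefficient-matching condition for the claimed commutation relation. The key first observation is that the single-argument formula $h_{k_2;k_1+1} = (2k_1+2k_2+3)!!/(2k_1+1)!!$ shows that the right-hand side of the identity equals $-h_{k_2;k_1+1}/\#\op{Aut}(\underline{e},\underline{a})$, so the identity is equivalent to
\[
\frac{h_{k_2;k_1+1,\underline{e}} - h_{k_2;k_1+1}}{\#\op{Aut}(\underline{e},\underline{a})}
= \sum_{\phi \neq [\underline{j},a_{\underline{j}}]}
\frac{h_{k_2;\underline{j}} - h_{k_2;\,k_1+1+|\underline{j}|-\#(\underline{j}),\,\underline{j}^c}}{\#\op{Aut}(\underline{j},a_{\underline{j}})\,\#\op{Aut}((\underline{j},a_{\underline{j}})^c)}.
\]
Applying (\ref{ce:h}) with $c = k_1+1$ and $k = k_2$ produces the same left-hand side but with the right-hand side summed over ordered subsets $J \subset \{1,\dots,n\}$. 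Grouping $J$'s by the equivalence class of $(\underline{e}_J,\underline{a}_J)$ in $\op{Power}(\underline{e},\underline{a})$ — each class contributing $\#\op{Aut}(\underline{e},\underline{a})/(\#\op{Aut}(\underline{j},a_{\underline{j}})\,\#\op{Aut}((\underline{j},a_{\underline{j}})^c))$ ordered subsets — and dividing by $\#\op{Aut}(\underline{e},\underline{a})$ yields the displayed identity.

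For the ``moreover'' assertion, I would expand $[M_{k_1;b}, L_{k_2}] - (k_1+\tfrac{3}{2})M_{k_1+k_2;b}$ and match the coefficient of the monomial $t_{\underline{e};\underline{a}}\,\partial/\partial t_{k_1+k_2+1+|\underline{e}|-\#(\underline{e});\,b+|\underline{a}|}$, which is the only ``mixed'' type where both operators contribute. On the commutator side three kinds of contributions arise: the bare derivatives $-\partial/\partial t_{k_1+1;b}$ and $\partial/\partial t_{k_1+1}$ of $M_{k_1;b}$ against the linear-in-$t$ piece of $L_{k_2}$ (yielding the $-h_{k_2;\,k_1+1,\underline{e}}$ term); the transfer-term derivatives of $M_{k_1;b}$ against the same piece (yielding the $+h_{k_2;\underline{j}}$ contributions on sub-lists); and the derivative $-(2k_2+3)!!\,\partial/\partial t_{k_2+1}$ inside $L_{k_2}$ against the transfer piece of $M_{k_1;b}$ (yielding the $-h_{k_2;\,k_1+1+|\underline{j}|-\#(\underline{j}),\,\underline{j}^c}$ contributions). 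After dividing out the common prefactor $(2k_1+3)!!/2$ and using $(2k_1+3)!!/(2k_1+1)!! = 2(k_1+\tfrac{3}{2})$, equality of the two sides of the commutation relation on this monomial becomes exactly the identity.

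The main obstacle is bookkeeping rather than any conceptual step: one must carry the automorphism factors carefully when converting ordered-subset sums (which arise naturally when commuting derivatives against the symmetric monomials $\tfrac{1}{m!}\,t_{\underline{e};\underline{a}}$ in $L_{k_2}$ and $M_{k_1;b}$) into equivalence-class sums in $\op{Power}(\underline{e},\underline{a})$, while consistently enforcing the admissibility constraints $|\underline{a}_J|+b\leq 1$ throughout. The purely-derivative piece and the quadratic-in-$\partial$ piece of $L_{k_2}$ produce no contributions of the ``mixed'' monomial type against the first-order operator $M_{k_1;b}$, so they enter only through separately straightforward matchings on derivative-only or quadratic-in-derivative terms.
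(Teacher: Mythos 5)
Your treatment of the identity itself is correct and is exactly the paper's argument: after clearing denominators --- each class $[\underline{j},a_{\underline{j}}]\in\op{Power}(\underline{e},\underline{a})$ accounts for $\#\op{Aut}(\underline{e},\underline{a})/\bigl(\#\op{Aut}(\underline{j},a_{\underline{j}})\,\#\op{Aut}((\underline{j},a_{\underline{j}})^c)\bigr)$ ordered sublists, as in Lemma~\ref{l:h} --- the statement is precisely the case $c=k_1+1$, $k=k_2$ of the inductive formula \eqref{ce:h} of Lemma~\ref{2:h}, combined with $h_{k_2;k_1+1}=(2k_1+2k_2+3)!!/(2k_1+1)!!$. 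No issue there.

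The ``moreover'' part is where the proposal has a genuine gap: you pick the right monomial $t_{\underline{e};\underline{a}}\,\partial/\partial t_{k_1+k_2+1+|\underline{e}|-\#(\underline{e});\,b+|\underline{a}|}$, but two of your three attributions are wrong, and as written the bookkeeping would not close. (i) The source you assign to the $-h_{k_2;\,k_1+1+|\underline{j}|-\#(\underline{j}),\,\underline{j}^c}$ terms, namely $-(2k_2+3)!!\,\partial/\partial t_{k_2+1}$ acting on the transfer piece of $M_{k_1;b}$, vanishes identically: $\partial/\partial t_{k_2+1}=\partial/\partial t_{k_2+1;1}$ is a weight-$1$ derivative, while the monomials $t_{\underline{e};\underline{a}}$ appearing in $M_{k_1;b}$ satisfy $|\underline{a}|+b\le 1$ with $b>0$, so no entry of $\underline{a}$ can have weight $1$. (ii) Those terms actually come from the piece you credit with producing $+h_{k_2;\underline{j}}$: the transfer derivatives $\partial/\partial t_{k_1+|\underline{e}_J|-\#(J)+1;\,|\underline{a}_J|+b}$ of $M_{k_1;b}$ acting on the linear part of $L_{k_2}$ pick out the coefficient $h_{k_2;\,(k_1+1+|\underline{e}_J|-\#(J),\,\underline{e}_{J^c})}$, not $h_{k_2;\underline{e}_J}$. (iii) The true source of the $+h_{k_2;\underline{j}}$ contributions is missing from your list: it is the linear-part derivatives $\partial/\partial t_{|\underline{e}_J|-\#(J)+1+k_2;\,|\underline{a}_J|}$ of $L_{k_2}$ hitting the $t$-coefficients of $M_{k_1;b}$'s transfer sum, entering with a plus sign from $-L_{k_2}\circ M_{k_1;b}$. (A smaller slip: only $-\partial/\partial t_{k_1+1;b}$, not $+\partial/\partial t_{k_1+1}$, contributes to this monomial, since the latter lands on weight $1+|\underline{a}|\ne b+|\underline{a}|$.) With these corrections the coefficient comparison does reduce to the displayed $h$-identity, which is how the paper proceeds.
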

\begin{proof}
For the second statement, notice that 
\[
\begin{split}
&\op{Coeff} \left(
t_{\underline{e};\underline{a}}  
\frac{\partial}{\partial t_{k_1+k_1+1 + \underline{e} - \#(\underline{e}); b+|\underline{a}|  }}
, 2M_{k_1;b} (2L_{k_2}) \right)
\\
& \qquad = - (2k_1+3)!! 
\left(
\frac{  h_{ k_2 ; k_1+1, \underline{e} }  }{ \#\op{Aut}(\underline{e}, \underline{a}) }  
+ \sum_{\phi \neq [\underline{j}, a_{\underline{j}}] \subset {\rm Power}(\underline{e}, \underline{a}) } 
\frac{h_{k_2; k_1+1 + |\underline{j}| -\#(\underline{j}) , \underline{j}^c}}{ \#\op{Aut}(\underline{j}, a_{\underline{j}}) \#\op{Aut}((\underline{j}, a_{\underline{j}})^c) } 
\right), 
\end{split} 
\]
and that
\[
\begin{split}
&\op{Coeff} \left(
t_{\underline{e};\underline{a}} 
\frac{\partial}{\partial t_{k_1+k_1+1 + \underline{e} - \#(\underline{e}); b+|\underline{a}|  }}
,  2L_{k_2} (2M_{k_1;b}) \right)
\\
& \qquad\qquad= - (2k_1+3)!! \sum_{\phi \neq [\underline{j}, a_{\underline{j}}] \subset {\rm Power}(\underline{e}, \underline{a}) } 
\frac{ h_{k_2;\underline{j}} }{ \#\op{Aut}(\underline{j}, a_{\underline{j}}) \#\op{Aut}((\underline{j}, a_{\underline{j}})^c) }.
\end{split}
\]
The identity of the coefficient of $\displaystyle t_{\underline{e};\underline{a}} 
\frac{\partial}{\partial t_{k_1+k_1+1 + \underline{e} - \#(\underline{e}); b+|\underline{a}|  }}$ of the equation 
$[ M_{k_1;b}, L_{k_2} ] = ( k_1 + \frac{3}{2} )M_{k_1+k_2;b}$ gives the identity for $h$-function described in the lemma.

To prove the first statement, we expand $h_{k_2,...}$ by definition. It suffices to prove the following equivalence relation:
\[
\begin{split}
-h_{k_2;k_1+1,\underline{e}} + \sum_{\phi \neq J \subset \{1,\dots,n\} } 
h_{k_2;k_1+1 + |\underline{e}_J| -\#(J), \underline{e}_{J^c} }
&+ \sum_{\phi \neq J \subset \{1,\dots,n\} } h_{k_2; \underline{e}_J}
\\
& = -h_{k_2;k_1+1} = - \frac{(2k_1+2k_2+3)!!}{(2k_1+1)!!},
\end{split}
\]
which is exactly the inductive definition for $h$-function introduced in Lemma~\ref{2:h}. 
This completes the proof of lemma.
\end{proof}

\section{From Virasoro to KdV hierarchy} \label{s:KdV}
We recall the Gelfand--Dickey formulation of the KdV hierarchy. 
A formal power series $U(t_0,t_1,\dots)$ is said to satisfy the KdV hierarchy if it satisfies the following systems of differential equations
\begin{equation} \label{e:4.1}
\frac{\partial U}{\partial t_i } = \frac{\partial}{\partial t_0} R_i[U], \qquad i \geq 0,
\end{equation}
where $R_i[U]$ are polynomials in $U$ and its derivatives with respect to $t_0$ and are defined recursively by
\begin{equation} \label{e:4.2}
R_0 = U, \qquad \frac{\partial R_{n+1}}{\partial t_0} = 
\frac{1}{2n+1} \Big(
\frac{\partial U}{\partial t_0} + 2 U \frac{\partial}{\partial t_0}
+ \frac{1}{4} \frac{\partial^3}{\partial t_0^3}
\Big) R_n.
\end{equation}
%\begin{theorem} \label{t:5}
Let 
\[
 \llangle  t_{i_1} \dots t_{i_r}  \rrangle := \frac{\partial^r F}{\partial t_{i_1} \dots \partial t_{i_r}}, \qquad
%\]
%\[ \displaystyle 
U = \frac{\partial^2 F}{\partial t_0^2} 
%= \frac{\partial^2}{\partial t_0^2}  \left( \sum_{n} \frac{1}{n!} \sum_{ \underline{k} \in \mathbb{N}^n } t_{k_1} \dots t_{k_n} \langle \tau_{ \underline{k}} \rangle \right)
= \llangle t_0 t_0  \rrangle,
\] 
where $F(t)$ is the generating function for weight $1$ correlators.
Then Witten's conjecture on intersection numbers on $\Mbar_{g,n}$ is equivalent to the statement that \emph{$U$ satisfies the KdV hierarchy} \cite{eW}. %formulated above. 
%\end{theorem}
This can also be formulated in the following form
%\begin{corollary}
%With the same $\llangle  t_{i_1} \dots t_{i_r}   \rrangle $ notation as before, we have
\[
\llangle t_0t_0 \rrangle \llangle t_{n+1} \rrangle = \frac{1}{2n+1}
\Big(
\llangle t_0t_0t_0 \rrangle \llangle t_0 t_n \rrangle
+ 2 \llangle t_0t_0 \rrangle \llangle t_0t_0t_n \rrangle
+ \frac{1}{4} \llangle t_0t_0t_0t_0 t_n \rrangle
\Big).
\]
%\end{corollary}

The weighted case can be treated similarly by replacing $t_k$ with $\mathbf{t}_{k;a}$ throughout.
In other words, in the Hamiltonian formulation of the KdV hierarchy, $\mathbf{t}_{k;a}$ serve as the formal time variables.

Now define $R_{n;b}[U]$, the polynomials in $U$ and its derivatives with respect to $\mathbf{t}_{0;b}$, inductively as follows
\[
R_{0;b} = U , \qquad \frac{\partial R_{n+1;b}}{\partial \mathbf{t}_{0;b}} = 
\frac{1}{2n+1} \Big(
\frac{\partial U}{\partial \mathbf{t}_{0;b}} + 2 U \frac{\partial}{\partial \mathbf{t}_{0;b}}
+ \frac{1}{4} \frac{\partial^3}{\partial \mathbf{t}_{0;b}^3}
\Big) R_{n;b}.
\]

%Now we have the weighted version for Theorem ~\ref{t:5}.
\begin{theorem} \label{t:4.1}
Let $\displaystyle U^{\A} = \frac{\partial^2 F^{\A}}{\partial \mathbf{t}_{0;b}^2} 
= \frac{\partial^2}{\partial \mathbf{t}_{0;b}^2}
\Big( \sum_{n} \frac{1}{n!} \sum_{ ( \underline{k}, \underline{a} ) \in \mathbb{N}^n \times \A^n }
t_{\underline{k}; \underline{a}} \langle  \tau_{\underline{k}; \, \underline{a}} \rangle
\Big)$. Then $U^{\A}$ satisfies the KdV hierarchy for any $b \in \A$.
More concretely, $U^{\A}$ satisfies the following system of differential equations
\[
\frac{\partial U^{\A}}{\partial \mathbf{t}_{i;b}} = \frac{\partial}{\partial \mathbf{t}_{0;b}} R_{i,b}[U^{\A}], 
\quad \mbox{ for $i \geq 0$, $b \in \A$. }
\]
\end{theorem}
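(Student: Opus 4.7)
The plan is to derive Theorem~\ref{t:4.1} from the classical Witten--Kontsevich equivalence (standard Virasoro constraints $\Leftrightarrow$ KdV hierarchy) by transporting the weighted Virasoro constraints of Theorem~\ref{t:2.13} through the change of variables $\mathbf{t}_{k;b}$ of \eqref{e:2.8}. The underlying principle is the one already anticipated in Remark~\ref{r:3.4}: since $V_{\A}$ is a semi-direct product of the usual Virasoro algebra with an abelian ideal, its highest-weight representations are controlled by the Virasoro quotient, so one expects the associated integrable hierarchy to remain the KdV hierarchy after an appropriate change of variables.

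First, I would fix $b \in \A$ and rewrite the operator $L^{\A}_{k;b}$ in the coordinates $\{\mathbf{t}_{i;b}\}_{i\ge 0}$. A direct substitution of the definition \eqref{e:2.8} of $\mathbf{v}_{k+1;b}$ into $L_k + M_{k;b}$ shows that the $\partial/\partial t_{k+1}$ contributions of $L_k$ and $M_{k;b}$ cancel, while the $\partial/\partial t_{k+1;b}$ term of $M_{k;b}$ combines with the linear ``tail'' sums of $M_{k;b}$ into exactly the vector field $\mathbf{v}_{k+1;b} = \partial/\partial \mathbf{t}_{k+1;b}$. Collecting terms, $L^{\A}_{k;b}$ assumes the shape of the classical Virasoro operator $L_k$ of Lemma~\ref{l:5} with $t_i$ replaced by $\mathbf{t}_{i;b}$, modulo contributions involving only spectator coordinates $t_{l;c}$ with $c \neq b$ whose precise form is controlled by the $h$-function identities established in Section~\ref{s:3}. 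The case $1\notin\A$ requires the replacement trick already employed in the proof of Theorem~\ref{t:2.13}, which is built into the definition of $L_{k;a}^{\A}$ stated immediately before that theorem.

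Second, since the Virasoro constraints $L^{\A}_{k;b}(\exp F^{\A})=0$ hold for all $k\ge -1$, their reformulation in the $\mathbf{t}_{\bullet;b}$ coordinates provides exactly the data required by the classical derivation of KdV from Virasoro: the $k\ge 1$ equations recursively express $\partial F^{\A}/\partial \mathbf{t}_{k+1;b}$ in terms of lower-order derivatives, while the string and dilaton equations ($k=-1,0$) together with the initial conditions of Theorem~\ref{t:3.1} --- which, by Proposition~\ref{p:1} and the unstable-correlator convention of Section~\ref{s:1.2}, coincide with the classical Witten--Kontsevich initial data $\langle\tau_1\rangle=1/24$, $\langle\tau_0^3\rangle=1$ --- pin down the base of the recursion. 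The classical argument then yields that $U^{\A} = \partial^2 F^{\A}/\partial \mathbf{t}_{0;b}^2$ satisfies the KdV recursion \eqref{e:4.1} with the polynomials $R_{i;b}[U^{\A}]$ defined by \eqref{e:4.2} with $t_0$ replaced by $\mathbf{t}_{0;b}$.

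The main obstacle will be the explicit verification in the first step: the many-variable sums of $L_k$, $M_{k;b}$, and $\mathbf{v}_{k+1;b}$ --- each indexed by ordered lists $(\underline{e},\underline{a})$ weighted by the automorphism factors from $\op{Power}(\underline{e},\underline{a})$ --- must collapse, after the change of variables, into the single-index sums characterizing the classical $L_k$ in the $\mathbf{t}_{\bullet;b}$ variables. This matching is delicate but purely algebraic, and it should reduce to the telescoping identity of Lemma~\ref{l:h} together with the further $h$-identities proved in Section~\ref{s:3}; no additional geometric input is required.
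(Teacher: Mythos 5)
Your strategy --- rewrite $L^{\A}_{k;b}$ in the $\mathbf{t}_{\bullet;b}$ coordinates until it becomes the classical Virasoro operator, then rerun the classical ``Virasoro $\Rightarrow$ KdV'' argument --- is not the paper's route, and its crux contains a genuine gap. The claimed collapse of $L^{\A}_{k;b}$ into ``the classical $L_k$ with $t_i$ replaced by $\mathbf{t}_{i;b}$, modulo contributions involving only spectator coordinates'' is not an operator identity. After the cancellation you correctly observe (the leading terms of $L_k$ and $M_{k;b}$ combine into $-(2k+3)!!\,\mathbf{v}_{k+1;b}$), the linear tail of $L_k$ still consists of terms $h_{k;\underline{e}}\,t_{\underline{e};\underline{a}}\,\partial/\partial t_{|\underline{e}|-n+1+k;\,|\underline{a}|}$ whose derivative directions carry weights $|\underline{a}|\neq 1$; these are first-order operators in \emph{spectator} directions, not multiplication operators, and they cannot be absorbed into $\partial/\partial\mathbf{t}_{m;b}$. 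The best one can say is that $L^{\A}_{k;b}$ and the classical operator in the $\mathbf{t}_{\bullet;b}$ variables agree \emph{when applied to} $e^{F^{\A}}$, and that equality already uses the extra linear relations $M_{k;c}F^{\A}=0$ --- so using the ``collapsed'' form to derive new consequences for $F^{\A}$ is circular as set up. A secondary issue: even granting classical Virasoro constraints in the $\mathbf{t}_{\bullet;b}$ directions, concluding KdV requires exhibiting a KdV tau function with the same constraints and the same dependence on the spectator variables, which again forces you back to identifying $F^{\A}$ with the Witten--Kontsevich potential in the new coordinates.

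The ingredient your proposal never invokes is the one that makes the paper's proof essentially immediate: Corollary~\ref{c:1} in the form \eqref{e:2.7}, $M_{k;b}F^{\bar{\A}}=0$, i.e.\ $\partial F^{\bar{\A}}/\partial t_{k+1}=\partial F^{\bar{\A}}/\partial\mathbf{t}_{k+1;b}$ for every $b$. The KdV hierarchy in its Gelfand--Dickey/DVV form is a system of differential-polynomial identities among derivatives of $F$; since it holds for the weight-$1$ time derivatives by Witten--Kontsevich, and since every weight-$1$ derivative of $F^{\bar{\A}}$ equals the corresponding $\mathbf{t}_{\bullet;b}$-derivative, the same identities hold verbatim with $t_i$ replaced by $\mathbf{t}_{i;b}$. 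The case $1\notin\A$ is then handled, as in Theorem~\ref{t:2.13}, by passing to $\bar{\A}=\A\cup\{1\}$ and restricting to $\{t_{l;1}=0\}$, which is harmless because $\partial/\partial\mathbf{t}_{k;b}$ involves no weight-$1$ derivatives. I would rewrite your argument around this identity and drop the operator-level collapse entirely.
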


\begin{proof}
We play the same trick as in the proof of Theorem~\ref{t:2.13}. 

For the case $1 \in \A$, Notice that differentiate $F^{\A}$ with respect to $t_i$ and $\mathbf{t}_{i;b}$ are identical. Hence $U^{\A}$ satisfies the KdV hierarchy with respect to the formal time variables $\mathbf{t}_{k;a}$.

In case $1 \notin \A$, one can first extend the setup to $\bar{\A} = \A \cup \{1 \}$ and then restrict to the subspace $\{t_{i;1}=0\}_{i \in \mathbb{N}}$. 
Since $\partial_{\mathbf{t}_{k;b}}$ does not involve the derivative with respect to weight $1$ variables, the restriction respsects all the above arguments.
This completes the proof.
\end{proof}

%\begin{remark}
Some remarks are in order.
The original Witten's conjecture is stated in two parts \cite{eW}.
The first part of the conjecture states that $U(t) := \partial^2 F(t) / \partial t_0^2$ satisfies the KdV hierarchy;
the second part states that $F(t)$ obeys the string equation $L_{-1} F(t) =0$.
There, the \emph{initial condition} $U(t) |_{t_{\geq 1} =0} = t_0$ and the equation \eqref{e:4.1} and \eqref{e:4.2} from the KdV hierarchy uniquely determine $U(t)$ in the obvious way. %and with more efforts $F(t)$.
In the weighted case, the ``string equation'' $L^{\A}_{-1;a} F^{\A} =0$ appears in a modified form.
Theorem~\ref{t:4.1} confirms that $U^{\A}$ also satisfies the KdV hierarchy, with the same set of equations in a different coordinate system $\{ \mathbf{t}_{k;a} \}$.
One can ask what the corresponding initial condition $U^{\A} (\mathbf{t}) |_{\mathbf{t}_{\geq 1; b} =0}$ is in this new coordinate system.
(This was also brought up in a correspondence with Youjin Zhang.) Our answer is given below.

We deinfe variables $\{ \mathbf{t}_{k;b} \}_{k \geq 0,\ b \in \mathcal{A}}$ as follows:
\[
\mathbf{t}_{k;b} := 
\sum_{m \geq 1} 
\frac{(-1)^{m-1}}{m!} 
\sum_{\substack
{(\underline{e}, \underline{a}) \in (\mathbb{N} \times \mathcal{A})^m
\\
|\underline{a}|=b,\ | \underline{e}|-m+1 = k
}
}
t_{ \underline{e}; \underline{a} }.
\]
This change of varieables is invertible:
\begin{lemma}
With $\{ \mathbf{t}_{k;b} \}_{k \geq 0,\ b \in \mathcal{A}}$ defined above, we have
\[
t_{k;b} = \sum_{n \geq 1} \frac{1}{n!}  
\sum_{\substack
{(\underline{e}; \underline{a}) \in (\mathbb{N} \times \mathcal{A})^n
\\
|\underline{a}|=b,\ | \underline{e}|-n+1 = k
}
}
\mathbf{t}_{ \underline{e}; \underline{a} }.
\]
\end{lemma}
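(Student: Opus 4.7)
The plan is to encode both sides as coefficients of two-variable formal series and reduce the identity to the compositional inversion of a one-variable power series.

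First I would introduce a weight-tracking variable $y$ with multiplication $y^{a_1}y^{a_2}=y^{a_1+a_2}$ when $a_1+a_2\le 1$ and $y^{a_1}y^{a_2}=0$ otherwise; this rule is associative precisely because $\A$ is additively closed in $[0^+,1]$. Working in the formal series ring over the resulting truncated monoid algebra $R$, set
\[
B(z,y):=\sum_{(e,a)\in\mathbb{N}\times\A} t_{e;a}\,z^{e-1}y^{a},\qquad
\mathbf{B}(z,y):=\sum_{(e,a)\in\mathbb{N}\times\A} \mathbf{t}_{e;a}\,z^{e-1}y^{a}.
\]
Expanding directly,
\[
B(z,y)^{m}=\sum_{(\underline{e},\underline{a})\in(\mathbb{N}\times\A)^{m}}\Big(\prod_{j=1}^{m} t_{e_j;a_j}\Big)\,z^{|\underline{e}|-m}y^{|\underline{a}|},
\]
with the $y$-truncation automatically enforcing $|\underline{a}|\le 1$. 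Extracting $[z^{k-1}y^{b}]$ and comparing with the definition of $\mathbf{t}_{k;b}$ shows that the defining formula is equivalent to the single identity $\mathbf{B}(z,y)=\phi(B(z,y))$, where $\phi(w)\in w\,\mathbb{C}[[w]]$ is the scalar series with $\phi'(0)=1$ dictated by the coefficients $\tfrac{(-1)^{m-1}}{m!}$.

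Next I would apply formal compositional inversion: since $\phi'(0)=1$, there is a unique $\psi(u)\in u\,\mathbb{C}[[u]]$ with $\psi\circ\phi=\mathrm{id}$, and substituting yields $B(z,y)=\psi(\mathbf{B}(z,y))$. Expanding $\mathbf{B}^{n}$ in exactly the same way and extracting $[z^{k-1}y^{b}]$ produces a sum of precisely the asserted shape, with each length-$n$ term weighted by $[u^{n}]\psi$. The claimed identity then reduces to matching $[u^{n}]\psi$ with the weight $\tfrac{1}{n!}$ given in the statement.

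The main obstacle I anticipate is this final scalar coefficient match --- identifying $\phi$ and its compositional inverse $\psi$ explicitly and reading off $[u^n]\psi$ --- together with verifying that the $y$-truncation is genuinely preserved when raising $B$ to arbitrary powers and when substituting one formal series into another (which in turn relies essentially on $\A$ being additively closed in $[0^+,1]$). Both checks are bookkeeping-heavy but routine; once in place the lemma follows directly from $B=\psi(\mathbf{B})$.
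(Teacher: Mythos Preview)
Your generating-function setup has a genuine flaw: the series $B(z,y)$ has $z$-degree $\ge -1$, but $B^{m}$ has $z$-degree $\ge -m$, so $\phi(B)=\sum_{m\ge 1}\tfrac{(-1)^{m-1}}{m!}B^{m}$ has nonzero coefficients in \emph{all} negative $z$-degrees. Concretely $[z^{-2}y^{b}]\phi(B)=-\tfrac12 t_{0;a_1}t_{0;a_2}+\cdots\neq 0$. Hence your $\mathbf{B}(z,y)=\sum_{e\ge 0}\mathbf{t}_{e;a}z^{e-1}y^{a}$ is only the $z$-degree $\ge -1$ \emph{truncation} of $\phi(B)$, not $\phi(B)$ itself. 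The equality ``$\mathbf{B}=\phi(B)$'' that you use as the hinge of the argument is therefore false, and you cannot deduce $B=\psi(\mathbf{B})$ from $\psi\circ\phi=\mathrm{id}$.

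A second, independent problem is the scalar coefficient match you flag as ``routine''. From the definition one reads off $\phi(w)=\sum_{m\ge 1}\tfrac{(-1)^{m-1}}{m!}w^{m}=1-e^{-w}$, whose compositional inverse is $\psi(u)=-\log(1-u)=\sum_{n\ge 1}u^{n}/n$. Thus $[u^{n}]\psi=1/n$, not $1/n!$, so the match you need simply fails. In fact, for weights with $a_1+a_2\le 1$ one can write down three-term monomials (e.g.\ the coefficient of $t_{0;a_1}t_{1;a_1}^{2}$ in the claimed expression for $t_{0;2a_1}$) that do not cancel, so the lemma as stated does not hold in full generality; the $y$-truncation can rescue both issues only when it forces $B^{m}=0$ for $m$ large (e.g.\ $\A=\{1/2,1\}$, where $B^{3}=0$ and $1/n$ versus $1/n!$ happens to agree for $n\le 2$). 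Any correct argument must exploit this weight-nilpotency explicitly rather than treat it as bookkeeping. The paper itself does not supply a proof of this lemma; the operative statement in the paper is the subsequent duality lemma, which is checked directly.
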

Moreover $\{ \mathbf{t}_{k,b} \}$ forms a  coordinate system corresponds to vector fields $\{ \frac{\partial}{\partial \mathbf{t}_{k,b}} \}$. Explicitly, we have
\begin{lemma}
With $\{ \mathbf{t}_{k;b} \}_{k \geq 0,\ b \in \mathcal{A}}$ defined above, we have
\[
\frac{\partial}{\partial \mathbf{t}_{k_1; b_1}}( \mathbf{t}_{k_2;b_2} ) =
\begin{cases}
1, \mbox{ if $(k_1,b_1) = (k_2,b_2)$};
\\
0, \mbox{ otherwise}.
\end{cases}
\]
\end{lemma}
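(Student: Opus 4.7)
The plan is to prove the identity by direct computation. I would split the operator
\[
\mathbf{v}_{k_1;b_1} \;=\; \frac{\partial}{\partial t_{k_1;b_1}} \;+\; \mathbf{w}_{k_1;b_1},
\]
where $\mathbf{w}_{k_1;b_1}$ denotes the $m\ge 1$ correction appearing in \eqref{e:2.8}, and apply each piece separately to the defining sum of $\mathbf{t}_{k_2;b_2}$.

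For the first piece I would apply $\partial/\partial t_{k_1;b_1}$ term-by-term via Leibniz. The $M=1$ term of $\mathbf{t}_{k_2;b_2}$ contributes the Kronecker delta $\delta_{(k_1,b_1),(k_2,b_2)}$ directly. For each $M \ge 2$, the symmetry of the sum over ordered tuples $(\underline{E},\underline{A})$ lets me fix one position to be $(k_1,b_1)$ and multiply by $M$; after reindexing $N := M-1$ and comparing with the definition of $\mathbf{t}_{k;b}$, the resulting sum is exactly $-\mathbf{t}_{k_2-k_1+1;\,b_2-b_1}$.

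For the second piece, applying $\mathbf{w}_{k_1;b_1}$ to $\mathbf{t}_{k_2;b_2}$ produces a double sum indexed by $(m,M)$. After Leibniz and the same symmetry argument, the expression reorganizes into a sum over combined ordered tuples of length $L = (M-1)+m$, with the derivative-index constraints converting into $|\underline{A}|=b_2-b_1$ and $|\underline{E}|-L+1 = k_2-k_1+1$. Swapping the order of summation and fixing $L$, the internal coefficient simplifies via the binomial identity
\[
\sum_{i=0}^{L-1}\binom{L}{i}(-1)^i \;=\; (-1)^{L-1}
\]
(a consequence of $(1-1)^L = 0$) to precisely $+\mathbf{t}_{k_2-k_1+1;\,b_2-b_1}$. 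Adding the two pieces, the $\mathbf{t}_{k_2-k_1+1;\,b_2-b_1}$ terms cancel and only the Kronecker delta survives, which is the claim.

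The main obstacle will be the combinatorial reorganization of the second piece: one must recognize that the concatenation of ordered tuples of lengths $M-1$ and $m$ identifies the iterated sum $\sum_{(\underline{E}',\underline{A}')}\sum_{(\underline{e},\underline{a})} t_{\underline{E}';\,\underline{A}'}\, t_{\underline{e};\,\underline{a}}$ with a single sum over ordered $L$-tuples, so that the $(m,M)$ contributions with fixed $L$ can be collected cleanly and the binomial identity applied. The additively closed assumption on $\mathcal{A}$ ensures that all intermediate weight sums stay in $[0^+,1]$ and hence in the phase space, so no term escapes the allowed range.
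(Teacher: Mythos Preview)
Your computation is correct; the binomial identity you invoke is precisely the engine that makes the cancellation work, and the concatenation-of-ordered-tuples step is legitimate because the only constraints on the two tuples are on their total weight and index sums, which combine additively. The convention that $\mathbf{t}_{k;b}$ is the empty sum (hence zero) when $b\le 0$ or $b\notin\mathcal{A}$ is what makes your argument uniform across all weight comparisons.

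The paper's proof reaches the same conclusion by a slightly different organization. Rather than splitting the operator $\mathbf{v}_{k_1;b_1}$ into leading term plus correction and recognizing each piece as $\pm\mathbf{t}_{k_2-k_1+1;\,b_2-b_1}$, the paper cases on the sign of $b_2-b_1$: for $b_1>b_2$ it observes that no monomial of negative total weight can appear; for $b_1=b_2$ only the leading term contributes and the computation is trivial; for $b_1<b_2$ it fixes a target monomial $t_{\underline{e};\underline{a}}$ and shows its coefficient is an alternating sum over subsets $I\subset\{1,\dots,n\}$ that vanishes. Your global repackaging has the advantage of treating all three cases at once and of naming the intermediate quantity, which makes the cancellation transparent; the paper's monomial-by-monomial argument is shorter to state and avoids having to track the shifted $\mathbf{t}$ explicitly. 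Both rest on the same binomial identity.
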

\begin{proof}
First we notice that $\frac{\partial}{\partial \mathbf{t}_{k_1; b_1}}( \mathbf{t}_{k_2;b_2} )$ will be a monomial with weight $b_2-b_1$.
Now we divide the computation in three cases:
\begin{enumerate}
\item If $b_1 > b_2$, then $\frac{\partial}{\partial \mathbf{t}_{k_1; b_1}}( \mathbf{t}_{k_2;b_2} )$ should be zero since we don't have negative weight monomial.
\item If $b_1=b_2=b$, then
\[
\frac{\partial}{\partial \mathbf{t}_{k_1; b}}( \mathbf{t}_{k_2;b} ) = \frac{\partial}{\partial t_{k_1;b}} (t_{k_2;b}) = \delta_{k_1,k_2}.
\]
\item If $b_1<b_2$, given $(\underline{e}; \underline{a}) \in (\mathbb{N} \times \mathcal{A})^n$ with $|\underline{a}| = b_2-b_1$ and $\underline{e} = \# (\underline{e}) + k_2-k_1$, then
\[
{\rm Coeff}\Big(
t_{\underline{e} ; \underline{a}}, \frac{\partial}{\partial \mathbf{t}_{k_1;b_1} }(\mathbf{t}_{k_2;b_2}) 
\Big)
= \sum_{I \subset \{ 1,2,\dots,n \}} 
\frac{1}{ {\rm Aut} (t_{\underline{e}_I; \underline{a}_I}) }
\frac{ (-1)^{\#(I^c)} }{ {\rm Aut} (t_{\underline{e}_{I^c}; \underline{a}_{I^c}}) }
=0.
\]
\end{enumerate}
Combine all three cases, we prove the lemma.
\end{proof}
\begin{corollary}
Let $\mathbf{t}_k := \sum_{a\in \mathcal{A}} \mathbf{t}_{k;a}$. Then we have
\[
U^{\mathcal{A}} = U^{\mathcal{A}}( \{ t_{k;a} \}_{k\in \mathbb{N}, a \in \mathcal{A}} ) = U^{1}(\mathbf{t}_0,\mathbf{t}_1,\dots),
\]
where $ \displaystyle U^1(t_0,t_1,\dots) = \frac{\partial^2}{\partial t_0^2} \Big( \sum_{n \geq 1} \frac{1}{n!} \sum_{\underline{k} \in \mathbb{N}^n} t_{\underline{k}} \langle \tau_{\underline{k}} \rangle \Big) $ is the potential corresponds to the unweighted case.
\end{corollary}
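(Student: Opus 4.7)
The strategy is to prove the stronger identity $F^{\mathcal{A}}(t) = F(\mathbf{t}_0, \mathbf{t}_1, \ldots)$ at the level of full generating functions; the corollary then follows immediately by the chain rule, since the preceding Lemma and the definition $\mathbf{t}_k = \sum_{a \in \mathcal{A}} \mathbf{t}_{k;a}$ together give $\partial \mathbf{t}_k / \partial \mathbf{t}_{0;b} = \delta_{k,0}$, so that
\[
U^{\mathcal{A}} = \frac{\partial^2 F^{\mathcal{A}}}{\partial \mathbf{t}_{0;b}^2} = \frac{\partial^2 F(\mathbf{t})}{\partial \mathbf{t}_{0;b}^2} = \frac{\partial^2 F}{\partial t_0^2}(\mathbf{t}_0, \mathbf{t}_1, \ldots) = U^1(\mathbf{t}_0, \mathbf{t}_1, \ldots).
\]

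To establish the full identity, apply Proposition~\ref{p:1} to each weighted correlator in the definition of $F^{\mathcal{A}}$:
\[
F^{\mathcal{A}}(t) = \sum_{n \geq 0} \frac{1}{n!} \sum_{(\underline{k}, \underline{a}) \in (\mathbb{N} \times \mathcal{A})^n} t_{\underline{k}; \underline{a}} \sum_{\pi} (-1)^{n - |\pi|} \Big\langle \prod_{S \in \pi} \tau_{K_S} \Big\rangle,
\]
where $\pi$ ranges over admissible set-partitions of $\{1, \ldots, n\}$ and $K_S := \sum_{j \in S} k_j - |S| + 1$. Three features make this sum amenable to the exponential formula: the sign factors as $(-1)^{n - |\pi|} = \prod_{S \in \pi}(-1)^{|S|-1}$; the admissibility condition $|\underline{a}_S| \leq 1$ is per-part (and automatically places the part-weight inside $\mathcal{A}$, since $\mathcal{A}$ is additively closed); and the joint unweighted correlator depends only on the multiset $\{K_S\}_{S \in \pi}$. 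Grouping set partitions by their number of parts $\rho$ together with an ordering, and by an ordered tuple of part types $(K_i, b_i)_{i=1}^\rho \in \mathbb{N} \times \mathcal{A}$, the standard multinomial rearrangement yields
\[
F^{\mathcal{A}}(t) = \sum_{\rho \geq 0} \frac{1}{\rho!} \sum_{(K_i, b_i)_{i=1}^\rho} \Big\langle \tau_{K_1} \cdots \tau_{K_\rho} \Big\rangle \prod_{i=1}^{\rho} \mathbf{t}_{K_i; b_i},
\]
where the per-part factor matches exactly the defining formula of $\mathbf{t}_{K; b}$ stated just before this corollary. Summing each $\mathbf{t}_{K_i; b_i}$ over $b_i \in \mathcal{A}$ replaces it by $\mathbf{t}_{K_i}$, producing $F(\mathbf{t}_0, \mathbf{t}_1, \ldots)$.

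The heart of the argument is the exponential-formula reorganization in the second paragraph; its verification amounts to combinatorial bookkeeping, namely matching $\tfrac{1}{n!} \binom{n}{l_1, \ldots, l_\rho} = \prod_i \tfrac{1}{l_i!}$ with the $1/\rho!$ from part-orderings and the $1/l_i!$ absorbed into $\mathbf{t}_{K_i; b_i}$. Once this is in place, the remaining manipulations---summation over the weight labels $b_i$ and the chain-rule computation of the second paragraph---are formal, so the combinatorial rearrangement is the only genuine technical step.
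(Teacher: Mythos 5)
Your proposal is correct, but it takes a genuinely different route from the paper. The paper deduces the corollary from Theorem~\ref{t:4.1}: both $U^{\A}$ (in the $\mathbf{t}_{k;b}$ variables) and $U^1$ satisfy the same KdV hierarchy, and both restrict to $\mathbf{t}_0$ when the higher times are set to zero, so they coincide by the uniqueness of the KdV solution determined by its initial condition. You instead prove the stronger identity $F^{\A}(t) = F(\mathbf{t}_0,\mathbf{t}_1,\dots)$ of full potentials, purely combinatorially from Proposition~\ref{p:1}: the sign $(-1)^{n-\rho}$ factors over parts, admissibility is a per-part condition whose weight lands in $\A$ by additive closure, the unweighted correlator depends only on the multiset of part-indices $K_S$, and the exponential-formula bookkeeping $\tfrac{1}{n!}\binom{n}{l_1,\dots,l_\rho}=\prod_i\tfrac{1}{l_i!}$ turns the per-part sum into exactly the paper's explicit formula for $\mathbf{t}_{K;b}$ (terms with $K_S<0$ drop out by the vanishing convention). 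The corollary then follows from the lemma $\partial\mathbf{t}_{k_2;b_2}/\partial\mathbf{t}_{k_1;b_1}=\delta$ and the chain rule. Your argument buys more: it needs neither Theorem~\ref{t:4.1} nor Witten's theorem, it determines $F^{\A}$ itself rather than only its second $\mathbf{t}_{0;b}$-derivative, and combined with Witten's theorem it would reprove Theorem~\ref{t:4.1}. The paper's argument is shorter given the machinery already in place and also implicitly supplies the initial-condition computation $U^{\A}\vert_{\mathbf{t}_{i>0;a}=0}=\mathbf{t}_0$, which your route gets for free from the potential identity.
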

\begin{proof}
With the above change of varieables, we can view $U^{\mathcal{A}}$ as formal power series of $\{ \mathbf{t}_{k;a} \}_{k\in \mathbb{N}, a \in \mathcal{A}}$. Then we notice that both side satisfies the KdV hierarchy described in Theorem \ref{t:4.1} and have the same initial condition:
\[
U^{\mathcal{A}}\bigl\vert_{\mathbf{t}_{i>0;a}=0} = U^1(\mathbf{t}_0,\dots) \bigl\vert_{\mathbf{t}_{i>0}=0} = \mathbf{t}_0 .
\]
\end{proof}


\begin{thebibliography}{99}

\bibitem[AG]{AG}
V.~Alexeev, G.\ M.\ Guy;
\textit{Moduli of weighted stable maps and their gravitational descendants}, J.\ Inst.\ Math.\ Jussieu 7 (2008), no.\ 3, 425-456. 

\bibitem[BC]{BC}
V.\ Blankers, R.\ Cavalieri;
\textit{Wall-crossings for Hassett descendant potentials}, arXiv:1907.06277.

\bibitem[CK]{CK}
S.-J.\ Cheng, V.\ Kac; 
\textit{Conformal module},. Asian J.\ Math.\ 1 (1997), no.\ 1, 181-193.
		
\bibitem[bH]{bH} B.\ Hassett;
	\textit{Moduli spaces of weighted pointed stable curves}, Adv.\ Math.\ 173 (2003), no.\ 2, 316-352.
	
\bibitem[eW]{eW} E.\ Witten; 
\textit{Two-dimensional gravity and intersection theory on moduli space}. Surveys in differential geometry (Cambridge, MA, 1990), 243-310, Lehigh Univ., Bethlehem, PA, 1991.
	
\end{thebibliography}
\end{document}